\newcommand{\ra}{\rangle}
\newcommand{\la}{\langle}
\newcommand{\F}{{\mathbb F}}
\newcommand{\bQ}{{\textbf Q}}
\newcommand{\cP}{{\mathcal P}}
\newcommand{\cS}{{\mathcal S}}
\newcommand{\cD}{{\mathcal D}}
\newcommand{\cL}{{\mathcal L}}
\newcommand{\bR}{{\mathbb R}}
\newcommand{\GL}{\textup{GL}}
\newcommand{\GamL}{\Gamma\textup{L}}
\newcommand{\Sp}{\textup{Sp}}
\newcommand{\lcm}{\textup{lcm}}
\newtheorem{thm}{Theorem}
\newtheorem{lemma}[thm]{Lemma}
\newtheorem{corollary}[thm]{Corollary}
\newtheorem{proposition}[thm]{Proposition}
\numberwithin{equation}{section}
\numberwithin{thm}{section}
\newtheorem{remark}[thm]{Remark}
\begin{document}
\title{On finite generalized quadrangles of even order}

\author[Tao Feng]{Tao Feng}
\thanks{Tao Feng is with School of Mathematical Sciences, Zhejiang University, Hangzhou 310027, Zhejiang, P.R. China, (e-mail: tfeng@zju.edu.cn).}
\maketitle

\begin{abstract}
In this paper, we establish the following two results: (1) a skew translation generalized quadrangle of even order is a translation generalized quadrangle, (2) a generalized quadrangle of even order does not admit a point regular group of automorphisms. The first result confirms a conjecture of Payne (1975) based on earlier work of Ott (2021), and the second result confirms a conjecture of Ghinelli (1992).
\end{abstract}

\section{Introduction}

A generalized quadrangle $\cS$ of order $(s,t)$ is a finite point-line incidence structure such that each line has $s+1$ points, each point is on $t+1$ lines, and for each point-line pair $(P,\ell)$ that is not incident there is a unique point collinear with $P$ on $\ell$.   If $s=t$, we say that it has order $s$. The quadrangle $\cS$ is said to be thick if $\min\{s,t\}\ge 2$. An automorphism of $\cS$ consists of a permutation of the points and a permutation of lines that preserve the incidence. For a given point $P$, an elation about $P$ is an automorphism $\theta$ that is either an identity, or fixes each line through $P$ and no point not collinear with it. If further $\theta$ fixes each point collinear with $P$, then it is a symmetry about $P$. We refer the reader to \cite{FGQ} for basics on generalized quadrangles and \cite{sym} for a comprehensive study from the viewpoint of symmetries.

Suppose that $G$ is a group of automorphisms of a generalized quadrangle $\cS$. If $G$ consists of elations about a point $P$ and acts regularly on the points not collinear with $P$, then $\cS$ is an elation generalized quadrangle with base point  $P$ and elation group $G$. If the elation group $G$ contains a subgroup consisting of $t$ symmetries about $P$, then $\cS$ is a skew translation generalized quadrangle. If an elation quadrangle $\cS$ has an abelian elation group, then it is a translation generalized quadrangle. We refer to \cite{kantor} for a description  of elation generalized quadrangles using coset geometries, and refer to \cite{central,tgq} for more on skew-translation and translation quadrangles.

The classification of finite generalized quadrangles that satisfy certain transitivity assumptions has a long history. In recent years, there has been significant progress towards Kantor's 1991 conjecture \cite{kt91} that a finite flag-transitive generalized quadrangle is either classical or has order $(3,5)$ or $(15,17)$ up to duality, cf. \cite{antiflag,anti2}. The generalized quadrangles with a group of automorphisms acting primitively on points have also attracted much attention, cf. \cite{bppprim} and the references therein. In the O'Nan-Scott Theorem as stated in \cite[Section~6]{perm}, there are several types of  primitive permutation group actions that have a regular subgroup.

In this paper, we are concerned with skew-translation generalized quadrangles and quadrangles with a point regular group of automorphisms. The symplectic quadrangle $W(q)$ whose points and lines are the totally singular points and lines respectively of the rank $2$ symplectic polar space $W(3,q)$ is a skew translation generalized quadrangle, and it is a translation generalized quadrangle when $q$ is even. Conversely, a skew-translation generalized quadrangle of odd order $q$ must be the symplectic quadrangle $W(q)$. This follows by combining the results in Yoshiara's 2007 work \cite{Yreg} and Ghinelli's 2012 work \cite{DG}, and is first observed by Bamberg, Glasby and Swartz \cite{bgwq8} who also corrected an error in \cite{DG}. K. Thas \cite{central} also has an independent proof of this fundamental characterization. In the even order case, Payne \cite{Payne75} made in 1975 the conjecture that a skew translation generalized quadrangle of even order $s$ must be a translation generalized quadrangle. The order $s$ of a skew translation generalized quadrangle  is a power of $2$ and the elation group $G$ has order  $s^3$, cf. \cite{chen,Hachen}. In 2021, Ott \cite{ott} made major progress and confirmed the conjecture in the case where $s$ is an odd power of $2$. In  the preprint \cite{KtPre}, K. Thas gave a short geometrical proof of Ott's result.

The study of finite generalized quadrangles with a regular group of automorphisms was initiated by Ghinelli \cite{Greg} in 1992, and we refer to \cite{BGreg,SKreg,feng,Swreg,Yreg} for research on this topic.  By determining all the point regular groups of automorphisms of the Payne derived quadrangle of the symplectic quadrangle $W(q)$, $q$ odd, we \cite{feng} showed that the finite groups that act regularly on the points of a finite generalized quadrangle can have unbounded nilpotency classes. This is an indication of difficulty in the study of such quadrangles. In Ghinelli's original work \cite{Greg}, she made the conjecture that a generalized quadrangle of even order does not admit a regular group of automorphisms.  In \cite{Swreg}, Swartz made the milder conjecture that a thick generalized quadrangle of order $s$ can not have a group of automorphisms acting regularly on both the point set and the line set. By Theorem 1.3 of \cite{Swreg}, a counterexample would have even order. There has been little progress on both conjectures.

In this paper, we confirm both Payne's 1975 conjecture and Ghinelli's 1992 conjecture by introducing new techniques from character theory and extremal graph theory. Swartz's conjecture in \cite{Swreg} then follows as a corollary. To be specific, we prove the following results.
\begin{thm}\label{thm_STGQ}
A skew translation generalized quadrangles of even order is a translation generalized quadrangle.
\end{thm}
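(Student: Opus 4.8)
Let $\cS$ be a skew translation generalized quadrangle of even order $s=2^{n}$, with base point $P$, elation group $G$ of order $s^{3}$, and group of symmetries $S\le G$ about $P$. It is known (see \cite{chen,Hachen} and the references therein) that $S$ is elementary abelian of order $s$ and central in $G$, that the commutator subgroup satisfies $G'\le S$, and that $G/S$ is elementary abelian of order $s^{2}$; thus $G$ is a $2$-group of nilpotency class at most $2$ with $G'\le S\le Z(G)$. Since "$\cS$ is a translation generalized quadrangle'' means precisely that the elation group is abelian, the entire problem reduces to proving $G'=1$.

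First I would pass to the coset-geometry model. Write the Kantor family as $\{A_{0},\dots,A_{s}\}$ and $\{A_{0}^{*},\dots,A_{s}^{*}\}$ with $|A_{i}|=s$, $|A_{i}^{*}|=s^{2}$ and $A_{i}\le A_{i}^{*}$; the axioms give $A_{i}\cap A_{j}=1$ for $i\ne j$, and the $s^{3}$ points of $\cS$ not collinear with $P$ are identified with $G$ (on which $G$ acts regularly) so that two such points $g,h$ are collinear exactly when $gh^{-1}\in D:=\bigcup_{i}(A_{i}\setminus\{1\})$. Hence collinearity off $P$ is the Cayley graph $\mathrm{Cay}(G,D)$ with $|D|=(s+1)(s-1)=s^{2}-1$. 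This is an induced subgraph of the point graph of $\cS$, which is strongly regular with eigenvalues $s^{2}+s$, $s-1$, $-s-1$ of multiplicities $1$, $\tfrac{s(s+1)^{2}}{2}$, $\tfrac{s(s^{2}+1)}{2}$; crucially its eigenspaces are $G$-modules. The spectrum of $\mathrm{Cay}(G,D)$ itself is computed representation-theoretically: for an irreducible representation $\rho$ of $G$ on $V_{\rho}$, letting $E_{i}$ be the orthogonal projection onto the $A_{i}$-fixed subspace, one has $\sum_{a\in A_{i}}\rho(a)=s\,E_{i}$, whence $\rho(D)=s\sum_{i=0}^{s}E_{i}-(s+1)I$. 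So every eigenvalue is $s\mu-(s+1)$ with $\mu\in\mathrm{spec}\big(\sum_{i}E_{i}\big)$; for linear $\rho$ this is the familiar $ms-(s+1)$ with $m=\#\{i:A_{i}\le\ker\rho\}$.

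The nonabelian case is detected by central characters: $G'\ne1$ forces a linear character $\lambda$ of $S$ with $\lambda|_{G'}\ne1$, and the irreducible characters over such a $\lambda$ are genuinely nonlinear, of degree $2^{r}$ where $2r$ is the rank of the alternating $\F_{2}$-form $f_{\lambda}(xS,yS)=\lambda([x,y])$ on $G/S\cong\F_{2}^{2n}$ (well defined because $G'\le S$). My plan is to show these nonlinear characters are incompatible with the strongly regular geometry. I would (i) use the Kantor axioms together with the centrality of $S$ and the symmetry property to pin down the fixed dimensions $\dim V_{\rho}^{A_{i}}=\tfrac1s\sum_{a\in A_{i}}\chi_{\rho}(a)$, and hence the trace and extreme eigenvalues of $\sum_{i}E_{i}$; and (ii) feed the resulting eigenvalues $s\mu-(s+1)$, with multiplicities $d_{\rho}\cdot(\text{mult of }\mu)$, into the constraints that the spectrum of the subconstituent $\mathrm{Cay}(G,D)$ must interlace that of the ambient strongly regular graph and that the $s-1$ and $-s-1$ multiplicities above are exactly met. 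The decisive input is an extremal-graph-theoretic inequality: a Hoffman/ratio-type bound applied to $\mathrm{Cay}(G,D)$, or to the induced subgraph singled out by the support of a nonlinear $\rho$, forces a clique/coclique count or an eigenvalue that is impossible unless every form $f_{\lambda}$ vanishes, i.e. unless $G'=1$.

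I expect the main obstacle to be precisely the control of $\mathrm{spec}\big(\sum_{i}E_{i}\big)$ for the nonlinear representations. For linear characters the operator is diagonal and transparent, but when $\deg\rho=2^{r}>1$ the projections $E_{0},\dots,E_{s}$ need not commute, so the Cayley spectrum is no longer a list of character sums and one must instead bound the extreme eigenvalues of a sum of $s+1$ non-commuting projections. Making this bound sharp enough, uniformly in $n$ and in particular across the even-power case $s=2^{2m}$ left open after Ott \cite{ott}, is where the interplay between the commutator form $f_{\lambda}$ and the extremal spectral estimate has to be pushed hardest, and it is the step on which the whole argument turns.
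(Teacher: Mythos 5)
Your proposal has two genuine gaps. First, the ``known facts'' you start from --- that the symmetry group $S$ (which is $U_0=\bigcap_i A_i^*$ in the paper's notation) is elementary abelian and central in $G$ --- are not known for even order; they are essentially the content of the theorem. What is actually known (\cite{chen,Hachen}, \cite[8.2.2]{FGQ}) is that $|U_0|=s$, that $U_0$ is normal, and that $G/U_0$ is elementary abelian, whence $G'\le U_0$. Ott's reduction, which the paper follows, consists precisely of proving $G'=[G,U_0]\Phi(U_0)$, then $G'=[G,U_0]$, then $G'=\Phi(U_0)$, then $G'=1$; if $U_0$ were already known to be central and elementary abelian, the first of these steps would give $G'=1$ immediately. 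Moreover, in the hypothetical counterexample that the paper eliminates, the quotient $\overline{U_0}=U_0/M$ is cyclic of order $4$, so $U_0$ is not elementary abelian there. Your premise therefore assumes a substantial part of what must be proved.

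Second, the decisive step of your plan is missing, and the tool you nominate for it cannot supply it. By Lemma \ref{lem_4dim_Del} and Proposition \ref{prop_nonchi_ST}, in every nonprincipal irreducible representation $\psi$ the element $\Delta=\sum_i A_i^\#$ satisfies $(X-s+1)(X+t+1)(X+t+1-s)=0$, so the spectrum of your Cayley graph $\mathrm{Cay}(G,D)$ is contained in $\{s^2-1,\,s-1,\,-1,\,-s-1\}$ for \emph{every} $4$-gonal family, abelian or not; your worry about sums of non-commuting projections is already resolved by these relations, but for the same reason interlacing against the strongly regular point graph and Hoffman/ratio bounds cannot distinguish the abelian from the nonabelian case. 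The obstruction lives entirely in the multiplicities. The paper's mechanism is to show that the class functions $\chi_S,\chi_T$ built from $\sum_i(sA_i+A_i^*)$ and $\sum_i(tA_i-A_i^*)$ are genuine characters (Theorem \ref{thm_classfunc}), so their inner products with irreducible characters are nonnegative integers; applied to a nonlinear character of the quotient $\bar G=S\circ C_4$ arising in Ott's step (2), this yields $(\chi_S,\chi)_G=\tfrac{1}{2}(s+1)\notin\Z$, a contradiction. (The expander crossing lemma is used only for Theorem \ref{thm_regGQ}, not here.) Note also that the paper does not reprove Ott's steps (1), (3), (4); it replaces only the single step where the hypothesis that $s$ is an odd power of $2$ was used, whereas your plan would need to establish all of them from scratch.
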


\begin{thm}\label{thm_regGQ}
A generalized quadrangle of even order does not admit a point regular group of automorphisms.
\end{thm}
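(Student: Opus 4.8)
\section*{Proof proposal for Theorem~\ref{thm_regGQ}}

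The plan is to convert the existence of a point regular group into a partial difference set, to exploit the fact that such a group is forced to have \emph{odd} order, and then to run a two‑pronged attack combining character theory with the local clique structure of the collinearity graph. Suppose $\cS$ has order $(s,t)$ with $s$ even and admits a group $G$ acting regularly on its $v=(s+1)(st+1)$ points. Identifying the points with $G$, the collinearity graph becomes a Cayley graph $\mathrm{Cay}(G,S)$, where $S$ is the set of nonidentity elements collinear with the identity $e$. Here $S=S^{-1}$, $|S|=s(t+1)$, and in the integral group ring one has $\widehat{S}^2=(s-1)(t+1)\,e+(s-t-2)\,\widehat{S}+(t+1)\,\widehat{G}$ (with $\widehat{G}=\sum_{g\in G}g$), so $S$ is a partial difference set realizing the strongly regular parameters $\lambda=s-1$, $\mu=t+1$ of the GQ. The extra geometric data I would retain is that $S$ is the disjoint union of $t+1$ cliques $L_1,\dots,L_{t+1}$ of size $s$ — the lines through $e$ — and that in a thick GQ the lines are exactly the maximal cliques of the graph.

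The decisive elementary observation is that $v=(s+1)(st+1)$ is odd when $s$ is even, so $|G|$ is odd; by Feit--Thompson $G$ is solvable, every irreducible character degree is odd, and every nontrivial irreducible character is non‑real, so the nontrivial characters fall into conjugate pairs $\{\chi,\overline{\chi}\}$. Applying each irreducible representation $\rho_\chi$ to the group‑ring identity shows that, for $\chi\neq 1$, the Hermitian matrix $\rho_\chi(\widehat{S})$ has all eigenvalues in $\{s-1,-(t+1)\}$. Writing $a_\chi$ for the multiplicity of $s-1$, the total multiplicity of the graph eigenvalue $s-1$ is $f=\sum_{\chi\neq 1}d_\chi a_\chi$. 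Since conjugate characters have equal degree and, the eigenvalues being real, equal $a_\chi$, the pairing forces $f=\sum_{\mathrm{pairs}}2d_\chi a_\chi$ to be \emph{even}, and likewise for the multiplicity $g$ of $-(t+1)$. Because these multiplicities are rigidly determined by $(s,t)$ as $f=\tfrac{st(s+1)(t+1)}{s+t}$ and $g=\tfrac{s^2(st+1)}{s+t}$, every parameter pair for which $f$ (equivalently $g$) is odd is eliminated outright; for order $(q,q)$ this already disposes of all $q\equiv 2\pmod 4$.

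For the parameters that survive this parity test I would sharpen the character theory and bring in extremal graph theory. On the character side I would extract finer $2$‑adic and structural constraints from the integrality of $a_\chi=\tfrac{\chi(\widehat{S})+d_\chi(t+1)}{s+t}$ and from the action of $G$ on the $t+1$ lines through $e$, aiming to pin down normal or Sylow structure of $G$. On the combinatorial side I would use that the collinearity graph is locally a disjoint union of $t+1$ cliques of size $s$ and is vertex transitive under an odd‑order group, and count configurations — short closed walks and induced subgraphs supported on the maximal cliques — against the spectral data to force a numerical impossibility.

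I expect this last step to be the main obstacle. The parity argument of the second paragraph is exact but partial: it leaves an infinite family of admissible parameters (in the equal‑order case, essentially those with $4\mid q$), precisely because modulo higher powers of $2$ the multiplicity $f=\sum_{\mathrm{pairs}}2d_\chi a_\chi$ is no longer determined by $(s,t)$ alone but depends on how the $a_\chi$ distribute among the conjugate pairs. Closing this gap is where the novelty must lie: one has to turn the rigidity of a locally‑clique strongly regular graph carrying a regular odd‑order group into a genuine extremal bound, and I anticipate that reconciling that quantitative input with the character‑theoretic divisibility constraints will be the crux of the argument.
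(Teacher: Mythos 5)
Your setup is sound and matches the paper's starting point: identifying points with $G$, the group-ring identity $\Delta^2=(t+1)(s-1)+(s-t-2)\Delta+(t+1)G$ (the paper's Lemma~\ref{lem_Delta2}), the observation that $|G|=(1+s)(1+s^2)$ is odd and hence $G$ is solvable, and the eigenvalue analysis of $\rho_\chi(\widehat S)$. Your parity argument is also correct as far as it goes: since every nontrivial irreducible character of an odd-order group is non-real, the multiplicity $f=\sum_{\chi\neq 1}d_\chi a_\chi$ of the eigenvalue $s-1$ must be even, while for order $(q,q)$ one has $f=\tfrac{q(q+1)^2}{2}$, which is odd exactly when $q\equiv 2\pmod 4$. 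That is a clean, self-contained elimination of the case $q\equiv 2\pmod 4$.

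The gap is that the case $4\mid q$ --- which is essentially all of the difficulty --- is not proved; your third and fourth paragraphs are a statement of intent (``I would sharpen the character theory\ldots'', ``count configurations\ldots against the spectral data'') rather than an argument, and you acknowledge this yourself. Nothing in the sketch identifies the ingredients that actually close the problem. The paper's proof does not proceed by refining the multiplicity parity at all: it first uses the Expander Crossing Lemma on the point-line incidence graph to prove the quantitative bound $|\cL_2(g)|<(1+st)(2+\sqrt{s+t})$ for every $g$ with $g^2\neq 1$ (Theorem~\ref{thm_l2g_upp}), which via Benson-type counting gives $|g^G\cap\Delta|<\tfrac{2+\sqrt{2s}}{1+s}|g^G|$ and the lower bound $|g^G|\geq 1+s$ on all conjugacy class sizes. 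It then runs a structural analysis of the solvable group $G$: the unique minimal normal subgroup $N$ is an elementary abelian $p$-group with $p\equiv 1\pmod 4$, $H=G/C_G(N)$ acts irreducibly and primitively on $N$, Suprunenko's classification of maximal solvable linear groups forces $H\leq\GamL_1(p^d)$, and a chain of arithmetic constraints ($C_G(N)=N=G''$, $[G:G']\mid 1+s$, divisibility of $\tfrac{q_1^e-1}{q_1-1}$) produces the final contradiction. Your proposed route of ``turning the local clique structure into an extremal bound'' gestures at the expander-mixing ingredient but gives no mechanism, and the group-structural half (which consumes most of Section~3.2 of the paper) is entirely absent. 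As written, the proposal proves the theorem only for $s\equiv 2\pmod 4$ and leaves the remaining residue class open.
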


Here is an outline of the paper. For an elation generalized quadrangle $\cS$ of order $(s,t)$ with elation group $G$, we define a $4$-dimensional algebra from the associated $4$-gonal family. We use it to define two class functions $\chi_S,\chi_T$ on $G$ that bear geometric information about $\cS$, and show that they are characters of $G$ in Section 2.1. In the case $\cS$ is a skew translation quadrangle of even order $s$, we apply the characters to provide an alternative proof for the part of Ott's proof in \cite{ott} that requires $s$ to be an odd power of $2$. The new argument works for all powers of $2$, so that we complete the proof of Theorem \ref{thm_STGQ} in Section 2.2 based on the work of Ott. In the case of finite quadrangles  with a point regular group $G$ of automorphisms, we use the Expander Crossing Lemma for bipartite graphs as found in \cite{large} to obtain a bound on $|\cL_2(g)|$ (see \eqref{eqn_reg_P2L2def} for definition) for $g\in G\setminus\{1\}$ in Section 3.1. By using this bound and the theory of solvable linear groups \cite{suprk}, we present the proof of Theorem \ref{thm_regGQ} in Section 3.2.

We use standard notation on group theory as is found in \cite{Gor}. Let $x,y$ be elements in a group $G$, and let $H,K$ be subgroups of $G$. We write $[x,y]=x^{-1}y^{-1}xy$, and set $[H,K]=\la [h,k]:\,h\in H,k\in K\ra$. We use $x^G$ for the conjugacy class of $G$ that contains $x$. We write $C_G(x)$ and $C_G(H)$ for the centralizers of $x$ and $H$ in $G$ respectively. We use $G'=[G,G]$ for the derived subgroup of $G$, and write $G''=[G',G']$ for the derived subgroup of $G'$.

\section{Skew translation generalized quadrangles}

Suppose that $\cS$ is an elation generalized quadrangle of order $(s,t)$ with base point $p$ and elation group $G$, and fix a point $y$ not collinear with $P$. Let $M_0,\ldots,M_t$ be the lines through $y$. By the generalized quadrangle property, there is exactly one point $z_i$ on each $M_i$ that is collinear with $P$. Set $A_i$, $A_i^*$ to be the stabilizers of $M_i$ and $z_i$ respectively for each $i$. Then each $A_i$ has order $s$, each $A_i^*$ has order $st$, and $A_i^*$ contains $A_i$. Moreover, we have
\begin{enumerate}
\item[(K1)] $A_iA_j\cap A_k=1$ for distinct $i,j,k$,
\item[(K2)] $A_i^*\cap A_j=1$ for distinct $i,j$.
\end{enumerate}
We call $(G,\{A_i\},\{A_i^*\})$ the $4$-gonal family associated with $\cS$. It is first noted in \cite{kantor} that we can reconstruct $\cS$ from the associated $4$-gonal family, see also \cite[p.~106]{FGQ}.

We now define a $4$-dimensional algebra from the $4$-gonal family $(G,\{A_i\},\{A_i^*\})$ associated with the elation quadrangle $\cS$ of order $(s,t)$. For a group $H$, we write $H^\#:=H\setminus\{1\}$. Let $\bR[G]$ be the group ring of $G$ over $\bR$, cf. \cite{GR}. For a subset $X$ of $G$, we also write $X$ for the sum in $\bR[G]$ of all the elements in $X$. We define two elements of $\bR[G]$ as follows:
\begin{align}
\Delta&:=A_0^\#+A_1^\#+\cdots+A_t^\#,\label{eqn_ST_Deltadef}\\
\Delta^*&:=(A_0^*)^\#+(A_1^*)^\#+\cdots +(A_t^*)^\#.\label{eqn_ST_Deltasdef}
\end{align}
\begin{lemma}\label{lem_4dim_Del}
Take notation as above. Then $\Delta$ and $\Delta^*$ commute, and we have
\begin{align*}
\Delta^2&=(s-2)(t+1)+(t+1)G+(s-t-2)\Delta-\Delta^*,\\
\Delta\Delta^*&=(s-t-1)(t+1)+t(t+1)G-(t+1)\Delta+(s-t-1)\Delta^*,\\
(\Delta^*)^2&=(st-t-1)(t+1)+t^2(t+1)G+(st-2t-2)\Delta^*.
\end{align*}
In particular, $\la 1,\Delta,\Delta^*,G\ra_{\bR}$ is a $4$-dimensional subalgebra of $\bR[G]$.
\end{lemma}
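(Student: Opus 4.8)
The plan is to carry out all three computations inside the real group ring $\bR[G]$, using that each $A_i$ and each $A_i^*$ is a subgroup: writing a subgroup and its group-ring sum by the same symbol, one has $A_i^2=sA_i$, $(A_i^*)^2=st\,A_i^*$, and $A_iA_i^*=A_i^*A_i=s\,A_i^*$. Expanding $\Delta^2=\sum_{i,j}A_i^\#A_j^\#$ and the analogous double sums for $\Delta\Delta^*$ and $(\Delta^*)^2$, I would split each into its diagonal part ($i=j$) and off-diagonal part ($i\neq j$). With $A_i^\#=A_i-1$ the diagonal parts are immediate from the subgroup identities; for instance $\sum_iA_i^\#A_i^\#=\sum_i\bigl((s-2)A_i+1\bigr)=(s-2)\Delta+(s-1)(t+1)$, and similarly for the other two. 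All the content thus sits in the off-diagonal terms, i.e.\ in understanding $A_iA_j$, $A_iA_j^*$, and $A_i^*A_j^*$ for $i\neq j$.

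Two of the three off-diagonal computations I expect to be routine once the correct intersection sizes are fixed. Axiom (K2) gives $A_i\cap A_j^*=1$, so the map $A_i\times A_j^*\to G$ is injective with image of size $s\cdot st=s^2t=|G|$; hence $A_iA_j^*=G$ for $i\neq j$, and the off-diagonal sum $\sum_{i\neq j}(G-A_i-A_j^*+1)$, combined with the diagonal, produces the formula for $\Delta\Delta^*$ at once. For $(\Delta^*)^2$ the key fact is $|A_i^*\cap A_j^*|=t$ for $i\neq j$: the upper bound holds because $(A_i^*\cap A_j^*)\cap A_i\subseteq A_j^*\cap A_i=1$ embeds $A_i^*\cap A_j^*$ into $A_i^*/A_i$ of order $t$, while the lower bound follows from $(st)^2/|A_i^*\cap A_j^*|=|A_i^*A_j^*|\leq|G|$. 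This forces $A_i^*A_j^*=t\,G$, and summation yields the formula for $(\Delta^*)^2$.

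The genuine obstacle is the off-diagonal term of $\Delta^2$, i.e.\ the product $A_iA_j$ for $i\neq j$. Here $A_i\cap A_j=1$ makes multiplication injective, so $A_iA_j$ is a set of $s^2$ elements, and (K1) shows that in $\sum_{i\neq j}A_i^\#A_j^\#$ both $1$ and every element of $\bigcup_kA_k^\#$ receive coefficient $0$; the sum is therefore supported outside $\{1\}\cup\bigcup_kA_k$, where its coefficient at $g$ is the number $m(g)$ of ordered pairs $(i,j)$, $i\neq j$, with $g\in A_iA_j$. This number is \emph{not} determined by (K1), (K2) and the orders alone — it is exactly where the quadrangle geometry enters and where the $-\Delta^*$ term comes from. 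I would pin it down through the Kantor reconstruction, identifying $g$ with the point $Q=g(y)$; since $g\neq1$ lies outside $\bigcup_kA_k$, the point $Q$ is distinct from $y$ and non-collinear with both $p$ and $y$, so $\{p,y,Q\}$ is a triad. Using that $A_i$ acts regularly on the $s$ points of $M_i$ other than $z_i$ and that an external point is collinear with a unique point of each line, I expect $m(g)$ to equal $t+1$ minus the number of common neighbours of the triad $\{p,y,Q\}$. Then, using that every element of $G$ is an elation fixing each line through $p$, together again with the unique-collinear-point axiom, I expect to show that $g$ fixes $z_l$ \emph{if and only if} $z_l$ is collinear with $Q$, whence the $\Delta^*$-coefficient $n(g)=\#\{l:g\in A_l^*\}$ equals that same number of common neighbours. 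Combining these gives $m(g)=(t+1)-n(g)$, which is precisely the $-\Delta^*$ contribution, and the formula for $\Delta^2$ follows.

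Finally, having all three products expressed in $\la 1,\Delta,\Delta^*,G\ra_{\bR}$, the subalgebra claim is immediate: since $xG=G$ for every $x\in G$, the products $\Delta G$, $\Delta^*G$ and $G^2$ are scalar multiples of $G$, and $1$ is the unit, so together with the three displayed formulas every product of two spanning elements stays in the span. Commutativity of $\Delta$ and $\Delta^*$ I would read off from the computation itself: the anti-involution $\sum a_gg\mapsto\sum a_gg^{-1}$ of $\bR[G]$ fixes $1,G,\Delta,\Delta^*$ (their supports being unions of subgroups, hence inverse-closed), so applying it to the computed expression for $\Delta\Delta^*$, whose right-hand side is visibly invariant, gives $\Delta^*\Delta=\Delta\Delta^*$.
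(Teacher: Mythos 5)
Your proposal is correct, and the routine parts (the diagonal terms, $A_iA_j^*=G$ from (K2), $|A_i^*\cap A_j^*|=t$ hence $A_i^*A_j^*=tG$, and the anti-involution $g\mapsto g^{-1}$ for commutativity) agree with or are minor variants of what the paper does. Where you genuinely diverge is on the off-diagonal part of $\Delta^2$, and there your guiding premise is mistaken: the coefficient $m(g)$ \emph{is} determined by (K1), (K2) and the orders alone. The paper's trick is to fix $i$ and observe that the $t+1$ sets $A_i^*$ and $A_j^\#A_i$ ($j\neq i$) are pairwise disjoint --- disjointness of the translates uses (K1), disjointness from $A_i^*$ uses (K2) --- and have total cardinality $st+t(s-1)s=s^2t=|G|$, so they partition $G$. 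In group-ring form this is $\Delta\cdot A_i^\#=G-A_i^*+(s-1)A_i-\Delta$, and summing over $i$ gives the $\Delta^2$ identity in two lines; in particular your relation $m(g)=(t+1)-n(g)$ with $n(g)=\#\{l:g\in A_l^*\}$ drops out with no geometry at all. Your alternative route through the Kantor reconstruction does work --- one can check that for $Q=y^g$ one has $g\in A_l^*$ iff $z_l\sim Q$, and that for each $j$ the unique point of $M_j$ collinear with $Q$ contributes a pair $(i,j)$ exactly when it is not $z_j$, which yields the same count --- so what you lose is only economy and generality: the paper's argument is shorter and makes clear that the $4$-dimensional algebra is an artifact of the $4$-gonal family axioms themselves, while yours re-derives the key identity from the quadrangle it encodes. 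Both are valid proofs of the lemma.
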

\begin{proof}
 For each $i$, the sets in $\{A_i^*\}\cup\{A_j^\#A_i:\,j\in\{0,\ldots,t\}\setminus\{i\}\}$ 
are pairwise disjoint by (K1) and (K2). They form a partition of $G$, since the sum of their sizes is $st+t(s-1)s=|G|$. We reformulate this fact as $(\Delta-A_i^\#)\cdot A_i+A_i^*=G$ in $\bR[G]$, i.e., $\Delta\cdot A_i^\#=G-A_i^*+(s-1)A_i-\Delta$. Taking summation over $i$, we deduce that
\begin{align*}
\Delta^2&=(t+1)G-(\Delta^*+t+1)+(s-1)(\Delta+t+1)-(t+1)\Delta\\
&=(s-2)(t+1)+(s-t-2)\Delta-\Delta^*+(t+1)G.
\end{align*}
This is the first equation in the lemma.

We have $A_jA_i^*=G$ for $j\ne i$ by (K2). Taking summation over $j\ne i$, we obtain $(\Delta+t-A_i^\#)A_i^*=tG$, i.e., $\Delta (A_i^*)^\#=(s-1)A_i^*+t(G-A_i^*)-\Delta=tG+(s-t-1)A_i^*-\Delta$. Taking summation over $i$, we obtain the second equation in the lemma. Starting with the fact $A_j^*A_i=G$ for $j\ne i$, we derive the same expression for $\Delta^*\Delta$, so $\Delta$ and $\Delta^*$ commute in $\bR[G]$. The third equation is derived in a similar way, and we omit the details. This completes the proof.
\end{proof}

\subsection{The characters $\chi_S,\chi_T$ of the elation group $G$}\label{subsec_egq}

For each element $g\in G$, we define two functions on $G$ as follows:
{\small
\begin{align}
\chi_S(g)&=\frac{1}{s(s+t)}
\sum_{i=0}^t\left(|C_G(g)|\cdot(s|A_i\cap g^G|+|A_i^*\cap g^G|)-|G/G'|\cdot(s|A_i\cap gG'|+|A_i^*\cap gG'|)\right), \label{eqn_chiS_Del}\\
\chi_T(g)&=\frac{\gcd(s,t)}{st(s+t)}\sum_{i=0}^t\left(|C_G(g)|\cdot(t|A_i\cap g^G|-|A_i^*\cap g^G|)-|G/G'|\cdot(t|A_i\cap gG'|-|A_i^*\cap gG'|)\right).\label{eqn_chiT_Del}
\end{align}}
\noindent The right hand sides of \eqref{eqn_chiS_Del} and \eqref{eqn_chiT_Del} only involve $|C_G(g)|$, $g^G$ and $gG'$, so $\chi_S$ and $\chi_T$ are constant on the conjugacy classes, i.e., they are class functions of $G$.  We establish the following result in this section.
\begin{thm}\label{thm_classfunc}
Suppose that $\cS$ is an elation generalized quadrangle of order $(s,t)$ with elation group $G$ and associated $4$-gonal family $(G,\{A_i\},\{A_i^*\})$. The class functions $\chi_S$, $\chi_T$  defined by \eqref{eqn_chiS_Del} and \eqref{eqn_chiT_Del} respectively are characters of $G$.
\end{thm}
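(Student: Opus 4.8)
The plan is to prove the theorem via the inner‑product criterion: I will show that for every irreducible character $\eta$ of $G$ the multiplicities $\la\chi_S,\eta\ra$ and $\la\chi_T,\eta\ra$ are non‑negative integers, which makes each class function a genuine character. Fix a unitary representation $\rho_\eta$ affording $\eta$, put $d=\eta(1)$, and for a subset $X\subseteq G$ write $\eta(X)=\sum_{x\in X}\eta(x)$ and $\rho_\eta(X)=\sum_{x\in X}\rho_\eta(x)$. Since every $A_i$ and $A_i^*$ is a subgroup, $\Delta$ and $\Delta^*$ are invariant under $g\mapsto g^{-1}$, so $\rho_\eta(\Delta)$ and $\rho_\eta(\Delta^*)$ are \emph{commuting Hermitian} operators on $V_\eta$ (they commute by Lemma \ref{lem_4dim_Del}), hence are simultaneously diagonalizable with real eigenvalues. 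The key structural input is that their joint spectrum is tightly constrained: because $\la 1,\Delta,\Delta^*,G\ra_\bR=\mathcal{A}$ is a $4$‑dimensional \emph{commutative} algebra, any joint eigenvalue triple $(\delta,\delta^*,\gamma)$ of $(\Delta,\Delta^*,G)$ is an algebra character of $\mathcal{A}$, i.e.\ a scalar solution of the equations obtained by substituting $\delta,\delta^*,\gamma$ for $\Delta,\Delta^*,G$ in the multiplication table of Lemma \ref{lem_4dim_Del}.

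Solving those equations (use $G^2=|G|G$ with $|G|=s^2t$ to split off $\gamma\in\{0,|G|\}$, then factor the resulting quadratics) gives exactly four characters of $\mathcal{A}$:
$$\theta_0=\big((t+1)(s-1),(t+1)(st-1),s^2t\big),\qquad \theta_1=\big(-(t+1),-(t+1),0\big),$$
$$\theta_2=\big(s-1,-(t+1),0\big),\qquad \theta_3=\big(s-t-1,\,st-t-1,\,0\big).$$
These are pairwise distinct, so $\mathcal{A}\cong\bR^4$ is semisimple and every joint eigenvalue of $\big(\rho_\eta(\Delta),\rho_\eta(\Delta^*),\rho_\eta(G)\big)$ is one of $\theta_0,\dots,\theta_3$. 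Writing $m_j(\eta)$ for the dimension of the $\theta_j$‑eigenspace in $V_\eta$, we get $\sum_j m_j(\eta)=d$, $\eta(\Delta)=\sum_j\delta_j m_j(\eta)$, $\eta(\Delta^*)=\sum_j\delta_j^* m_j(\eta)$, and $m_0(\eta)=0$ for nontrivial $\eta$ (as $\rho_\eta(G)=0$ there). The design of $\chi_S,\chi_T$ is now visible in two eigenvalue coincidences: $s\delta_2+\delta_2^*=s\delta_3+\delta_3^*=s^2-s-t-1$ while $s\delta_1+\delta_1^*=-(s+1)(t+1)$; and $t\delta_1-\delta_1^*=t\delta_3-\delta_3^*=1-t^2$ while $t\delta_2-\delta_2^*=st+1$. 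These are exactly what let $\chi_S$ isolate $m_2+m_3$ and $\chi_T$ isolate $m_2$.

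I would then carry out the inner‑product computation. Splitting $\chi_S=\chi_S^{(1)}-\chi_S^{(2)}$ along the two summands of \eqref{eqn_chiS_Del}, the $|C_G(g)|$‑part collapses: grouping $\tfrac1{|G|}\sum_g|C_G(g)|\big(\sum_i(s|A_i\cap g^G|+|A_i^*\cap g^G|)\big)\overline{\eta(g)}$ by conjugacy classes and using $|C_G(g)|\,|g^G|=|G|$ together with the reality of $\eta(A_i),\eta(A_i^*)$ turns it into $\frac{s\eta(\Delta)+\eta(\Delta^*)+(s+1)(t+1)d}{s(s+t)}$, which by the eigenvalue data equals $m_2(\eta)+m_3(\eta)$ for nontrivial $\eta$. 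For the $|G/G'|$‑part I use $\sum_{g\in aG'}\eta(g)=\Tr\big(\rho_\eta(a)\rho_\eta(G')\big)$, with $\rho_\eta(G')=|G'|I$ when $\eta$ is linear and $0$ otherwise; since $|G/G'|\,|G'|=|G|$, the term $\chi_S^{(2)}$ contributes $0$ to nonlinear $\eta$ and \emph{exactly} the linear part of $\chi_S^{(1)}$ to each linear $\eta$. Hence $\la\chi_S,\eta\ra=m_2(\eta)+m_3(\eta)$ for nonlinear $\eta$ and $0$ for linear $\eta$; these are non‑negative integers, so $\chi_S$ is already a character (indeed the character of the non‑linear part of the module $\bC[G](f_2+f_3)$, where $f_j$ is the primitive idempotent of $\mathcal{A}$ attached to $\theta_j$). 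The identical computation for $\chi_T$, using the second pair of coincidences, yields $\la\chi_T,\eta\ra=0$ for linear $\eta$ and
$$\la\chi_T,\eta\ra=\frac{\gcd(s,t)}{st(s+t)}\big(t\eta(\Delta)-\eta(\Delta^*)+(t^2-1)d\big)=\frac{\gcd(s,t)}{s}\,m_2(\eta)$$
for nonlinear $\eta$.

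The non‑negativity is automatic, so the whole theorem reduces to integrality, and this is where the main obstacle lies: I must show $\tfrac{s}{\gcd(s,t)}\mid m_2(\eta)$ for every nonlinear $\eta$, which is exactly the statement that $\la\chi_T,\eta\ra\in\Z$. This is the single non‑formal step. It is vacuous in the equal‑order case $s=t$, where $\gcd(s,t)=s$ and $\la\chi_T,\eta\ra=m_2(\eta)$; this case already suffices for the skew‑translation application of Theorem \ref{thm_STGQ}. For general $(s,t)$ it is a genuine divisibility for the eigenvalue multiplicity, which the spectral picture identifies as $m_2(\eta)=\dim\big(\sum_i V_\eta^{A_i}\big)-\dim\big(\sum_i V_\eta^{A_i^*}\big)$. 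I expect to establish it by analysing the $\theta_2$‑isotypic module $\bC[G]f_2$ through the index‑$t$ chains $A_i\le A_i^*$ (with $|A_i|=s$): the natural route is to exhibit a semiregular action of a subgroup of order $s/\gcd(s,t)$ commuting with $G$, forcing $s/\gcd(s,t)$ to divide each $m_2(\eta)$; a fallback is a direct congruence argument on the (rational, algebraic‑integer) values $\eta(\Delta),\eta(\Delta^*)$. Once this divisibility is secured, $\la\chi_T,\eta\ra$ is a non‑negative integer for all $\eta$ and $\chi_T$ is a character, finishing the proof.
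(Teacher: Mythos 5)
Your spectral setup is essentially the paper's own: the four algebra characters $\theta_0,\dots,\theta_3$ of $\la 1,\Delta,\Delta^*,G\ra_{\bR}$ are exactly the eigenvalue triples the paper computes from Lemma \ref{lem_4dim_Del}, the eigenvalue coincidences you point out are precisely what make $\eta(S)$ a nonnegative multiple of $s(s+t)$ and $\eta(T)$ a nonnegative multiple of $t(s+t)$, and your handling of the $|G/G'|$-summand via $\rho_\eta(G')$ reproduces the paper's subtraction of the linear-character contribution using the column orthogonality relations. In particular your argument for $\chi_S$ is complete: $\la\chi_S,\eta\ra=m_2(\eta)+m_3(\eta)$ requires no further input.

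For $\chi_T$, however, you have correctly isolated the crux --- that $\tfrac{s}{\gcd(s,t)}$ must divide $m_2(\eta)$ --- but you do not prove it; you only name two candidate strategies, and the first (a semiregular action of a subgroup of order $s/\gcd(s,t)$ commuting with $G$) has no visible source in the data of a general $4$-gonal family. This is a genuine gap, and it is not vacuous for the theorem as stated, which concerns arbitrary $(s,t)$. The missing step is short and is in the direction of your ``fallback'': since each $A_i$ is a subgroup of order $s$, the relation $A_i^2=sA_i$ in $\bR[G]$ gives $\rho_\eta(A_i)^2=s\rho_\eta(A_i)$, so $\rho_\eta(A_i)$ is diagonalizable with eigenvalues in $\{0,s\}$ and $\eta(A_i)\in s\,\Z_{\ge 0}$; likewise $\eta(A_i^*)\in st\,\Z_{\ge 0}$. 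Summing over $i$ gives $\eta(T)=t\sum_i\eta(A_i)-\sum_i\eta(A_i^*)\in st\,\Z$. Combined with your spectral identity $\eta(T)=t(s+t)\,m_2(\eta)\ge 0$, the value $\eta(T)$ is a nonnegative multiple of $\lcm\bigl(st,\,t(s+t)\bigr)=st(s+t)/\gcd(s,t)$, which is exactly the divisibility $\tfrac{s}{\gcd(s,t)}\mid m_2(\eta)$ you need. With that inserted, your proof closes and coincides in substance with the paper's.
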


The remaining part of this subsection is devoted to the proof of Theorem \ref{thm_classfunc}. For this purpose, we consider the character values of $\Delta,\Delta^*$ by examining the $4$-dimensional algebra in Lemma \ref{lem_4dim_Del}. For a complex character $\chi$ of $G$, its kernel is $\ker(\chi):=\{g\in G:\,\chi(g)=\chi(1)\}$. If $\chi$ is linear, then by \cite[Remark~4.2.11]{GR} we extend it to a ring homomorphism 
\[
  \chi:\,\bR[G]\rightarrow \mathbb{C},\quad \sum_{g\in G}a_gg\mapsto \sum_{g\in G}a_g\chi(g)\quad \textup{ for $a_g$'s in $\bR$}.
\] 

\begin{lemma}\label{lem_chiH}
If $\chi$ is a linear character of a finite group $G$ and $H$ is a subgroup, then $\chi(H)=|H|$ or $0$ according as $H$ is in $\ker(\chi)$ or not.
\end{lemma}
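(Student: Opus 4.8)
The plan is to unwind the definition of $\chi(H)$ through the ring homomorphism extension of $\chi$ and then split into the two cases according to whether $H$ lies in $\ker(\chi)$. By the extension of the linear character $\chi$ to $\bR[G]$ recalled just before the lemma, the image of the group-ring element $H=\sum_{h\in H}h$ is simply
\[
\chi(H)=\sum_{h\in H}\chi(h).
\]
Since $\chi$ is linear, its restriction $\chi|_H$ is a homomorphism from $H$ into $\mathbb{C}^\times$, that is, a linear character of $H$, and the entire question reduces to evaluating this character sum. Note also that for a linear character one has $\chi(1)=1$, so that $\ker(\chi)=\{g\in G:\,\chi(g)=1\}$, which guarantees that the two alternatives below are exhaustive and mutually exclusive.

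First I would dispose of the case $H\subseteq\ker(\chi)$. Here $\chi(h)=\chi(1)=1$ for every $h\in H$, so the sum collapses to $\sum_{h\in H}1=|H|$, yielding the first alternative. For the remaining case $H\not\subseteq\ker(\chi)$, the restriction $\chi|_H$ is a nontrivial linear character, so there is an element $h_0\in H$ with $\chi(h_0)\ne 1$. The key step is the standard averaging argument: since left translation $h\mapsto h_0 h$ is a bijection of $H$, reindexing gives
\[
\chi(h_0)\sum_{h\in H}\chi(h)=\sum_{h\in H}\chi(h_0 h)=\sum_{h\in H}\chi(h).
\]
Hence $(\chi(h_0)-1)\sum_{h\in H}\chi(h)=0$, and as $\chi(h_0)\ne 1$ we conclude $\chi(H)=\sum_{h\in H}\chi(h)=0$, which is the second alternative.

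This argument is entirely elementary, and I do not expect any genuine obstacle. The only points requiring care are the observation that $\ker(\chi)=\{g:\chi(g)=1\}$ for a linear character, so that the casework is clean, and the verification that the substitution $h\mapsto h_0 h$ is a bijection of $H$, which is what legitimizes the reindexing in the displayed identity and thereby forces the character sum to vanish in the nontrivial case.
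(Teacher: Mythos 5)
Your proof is correct and is essentially the same as the paper's: the paper likewise handles the case $H\subseteq\ker(\chi)$ as immediate and, in the other case, uses the identity $H=gH$ in the group ring (your reindexing $h\mapsto h_0h$) to get $\chi(H)(1-\chi(g))=0$ and hence $\chi(H)=0$.
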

\begin{proof}
The claim is clear if $H$ is contained in $\ker(\chi)$, so assume that $H$ contains an element $g$ such that $\chi(g)\ne 1$. We deduce from $H=gH$ that $\chi(H)=\chi(g)\chi(H)$, i.e., $\chi(H)(1-\chi(g))=0$. It follows that $\chi(H)=0$ as desired. This completes the proof.
\end{proof}

\begin{proposition}\label{prop_linear_char}
Suppose that $\chi$ is a nonprincipal linear character of $G$, and let $u$ and $u'$ be the numbers of $A_i$ and $A_i^*$ that are contained in $\ker(\chi)$ respectively. Then we have
\begin{equation}\label{eqn_char_Del}
\chi(\Delta)=su-t-1, \quad\chi(\Delta^*)=stu'-t-1,
\end{equation}
where $(u,u')$ is one of $(1,1)$, $(0,0)$ and $(\frac{t}{s}+1,0)$.
\end{proposition}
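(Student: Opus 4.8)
The plan is to obtain the two displayed formulas directly from Lemma~\ref{lem_chiH}, and then to pin down the admissible pairs $(u,u')$ by pushing the three quadratic identities of Lemma~\ref{lem_4dim_Del} through the ring homomorphism $\chi$.

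First I would compute $\chi(\Delta)$ and $\chi(\Delta^*)$. Writing $A_i^\#=A_i-1$ in $\bR[G]$ gives $\chi(A_i^\#)=\chi(A_i)-1$, and by Lemma~\ref{lem_chiH} the value $\chi(A_i)$ is $s$ when $A_i\le\ker(\chi)$ and $0$ otherwise; similarly $\chi(A_i^*)$ is $st$ or $0$. Summing over the $t+1$ indices and recalling that exactly $u$ of the $A_i$ and $u'$ of the $A_i^*$ lie in $\ker(\chi)$ yields $\chi(\Delta)=u(s-1)-(t+1-u)=su-t-1$ and, in the same way, $\chi(\Delta^*)=stu'-t-1$. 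This settles \eqref{eqn_char_Del}.

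Next I would set $a:=\chi(\Delta)$ and $b:=\chi(\Delta^*)$ and use that $\chi$ is nonprincipal: applying Lemma~\ref{lem_chiH} with $H=G$ gives $\chi(G)=0$, while $\chi$ fixes the scalar term of each relation. Applying $\chi$ to the three identities of Lemma~\ref{lem_4dim_Del} would then produce
\begin{align*}
a^2&=(s-2)(t+1)+(s-t-2)a-b,\\
ab&=(s-t-1)(t+1)-(t+1)a+(s-t-1)b,\\
b^2&=(st-t-1)(t+1)+(st-2t-2)b.
\end{align*}
The third equation is a quadratic in $b$ that should factor as $(b+t+1)\bigl(b-(st-t-1)\bigr)=0$, forcing $u'\in\{0,1\}$. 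Substituting each value of $b$ into the first equation gives a quadratic in $a$, expected to factor as $(a+1)\bigl(a-(s-t-1)\bigr)=0$ when $u'=1$ and as $(a+t+1)\bigl(a-(s-1)\bigr)=0$ when $u'=0$; translating back, $u'=1$ yields $u\in\{t/s,1\}$ and $u'=0$ yields $u\in\{0,t/s+1\}$.

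The remaining, and only delicate, step is to discard the spurious roots using the bilinear relation. When $b=-t-1$ the second identity should collapse to $0=0$, so both $(0,0)$ and $(\tfrac{t}{s}+1,0)$ survive. When $b=st-t-1$, substituting the candidate $a=-1$ into the second identity should leave a residue equal to $st(s-t)$; this is nonzero for $s\ne t$, excluding $a=-1$, whereas for $s=t$ the two roots of the quadratic coincide at $a=-1=s-t-1$. Either way $u=1$, so $u'=1$ forces $(1,1)$, and the three surviving pairs are exactly those claimed. I expect this elimination to be the crux: the first and third relations by themselves leave the extraneous value $a=-1$ in play, and only the cross term $\Delta\Delta^*$, together with a separate glance at the degenerate case $s=t$ where the roots merge, removes it.
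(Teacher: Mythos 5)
Your proposal is correct, and the derivation of \eqref{eqn_char_Del} from Lemma~\ref{lem_chiH} is exactly the paper's. Where you diverge is in pinning down $(u,u')$: the paper first deduces $u'\le 1$ and ``$u'=1\Rightarrow u=1$'' from the group-theoretic facts $A_jA_i^*=G$ and $A_i^*A_j^*=G$ for $i\ne j$ (consequences of (K2) and $A_i<A_i^*$), and then needs only the \emph{first} identity of Lemma~\ref{lem_4dim_Del}, which after substitution reads $u^2s-ut-us+u't=0$ and immediately yields the three pairs. You instead stay entirely inside the $4$-dimensional algebra: the third identity factors as $(b+t+1)(b-(st-t-1))=0$ to give $u'\in\{0,1\}$, the first identity then factors in each case, and the cross relation $\Delta\Delta^*$ kills the spurious root $a=-1$ (your residue computation is right up to sign: substituting $a=-1$, $b=st-t-1$ leaves a discrepancy of $st(s-t)$, so the degenerate case $s=t$, where $s-t-1=-1$ and the roots merge, must be handled separately as you do). Both arguments are complete; yours is self-contained and never invokes the Kantor-family axioms beyond what is already encoded in Lemma~\ref{lem_4dim_Del}, at the cost of the extra elimination step and the $s=t$ case split, while the paper's use of $A_i^*A_j^*=G$ makes the elimination automatic and keeps the algebra to a single quadratic.
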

\begin{proof}
 Since $\chi$ is a nonprincipal linear character, we deduce that $\chi(G)=0$ by Lemma \ref{lem_chiH}. For each $i$ we have  $\chi(A_i)=s$ or $0$ according as $A_i$ is contained in $\ker(\chi)$ or not by the same lemma, and there is a similar result for $A_i^*$. Taking summation over $i$, we obtain the expressions of $\chi(\Delta)$, $\chi(\Delta^*)$ as in \eqref{eqn_char_Del}, where $u,u'$ are as in the statement of the proposition.

Recall that  $A_iA_j^*=G$ for distinct $i,j$ by (K2) and $A_i<A_i^*$ for each $i$. It follows that $A_i^*A_j^*=G$ for $i\ne j$. We deduce that $\ker(\chi)$ contains at most one $A_i^*$, and if $A_i^*$ is in $\ker(\chi)$ then the only $A_j$ contained in $\ker(\chi)$ is $A_i$. In other words, we have $u'\le 1$, and $u'=1$ implies that $u=1$. We apply $\chi$ to both sides of the first equation in Lemma \ref{lem_4dim_Del} to obtain
\begin{align*}
(su-t-1)^2&=(s-2)(t+1)+(s-t-2)(su-t-1)-(stu'-t-1),
\end{align*}
which simplifies to $u^2s-ut-us+u't=0$. When $u'=0$, we deduce that $u=0$ or $u=\frac{t}{s}+1$. This completes the proof.
\end{proof}

\begin{corollary}\label{cor_s_notdiv_t}
Suppose that $s$ does not divide $t$.  Then $A_i^*G'=A_iG'$ for each $i$.
\end{corollary}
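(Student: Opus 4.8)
The plan is to exploit an integrality constraint hidden in Proposition~\ref{prop_linear_char}. Since each $A_i$ is a subgroup, the number $u$ of the $A_i$ contained in $\ker(\chi)$ is a nonnegative integer. When $s\nmid t$, the value $\frac{t}{s}+1$ is not an integer, so the case $(u,u')=(\frac{t}{s}+1,0)$ cannot occur. Consequently, for every nonprincipal linear character $\chi$ of $G$, the pair $(u,u')$ is forced to be either $(1,1)$ or $(0,0)$. The upshot I want to extract is the equivalence, for each fixed index $i$, that $A_i\le\ker(\chi)$ holds if and only if $A_i^*\le\ker(\chi)$ holds.

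To establish this equivalence I would argue by cases on $(u,u')$. In the case $(0,0)$ neither $A_i$ nor $A_i^*$ lies in $\ker(\chi)$ for any $i$, so both conditions fail simultaneously. In the case $(1,1)$ there is a unique index with $A_i\le\ker(\chi)$ and a unique index with $A_i^*\le\ker(\chi)$; the structural observation already used in the proof of Proposition~\ref{prop_linear_char}, namely that $A_jA_i^*=G$ for $j\ne i$ forces the only $A_j$ in $\ker(\chi)$ to be $A_i$ once $A_i^*\le\ker(\chi)$, shows these two distinguished indices coincide. Hence in both cases the biconditional $A_i\le\ker(\chi)\iff A_i^*\le\ker(\chi)$ holds for every $i$.

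With the equivalence in hand I would finish by a separation argument. Since $A_i\le A_i^*$ and $G'\trianglelefteq G$, both $A_iG'$ and $A_i^*G'$ are subgroups containing $G'$, with $A_iG'\le A_i^*G'$. Suppose for contradiction that the inclusion is proper and choose $g\in A_i^*G'\setminus A_iG'$. Because $G'\le A_iG'$, the quotient $G/A_iG'$ is abelian and $gA_iG'$ is nontrivial, so there is a linear character $\chi$ of $G$ with $A_iG'\le\ker(\chi)$ but $\chi(g)\ne1$. This $\chi$ is nonprincipal and satisfies $A_i\le\ker(\chi)$; by the equivalence it then satisfies $A_i^*\le\ker(\chi)$, whence $A_i^*G'\le\ker(\chi)$ and in particular $\chi(g)=1$, a contradiction. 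Therefore $A_iG'=A_i^*G'$, as claimed.

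The argument is short, and I expect the only delicate point to be the bookkeeping in the second paragraph: from Proposition~\ref{prop_linear_char} one only learns that the counts $u$ and $u'$ both equal $1$, and one must additionally verify that the single $A_i$ and the single $A_i^*$ lying in $\ker(\chi)$ carry the \emph{same} index. This matching is exactly what the relation $A_i^*A_j^*=G$ for $i\ne j$ supplies, and it is the hinge on which the corollary turns.
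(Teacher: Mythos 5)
Your proof is correct, but it takes a genuinely different route from the paper's. Both arguments begin with the same key observation --- when $s\nmid t$ the integrality of $u$ rules out the case $(u,u')=(\tfrac{t}{s}+1,0)$ in Proposition~\ref{prop_linear_char} --- but they convert this into the subgroup equality by different mechanisms. The paper aggregates over all characters of $H=G/G'$: since $\chi(X_2)=t\chi(X_1)$ for every character $\chi$ of $H$, where $X_1=\phi(\Delta)+t+1$ and $X_2=\phi(\Delta^*)+t+1$, it deduces the group-ring identity $X_2=tX_1$ in $\bR[H]$, reads off $\sum_{i=0}^t\left(|A_i^*\cap G'|-t\cdot|A_i\cap G'|\right)=0$ from the coefficient of the identity, and combines this with the termwise inequality $t\cdot|A_i\cap G'|\ge|A_i^*\cap G'|$ (which encodes $A_i^*G'\supseteq A_iG'$) to force equality index by index. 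You instead extract a per-character statement --- $A_i\le\ker(\chi)$ if and only if $A_i^*\le\ker(\chi)$, with the index-matching in the $(1,1)$ case supplied by the fact that $A_j$ and $A_i^*$ cannot both lie in $\ker(\chi)$ for $j\ne i$, exactly as in the proof of Proposition~\ref{prop_linear_char} --- and then conclude by separating points with linear characters of the abelian quotient $G/A_iG'$ (which is legitimate: $A_iG'$ contains $G'$, hence is normal with abelian quotient). Your route avoids the group-ring computation entirely and is arguably more transparent; the paper's route produces the equivalent quantitative relation $|A_i^*\cap G'|=t\cdot|A_i\cap G'|$ in one global identity. One small slip in your closing remark: the relation that supplies the matching is $A_jA_i^*=G$ for $j\ne i$ (i.e.\ (K2)), which is what you correctly invoke in the body of the argument, not $A_i^*A_j^*=G$; the latter only shows $u'\le 1$ and does not by itself tie the distinguished $A$-index to the distinguished $A^*$-index.
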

\begin{proof}
The claim is clear if $G=G'$, so assume that $G'<G$. We set $H:=G/G'$, and let $\phi:\bR[G]\rightarrow\bR[H]$ be the ring homomorphism that linearly extends the natural epimorphism from $G$ to $H$, cf. \cite[Corollary~3.2.8]{GR}. Set $X_1:=\phi(\Delta)+t+1$, $X_2=\phi(\Delta^*)+t+1$. Let $\chi$ be a linear character of $G$, which is naturally a character of $H$. If $\chi$ is principal, then $\chi(X_1)=s(t+1)$, $\chi(X_2)=st(t+1)$. If $\chi$ is nonprincipal, then $(\chi(X_1),\chi(X_2))$ is one of $(0,0)$, $(s,st)$ by Proposition \ref{prop_linear_char}. We thus have $\chi(X_2)=t\chi(X_1)$ for each character $\chi$ of $H$. It follows that $X_2=tX_1$ in $\bR[H]$. By examining the coefficient of the identity, we obtain $\sum_{i=0}^t(|A_i^*\cap G'|-t\cdot|A_i\cap G'|)=0$. On the other hand we deduce from $A_i^*G'\ge A_iG'$ that $\frac{|A_i^*|\cdot|G'|}{|A_i^*\cap G'|}\ge \frac{|A_i|\cdot|G'|}{|A_i\cap G'|}$, i.e., $t\cdot|A_i\cap G'|\ge |A_i^*\cap G'|$. It follows that $|A_i^*\cap G'|=t\cdot|A_i\cap G'|$, and so $A_i^*G'=A_iG'$ for each $i$. This completes the proof.
\end{proof}

We define the following elements of $\bR[G]$:
\begin{align}
S&:=s(\Delta+t+1)+(\Delta^*+t+1)=\sum_{i=0}^t\left( s A_i+A_i^*\right),\\
T&:=t(\Delta+t+1)-(\Delta^*+t+1)=\sum_{i=0}^t\left( t A_i-A_i^*\right).
\end{align}
Their connections with $\chi_S$, $\chi_T$ will become clear soon. Let $\chi$ be a linear character of $G$. If $\chi$ is principal, then $\chi(S)=s(s+t)(t+1)$, $\chi(T)=0$. If $\chi$ is nonprincipal, then $\chi(S)$ is one of $\{s(s+t),\,0\}$, and $\chi(T)$ is one of $\{t(s+t),\,0\}$ by Proposition \ref{prop_linear_char}.

Set $H:=G/G'$, and let $\hat{H}$ be its character group. We identify the characters of $H$ with the linear characters of $G$ in the natural way. Let $\phi:\bR[G]\rightarrow\bR[H]$ be the ring homomorphism that extends the natural epimorphism from $G$ to $H$. For each element $gG'$ of $H$, its coefficient in $\phi(\Delta+t+1)$ is $\sum_{i=0}^t|A_i\cap gG'|$ and its coefficient in $\phi(\Delta^*+t+1)$ is $\sum_{i=0}^t|A_i^*\cap gG'|$ respectively by the definitions of $\Delta$ and $\Delta^*$.  By applying the column orthogonality relation \cite[Theorem~2.8]{Gor}, we obtain
\begin{align}
\sum_{\chi\in\hat{H}}\chi(g)\overline{\chi(S)}&=|H|\cdot \sum_{i=0}^t\left(s|A_i\cap gG'|+|A_i^*\cap gG'|\right),\label{eqn_linchar_S}\\
\sum_{\chi\in\hat{H}}\chi(g)\overline{\chi(T)}&=|H|\cdot \sum_{i=0}^t\left(t|A_i\cap gG'|-|A_i^*\cap gG'|\right),\label{eqn_linchar_T}
\end{align}
for each $g\in G$. The right hand sides of \eqref{eqn_linchar_S}, \eqref{eqn_linchar_T} are divisible by $s(s+t)$ and $t(s+t)$ respectively, since the left hand sides are.

\begin{proposition}\label{prop_nonchi_ST}
Let $\chi$ be a nonlinear irreducible character of $G$. Then
\[
\chi(S)=s(s+t)\omega_\chi,\quad \chi(T)=\frac{(s+t)st}{\gcd(s,t)}z_\chi
\]
for some nonnegative integers $\omega_\chi,\,z_\chi$.
\end{proposition}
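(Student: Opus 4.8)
The plan is to pass through an irreducible representation affording $\chi$ and exploit the quadratic relations that $S$ and $T$ satisfy inside the $4$-dimensional algebra of Lemma~\ref{lem_4dim_Del}. First I would record the two products I need. Writing $S=s(\Delta+t+1)+(\Delta^*+t+1)$ and $T=t(\Delta+t+1)-(\Delta^*+t+1)$ and substituting the three identities of Lemma~\ref{lem_4dim_Del}, a direct computation yields the clean relations
\begin{align*}
S^2&=s(s+t)S+(t+1)(s+t)^2G,\\
T^2&=t(s+t)T.
\end{align*}
The crucial feature is that the $G$-terms and the mixed $\Delta\Delta^*$-terms conspire so that $S^2$ and $T^2$ collapse to scalar combinations of $S$, $T$ and $G$ alone (indeed $T^2$ has no $G$-term at all).

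Next, let $\rho:G\to\GL_d(\bC)$ afford the nonlinear irreducible character $\chi$ and extend $\rho$ linearly to $\bC[G]$. Since the group-ring element $G$ is central and $\chi$ is nonprincipal, Schur's lemma together with $\tr\rho(G)=\sum_{g}\chi(g)=|G|\la\chi,1\ra=0$ force $\rho(G)=0$. Applying $\rho$ to the two relations above gives $\rho(S)^2=s(s+t)\rho(S)$ and $\rho(T)^2=t(s+t)\rho(T)$. As $s(s+t)$ and $t(s+t)$ are nonzero, each of $\rho(S),\rho(T)$ is diagonalizable with eigenvalues among $\{0,s(s+t)\}$ and $\{0,t(s+t)\}$ respectively. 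Taking traces, $\chi(S)=s(s+t)\omega_\chi$ and $\chi(T)=t(s+t)z$, where $\omega_\chi,z\ge0$ are the respective eigenvalue multiplicities; this already settles the assertion for $\chi(S)$.

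It remains to upgrade the divisibility of $\chi(T)$ from $t(s+t)$ to $\frac{st(s+t)}{\gcd(s,t)}=(s+t)\lcm(s,t)$. Here I would bring in the restriction multiplicities: for each $A_i$ one has $\chi(A_i)=\sum_{h\in A_i}\chi(h)=|A_i|\,a_i$ with $a_i=\la\textup{Res}_{A_i}\chi,1\ra$ a nonnegative integer, and likewise $\chi(A_i^*)=|A_i^*|\,a_i^*$ with $a_i^*\ge0$ an integer. Since $|A_i|=s$ and $|A_i^*|=st$, this gives $\chi(T)=\sum_i(t\chi(A_i)-\chi(A_i^*))=st\sum_i(a_i-a_i^*)$, so that $st\mid\chi(T)$. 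Combining $st\mid\chi(T)$ with $t(s+t)\mid\chi(T)$ yields $\lcm(st,t(s+t))=\frac{st(s+t)}{\gcd(s,t)}\mid\chi(T)$, and since $\chi(T)\ge0$ we may write $\chi(T)=\frac{st(s+t)}{\gcd(s,t)}z_\chi$ with $z_\chi\in\Z_{\ge0}$, as required.

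The computational heart is the derivation of the two quadratic relations, but the genuinely delicate point is the last step for $T$: the eigenvalue argument alone only produces the factor $t(s+t)$, and it is the integrality of the restriction multiplicities $a_i,a_i^*$ that supplies the extra factor $s/\gcd(s,t)$. I expect this interplay between the algebra structure and the arithmetic of restriction multiplicities to be where the argument must be handled most carefully.
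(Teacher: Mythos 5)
Your proposal is correct and follows essentially the same route as the paper: both kill $\psi(G)$ via Schur's lemma, extract the eigenvalue constraints $\{0,s(s+t)\}$ for $S$ and $\{0,t(s+t)\}$ for $T$ from the algebra of Lemma~\ref{lem_4dim_Del} (the paper by simultaneously diagonalizing $\psi(\Delta),\psi(\Delta^*)$ via the cubic $f$ and the relation $\Delta^*=g(\Delta)$, you by the equivalent quadratic identities $S^2=s(s+t)S+(t+1)(s+t)^2G$ and $T^2=t(s+t)T$, which I have checked), and both obtain $st\mid\chi(T)$ from $A_i^2=sA_i$ and $(A_i^*)^2=stA_i^*$ before taking the $\lcm$. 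The only difference is cosmetic bookkeeping in how the spectral constraint on $S$ and $T$ is packaged.
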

\begin{proof}
Let $\psi$ be a matrix representation for $\chi$, and write $m:=\chi(1)$. Let $I_{m}$ be the identity matrix of order $m$, so that $\psi(1)=I_m$.  For each $i$ we deduce that  $\psi(A_i)^2=s\psi(A_i)$ from $A_i^2=sA_i$. Since the polynomial $X^2-sX$ has two distinct roots $0,s$, $\psi(A_i)$ can be diagonalized and its eigenvalues are either $0$ or $s$. It follows that $\chi(A_i)=sc_{i,\chi}$ for some nonnegative integer $c_{i,\chi}$. Taking summation over $i$, we obtain  $\chi(\Delta)=su_\chi-(t+1)m$ with $u_\chi=\sum_ic_{i,\chi}\ge 0$. Similarly, $\chi(\Delta^*)=stv_\chi-(t+1)m$ with $v_\chi\ge 0$. It follows that $\chi(T)=st(u_\chi-v_\chi)$.

Since $\chi$ is irreducible, we have $\psi(G)=0$ by Schur's Lemma. Applying $\psi$ to the three equations in Lemma \ref{lem_4dim_Del}, we deduce that $\la 1,\psi(\Delta),\psi(\Delta^*)\ra$ is a $3$-dimensional commutative algebra, $\psi(\Delta^*)=g(\psi(\Delta))$ with $g(X)=-X^2+(s-t-2)X+(s-2)(t+1)$, and $\psi(\Delta)$ satisfies a polynomial $f(X)=(X-s+1)(X+t+1)(X+t+1-s)$ with three distinct roots. It follows that $\psi(\Delta)$ and $\psi(\Delta^*)$ can be simultaneously diagonalized. Let $x,y$ be eigenvalues of $\psi(\Delta)$ and $\psi(\Delta^*)$ with a common eigenvector $\mathbf{v}$. By  multiplying both sides of $\psi(\Delta^*)=g(\psi(\Delta))$ with $\mathbf{v}$, we deduce that $y=-x^2+(s-t-2)x+(s-2)(t+1)$. Since $x$ is a root of $f(X)=0$, it follows that
\[
(x,y)=(s-1,-t-1), \;(-t-1,-t-1), \;\textup{or}\; (s-t-1,st-t-1).
\]
The corresponding eigenvalues of $\psi(S)$ are $s(s+t),0,s(s+t)$ respectively in those cases, so $\chi(S)=s(s+t)\omega_\chi$ for some nonnegative integer $\omega_\chi$ by taking the trace of $\psi(S)$. The corresponding eigenvalues of $\psi(T)$ are $t(s+t),0,0$ respectively, so $\chi(T)$ is nonnegative and divisible by $t(s+t)$. Together with the fact that $st$ divides $\chi(T)$, we deduce that $\chi(T)$ is a nonnegative multiple of $\lcm(t(s+t),st)=\frac{st(s+t)}{\gcd(s,t)}$. This completes the proof.
\end{proof}

\begin{proof}[Proof of Theorem \ref{thm_classfunc}] Take notation as above. Let $\chi_1,\ldots,\chi_d$ be all the irreducible characters of $G$, where the first $r$ are linear and the last $d-r$ are nonlinear. By the column orthogonality relation \cite[Theorem~2.8]{Gor}, for each $g\in G$ it holds that
\begin{align}
\sum_{i=1}^d\chi_i(g)\overline{\chi_i(S)}&=|C_G(g)|\cdot\sum_{i=0}^t\left(s|A_i\cap g^G|+|A_i^*\cap g^G|\right), \label{eqn_allchar_S}\\
\sum_{i=1}^d\chi_i(g)\overline{\chi_i(T)}&=|C_G(g)|\cdot\sum_{i=0}^t\left(t|A_i\cap g^G|-|A_i^*\cap g^G|\right).\label{eqn_allchar_T}
\end{align}
For each nonlinear character $\chi_i$, let $\omega_{\chi_i}$ and $z_{\chi_i}$ be the nonnegative integers in Proposition \ref{prop_nonchi_ST}. From the equations  \eqref{eqn_linchar_S}-\eqref{eqn_allchar_T}  and Proposition \ref{prop_nonchi_ST}, we deduce that $\chi_S(g)=\sum_{i=r+1}^d\omega_{\chi_i}\chi_i(g)$ and $\chi_T(g)=\sum_{i=r+1}^dz_{\chi_i}\chi_i(g)$. We conclude that $\chi_S$ and $\chi_T$ are characters of $G$ as desired.
\end{proof}

\subsection{Proof of Theorem \ref{thm_STGQ}}

Take the same notation as in Section \ref{subsec_egq}, and assume that $\cS$ is a skew translation generalized quadrangle of even order $s$. By \cite{chen,Hachen}, $s$ is a power of $2$. By \cite[8.2.2]{FGQ}, $U_0:=\cap_{i=0}^tA_i^*$ is a normal subgroup of $G$ of order $s$, and $A_i^*=A_iU_0$ for each $i$. We deduce from the fact $A_i^*A_j^*=G$ that  $A_i^*\cap A_j^*=U_0$ for distinct $i,j$. The quotient images of the $A_i$'s form a spread of $G/U_0$, so $G/U_0$ is an elementary abelian $2$-group, cf. \cite[Proposition~2.4]{Hachen}. It follows that $\Phi(G),G'\le U_0$. For a property \textbf{P}, we define 
\[
    [\![\textbf{P}]\!]:=\begin{cases}1,\quad \textup{if \textbf{P} holds};\\0,\quad \textup{otherwise}.\end{cases}
\]
\begin{lemma}\label{lem_int_U0gAi}
Take notation as above, and assume that $0\le i,j\le t$.
\begin{enumerate}
\item[(1)] If $g\in U_0$, then $|A_i^*\cap gG'|=|G'|$ and $|A_i\cap gG'|=[\![g\in G']\!]$.
\item[(2)] If $g\in U_0$, then $|A_i^*\cap g^G|=|g^G|$ and $|A_i\cap g^G|=[\![g=1]\!]$.
\item[(3)] If $g\in A_j^*\setminus U_0$, then $|A_i^*\cap g^G|=|g^G|\cdot[\![i=j]\!]$.
\end{enumerate}
\end{lemma}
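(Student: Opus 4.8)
The plan is to reduce all three statements to a handful of structural facts, most of which are available from the preamble. I would first record the inclusions $G'\le U_0\le A_i^*$ for every $i$ (so in particular $G'\le A_i^*$), the normality of $U_0$ in $G$, the fact that each $A_i^*$ is a subgroup with $A_i^*\cap A_j^*=U_0$ for $i\ne j$, and the disjointness $A_i\cap U_0=1$. The last of these I would obtain from a quick order count: since $A_i^*=A_iU_0$ with $|A_i^*|=s^2$ and $|A_i|=|U_0|=s$, the product formula $|A_iU_0|=|A_i|\,|U_0|/|A_i\cap U_0|$ forces $A_i\cap U_0=1$. I would also note the elementary fact that $g^G\subseteq gG'$ for every $g$, since any conjugate equals $g[g,x]$ with $[g,x]\in G'$.

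For parts (1) and (2) I would handle the $A_i^*$ and $A_i$ counts in parallel. When $g\in U_0$, both $gG'$ and $g^G$ lie inside $U_0$: the coset because $G'\le U_0$, and the class because $U_0$ is normal in $G$. As $U_0\le A_i^*$, intersecting with $A_i^*$ recovers the entire coset, respectively class, giving $|A_i^*\cap gG'|=|G'|$ and $|A_i^*\cap g^G|=|g^G|$. Intersecting instead with $A_i$, the relation $A_i\cap U_0=1$ forces $A_i\cap gG'\subseteq\{1\}$ and $A_i\cap g^G\subseteq\{1\}$; the identity lies in the coset exactly when $g\in G'$ and in the class exactly when $g=1$, which is precisely $[\![g\in G']\!]$ and $[\![g=1]\!]$.

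For part (3), with $g\in A_j^*\setminus U_0$, the key observation is the chain $g^G\subseteq gG'\subseteq A_j^*$, where the second inclusion holds because $g\in A_j^*$, $G'\le A_j^*$, and $A_j^*$ is a subgroup. When $i=j$ this at once gives $A_i^*\cap g^G=g^G$. When $i\ne j$ I would argue by contradiction: a conjugate of $g$ lying in $A_i^*$ would also lie in $A_j^*$, hence in $A_i^*\cap A_j^*=U_0$, and since $U_0$ is normal this would propagate back to force $g\in U_0$, contrary to hypothesis. Thus the intersection is empty, and the two cases combine to $|A_i^*\cap g^G|=|g^G|\cdot[\![i=j]\!]$.

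All of the computations are short, and I expect no single calculation to be difficult. The one point requiring genuine care is the $i\ne j$ case of part (3), where three facts must be combined simultaneously — that conjugation moves $g$ only within $gG'\subseteq A_j^*$, that distinct $A_i^*$ meet exactly in $U_0$, and that normality of $U_0$ transfers membership from a conjugate back to $g$ itself. I anticipate that correctly orchestrating this interplay, rather than any routine manipulation, is the main thing to get right.
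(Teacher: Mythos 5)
Your proposal is correct and follows essentially the same route as the paper: coset/class containment in $U_0\le A_i^*$ for the $A_i^*$ counts, the triviality of $A_i\cap U_0$ for the $A_i$ counts, and $A_i^*\cap A_j^*=U_0$ together with normality of $U_0$ for part (3). The minor variations (deriving $A_i\cap U_0=1$ by an order count rather than from (K2), and using $g^G\subseteq gG'\subseteq A_j^*$ in place of the normality of $A_j^*$) are harmless and equally valid.
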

\begin{proof}
(1) Take $g\in U_0$. Since $G'\le U_0$ and $U_0\le A_i^*$, we have $gG'\subseteq A_i^*$ and so $|A_i^*\cap gG'|=|G'|$. If $A_i\cap gG'$ is not empty, then $g\in A_iG'$, and so $g\in A_iG'\cap U_0=G'(A_i\cap U_0)=G'$. Therefore, $A_i\cap gG'$ is nonempty only if $gG'=G'$, i.e., $g\in G'$. We have $|A_i\cap G'|=1$, since $A_i\cap U_0=1$ by (K2). This proves the claim.

(2) Take $g\in U_0$. Since $U_0$ is normal, $g^G$ is contained in $U_0$. The claim then follows from the facts $A_i\cap U_0=1$ and $U_0\le A_j^*$ for each $j$.

(3) Take $g\in A_j^*\setminus U_0$. The groups $U_0$ and $A_j^*$ are normal, so $g^G$ is contained in $A_j^*\setminus U_0$. It follows that $|A_j^*\cap g^G|=|g^G|$. Suppose that $i\ne j$. Since $A_i^*\cap A_j^*=U_0$ and $g^G\cap U_0=\emptyset$, we deduce that $|A_i^*\cap g^G|=0$. This completes the proof.
\end{proof}

\begin{corollary}
Let $\chi_S$, $\chi_T$ be the characters as defined in \eqref{eqn_chiS_Del}, \eqref{eqn_chiT_Del} respectively. Then they coincide on $U_0$, and for $g\in U_0$ we have
\begin{align}
\chi_S(g)&=\begin{cases}0, &\quad\textup{ if } g\in U_0\setminus G',\\
-\frac{s+1}{2s}\cdot|G/G'|,&\quad\textup{ if } g\in G'\setminus\{1\},\\
\frac{s+1}{2s}\cdot(s^3-|G/G'|),&\quad \textup{ if } g=1.
\end{cases}\label{eqn_skew_chiS}
\end{align}
\end{corollary}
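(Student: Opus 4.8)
The plan is to feed the intersection counts of Lemma \ref{lem_int_U0gAi} directly into the definitions \eqref{eqn_chiS_Del} and \eqref{eqn_chiT_Del}, after first simplifying the two prefactors. Since $\cS$ has order $s$ we have $t=s$ and $|G|=s^3$, so $\gcd(s,t)=s$ and $s+t=2s$; hence both prefactors $\frac{1}{s(s+t)}$ and $\frac{\gcd(s,t)}{st(s+t)}$ equal the common constant $\frac{1}{2s^2}$. With $t=s$ the coefficient of $|A_i\cap\cdot|$ is $s$ in both $\chi_S$ and $\chi_T$, and the two defining sums then differ only in the sign attached to the $A_i^*$-terms.

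The first key step is to show that these $A_i^*$-terms drop out on $U_0$, which will simultaneously yield the coincidence $\chi_S=\chi_T$ there. For $g\in U_0$, parts (1) and (2) of Lemma \ref{lem_int_U0gAi} give $|A_i^*\cap g^G|=|g^G|$ and $|A_i^*\cap gG'|=|G'|$ for every $i$, so by the orbit--stabilizer identity and $|G/G'|\cdot|G'|=|G|$ we get
\[
|C_G(g)|\cdot|A_i^*\cap g^G|=|G|=|G/G'|\cdot|A_i^*\cap gG'|.
\]
Thus every $A_i^*$-contribution cancels between the centralizer term and the quotient term in both \eqref{eqn_chiS_Del} and \eqref{eqn_chiT_Del}, regardless of its sign, and each remaining summand reduces to $s\bigl(|C_G(g)|\cdot|A_i\cap g^G|-|G/G'|\cdot|A_i\cap gG'|\bigr)$ in either class function. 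Since this expression is identical for $\chi_S$ and $\chi_T$, the two functions agree on $U_0$.

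It then remains to evaluate the common value. Using the remaining counts $|A_i\cap g^G|=[\![g=1]\!]$ and $|A_i\cap gG'|=[\![g\in G']\!]$ from Lemma \ref{lem_int_U0gAi}, which are independent of $i$, and summing the $s+1$ equal terms, I obtain the single closed formula
\[
\chi_S(g)=\chi_T(g)=\frac{s+1}{2s}\bigl(|C_G(g)|\cdot[\![g=1]\!]-|G/G'|\cdot[\![g\in G']\!]\bigr)\qquad(g\in U_0).
\]
Reading off the three cases $g\in U_0\setminus G'$, $g\in G'\setminus\{1\}$, and $g=1$ (for which $|C_G(1)|=|G|=s^3$) then yields exactly the three values in \eqref{eqn_skew_chiS}.

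Since everything reduces to substituting the tabulated counts and applying two elementary counting identities, there is no serious obstacle; the only point requiring care is bookkeeping the orbit--stabilizer identity $|C_G(g)|\cdot|g^G|=|G|$ uniformly across all $g\in U_0$ (including $g=1$, where both $|g^G|$ and $|A_i\cap g^G|$ equal $1$), so that the $A_i^*$-cancellation and the subsequent case analysis hold without exception.
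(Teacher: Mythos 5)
Your proposal is correct and follows essentially the same route as the paper: substitute the counts from Lemma \ref{lem_int_U0gAi} into the definitions, use $|C_G(g)|\cdot|g^G|=|G|=|G/G'|\cdot|G'|$ to cancel the $A_i^*$-contributions (which also explains why $\chi_S=\chi_T$ on $U_0$, a point the paper leaves as ``similarly''), and read off the three cases.
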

\begin{proof}
Take $g\in U_0$. By \eqref{eqn_chiS_Del} and Lemma \ref{lem_int_U0gAi} we have
\begin{align*}
\chi_S(g)&=\frac{s+1}{2s^2}\cdot\left((s\cdot [\![g=1]\!]+|g^G|)\cdot |C_G(g)|-(s\cdot[[g\in G']]+|G'|)\cdot |G/G'|\right)\\
&=\frac{s+1}{2s}\cdot\left(|C_G(g)|\cdot[\![g=1]\!]-|G/G'|\cdot[\![g\in G']\!]\right),
\end{align*}
so \eqref{eqn_skew_chiS} follows. We obtain the same expression for $\chi_T(g)$ similarly.
\end{proof}

\begin{proof}[Proof of Theorem \ref{thm_STGQ}.] We recall the steps in the proof of Theorem \ref{thm_STGQ}  for the case $s$ is an odd power of $2$ in \cite{ott}: (1) show that $G'=\Phi(G)=[G,U_0]\Phi(U_0)<U_0$, cf. \cite[Theorem~2.7]{ott}, (2) show that $G'=[G,U_0]$, cf. \cite[Theorem~4.1]{ott}, (3) show that $G'=\Phi(U_0)$, cf. \cite[Theorem~4.11]{ott}, and (4) show that $G'=1$ and complete the proof, cf. \cite[Section~5]{ott}. It is explicitly stated on pages 403 and 413 of \cite{ott} that  the assumption that $s$ is not a square emerges first in \cite[Lemma~4.8]{ott} and only there.  Lemma 4.8 and the subsequent Lemmas 4.9-4.10 of \cite{ott} are used to accomplish step (2). The steps (3) and (4) are based on the conclusion in step (2) and do not invoke any of those three lemmas. We now present an alternative proof of step (2) which works for all powers of $2$, so that Theorem \ref{thm_STGQ} follows. For the sake of completeness, we first quote the relevant group theoretic results in \cite{ott}. Suppose to the contrary that $[G,U_0]<G'$. We have $G'=\Phi(G)$ by step (1). By the argument in the beginning of \cite[Section~4]{ott}, there is a normal subgroup $M$ of $G$ with the following properties:
\begin{equation}\label{eqn_skew_Mprop}
[G,U_0]\le M<U_0, \quad[\Phi(G):\,\Phi(G)\cap M]=2.
\end{equation}
We choose $M$ maximal with respect to these properties, and set $\bar{G}=G/M$. By \cite[Lemma~4.3]{ott}, $\Phi(\bar{G})=\overline{\Phi(G)}=\Phi(\overline{U_0})$ and they have order two, $\overline{U_0}\le Z(\bar{G})$, $\bar{G}$ is not abelian and $\overline{U_0}$ is not elementary abelian. By \cite[Lemma~2.6]{ott}, we have $Z(\bar{G})=\overline{U_0}$, so $Z(\bar{G})$ is not elementary abelian. By the classification of nonabelian $2$-groups admitting a Frattini subgroup of order $2$ \cite{extrasp}, there is an elementary abelian subgroup $E$, an extraspecial subgroup $S$ and a cyclic subgroup $C_4=\la w\ra$ of order $4$ such that $\bar{G}=E\times(S\circ C_4)$, where $\circ$ stands for central product. It follows that $\overline{U_0}=Z(\bar{G})=E\times C_4$. By \cite[Lemma~4.4]{ott}, $E\cap \overline{U_0}=1$, i.e., $E=1$ by the choice of $M$. As a corollary,  $\overline{U_0}=Z(\bar{G})=C_4$, $|M|=\frac{s}{4}$ and $\bar{G}=S\circ C_4$ has order $4s^2$, cf. \cite[Corollary~4.5]{ott}. The arguments so far work for all powers of $2$.

The extraspecial $2$-group $S$ has order $2s^2$, and its nonlinear irreducible characters all have degree $s$ by \cite[Theorem~5.5]{Gor}.  Write $z:=w^2$. Let $\chi$ be a nonlinear irreducible character of $\bar{G}$.  There is a linear character $\eta$ of $C_4$ of order $4$ and a nonlinear irreducible character $\chi_1$ of $S$ such that $\chi_1(z)=\eta(z)s$ and $\chi(xw^i)=\chi_1(x)\eta(w)^i$ for $x\in S$ and $i\ge 0$. The character $\chi_1$ of $S$ has degree $s$, and vanishes outside the center $Z(S)=\la z\ra$ due to the orthogonal relation \cite[Theorem~2.2]{Gor} and the fact $\chi_1(1)^2=[S:\,Z(S)]$. Therefore, $\chi(x)=0$ for $x\in \bar{G}\setminus C_4$, and $\chi(w^i)=\eta(w)^is$ for each $i$. We identify $\chi$ as a character of $G$ that contains $M$ in its kernel. Then  $\chi(g)=0$ for $g\in G\setminus U_0$, and $\chi(g)=\eta(\bar{g})s$ for $g\in U_0$, where $\bar{g}$ is the quotient image of $g$ in $\bar{G}$.

Write $M_1:=G'\cap M$. By \eqref{eqn_skew_Mprop} and the fact $\Phi(G)=G'$, we have $[G':\,M_1]=2$. We deduce from $\overline{U_0}=C_4$ and $\overline{G'}=\Phi(\overline{U_0})$ that $\overline{G'}=\la z\ra$. It follows that
\[
 \chi(g)=\begin{cases}s,&\quad \textup{ if } g\in M_1,\\-s,&\quad \textup{ if }g\in G'\setminus M_1.\end{cases}
\]
We now calculate $(\chi_S,\chi)_G$, cf. \cite[p.~120]{Gor}. Since $\chi$ vanishes outside $U_0$, we only need the value of $\chi_S(g)$ for $g\in U_0$, as is found in \eqref{eqn_skew_chiS}. Set $u:=|G/G'|$. We have
\begin{align*}
(\chi_S,\chi)_G
&=\frac{s+1}{2 s^4}\left((s^3-u)\cdot s+(-u)\cdot s\cdot(|M_1|-1)+(-u)\cdot (-s)\cdot(|G'|-|M_1|)\right)\\
&=\frac{1}{2}(s+1)+\frac{(s+1)u}{2s^3}\cdot\left(-1-|M_1|+1+|G'|-|M_1|\right)\\
&=\frac{1}{2}(s+1).
\end{align*}
This is not an integer, contradicting the fact that $\chi_S$ is a character of $G$. This completes the proof of step (2) for all powers of $2$ and so of Theorem \ref{thm_STGQ}.
\end{proof}

\section{Generalized quadrangles with a point regular group of automorphisms}

Suppose that $\mathcal{S}=(\cP,\cL)$ is a finite thick generalized quadrangle of order $(s,t)$ with a group $G$ of automorphisms acting regularly on the points. Here, $\cP$ is the set of points and $\cL$ is the set of lines. We have $|G|=|\cP|=(1+s)(1+st)$. If two distinct points $P,Q$ are collinear, we write $P\sim Q$. Similarly, if two distinct lines $\ell,m$ are concurrent, we write $\ell\sim m$. We fix a point $O$, and identify $O^g$ with $g$ for each $g\in G$. Set
\begin{equation}\label{eqn_Delta_def}
\Delta:=\{g\in G:\,O^g\sim O\},
\end{equation}
which has size $(t+1)s$. If $O\sim O^g$ for some $g\in G$, then clearly $O\sim O^{g^{-1}}$. It follows that $\Delta=\Delta^{(-1)}$, where $\Delta^{(-1)}=\{g^{-1}:\,g\in\Delta\}$.
\begin{lemma}\label{lem_Delta2}
We have $\Delta^2=(t+1)(s-1)+(s-t-2)\Delta+(t+1)G$ in $\bR[G]$.
\end{lemma}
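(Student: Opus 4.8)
The plan is to identify the coefficient of each group element $h$ in $\Delta^2\in\bR[G]$ with a geometric count of common neighbours, and then to evaluate that count case by case using the generalized quadrangle axioms. Writing $[\Delta^2]_h$ for the coefficient of $h$, by definition it is the number of ordered pairs $(a,b)\in\Delta\times\Delta$ with $ab=h$; substituting $a=hb^{-1}$, this is the number of $b\in\Delta$ with $hb^{-1}\in\Delta$.

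The first step is to set up the dictionary between the group ring and the geometry. Since $G$ acts regularly on $\cP$ and each element is a collineation, applying $g_1^{-1}$ gives
\[
O^{g_1}\sim O^{g_2}\iff O\sim O^{g_2g_1^{-1}}\iff g_2g_1^{-1}\in\Delta,
\]
the last equivalence being the definition of $\Delta$. Taking $(g_1,g_2)=(1,b)$ and $(g_1,g_2)=(b,h)$, the two conditions $b\in\Delta$ and $hb^{-1}\in\Delta$ become $O^b\sim O$ and $O^b\sim O^h$. Hence $[\Delta^2]_h$ equals the number of points $X=O^b$ collinear with both $O$ and $O^h$; because $\sim$ denotes strict collinearity, such an $X$ is automatically distinct from $O$ and from $O^h$.

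The second step counts these common neighbours according to the position of $h$. If $h=1$ then $O^h=O$ and every neighbour qualifies, so $[\Delta^2]_1=|\Delta|=(t+1)s$. If $h\in\Delta$, then $O$ and $O^h$ are distinct collinear points on a unique common line $\ell$; the common neighbours are precisely the $s-1$ points of $\ell$ other than $O$ and $O^h$, since any $X\notin\ell$ collinear with both would violate the axiom that $\ell$ carries a unique point collinear with $X$. Thus $[\Delta^2]_h=s-1$. If $h\notin\Delta\cup\{1\}$, then $O$ and $O^h$ are distinct and non-collinear, and applying the same axiom to $O^h$ against each of the $t+1$ lines through $O$ produces exactly $t+1$ common neighbours, so $[\Delta^2]_h=t+1$.

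Finally I would match these with the right-hand side. Noting that $1\notin\Delta$, its coefficient there is $(t+1)(s-1)+(t+1)=(t+1)s$; for $h\in\Delta$ it is $(s-t-2)+(t+1)=s-1$; and for $h\notin\Delta\cup\{1\}$ it is $t+1$. The three values agree with the counts above, which proves the identity. I expect the only delicate point to be the translation step: one must choose the substitution $(a,b)\mapsto(hb^{-1},b)$ so that both membership conditions turn into the clean collinearity statements $O^b\sim O$ and $O^b\sim O^h$ rather than conjugated variants, and keep the left/right action convention consistent throughout. Once this dictionary is in place, the three case counts are routine applications of the quadrangle axioms.
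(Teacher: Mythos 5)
Your proposal is correct and follows essentially the same route as the paper: both identify the coefficient of $h$ in $\Delta^2$ with the number of common neighbours of $O$ and $O^h$ (the paper phrases this as $|\Delta\cap\Delta g|$ via $\Delta=\Delta^{(-1)}$, you via the substitution $a=hb^{-1}$), and then evaluate that count in the three cases $h=1$, $h\in\Delta$, $h\notin\Delta\cup\{1\}$ using the quadrangle axioms. The case values $(t+1)s$, $s-1$, $t+1$ and the final matching with the right-hand side agree with the paper's computation.
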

\begin{proof}
Take $g\in G\setminus\{1\}$. The neighbors of $O^g$ are $O^{xg}$, $x\in\Delta$. If $g\in\Delta$, then $O$ and $O^g$ are collinear and there are $s-1$ points that are collinear with both of them. If $g\not\in\Delta$, then $O$ and $O^g$ are not collinear. In this case, there is exactly one point on each line through $O$ that is collinear with $O^g$. In other words, we have
\[
|\Delta\cap \Delta g|=\begin{cases}s-1,&\quad\textup{ if }g\in\Delta,\\t+1,&\quad\textup{ if } g\in G\setminus (\Delta\cup\{1\}).\end{cases}
\]
Recall that $|\Delta|=(t+1)s$ and $\Delta=\Delta^{(-1)}$. It follows that $|\Delta\cap \Delta g|$ is the coefficient of $g$ in $\Delta^2$ for each $g\in G$. Therefore, we have
\begin{align*}
\Delta^2&=(t+1)s+(s-1)\Delta+(t+1)(G-1-\Delta)\\
        &=(t+1)(s-1)+(s-t-2)\Delta+(t+1)G.
\end{align*}
This completes the proof.
\end{proof}

\begin{remark}
Our definition of $\Delta$ is slightly different from that in \cite{Greg,Yreg}, since we exclude $1$ from $\Delta$. In the case $s=t$, Lemma \ref{lem_Delta2} is equivalent to the fact that $\Delta\cup\{1\}$ is a $((1+s)(1+s^2),s^2+s+1,s+1)$ difference set with multiplier $-1$ in $G$, cf. \cite{Greg}.
\end{remark}

Fix an element $g\in G\setminus\{1\}$. Let $\cL_1(g)$ be the set of lines stabilized by $g$, and define
\begin{align}\label{eqn_reg_P2L2def}
\cP_2(g):=\{P\in\cP:\,P\sim P^g\},\quad \cL_2(g):=\{\ell\in\cL:\,\ell^g\sim \ell\}.
\end{align}
The following result \cite[Lemma~3]{Yreg}, which is a corollary of  \cite[1.9.1,~1.9.2]{FGQ}, is a standard tool in the analysis of generalized quadrangles with point regular groups of automorphisms.
\begin{lemma}\label{lem_Y_Benson}
For each $g\in G\setminus\{1\}$, we have
\begin{align*}
|\cP_2(g)|&=|C_G(g)|\cdot|g^G\cap\Delta|=(s+1)\cdot|\cL_1(g)|+|\cL_2(g)|\\
          &\equiv (s+1)(t+1)\pmod{s+t},
\end{align*}
and $|C_G(g)|\cdot|g^G\cap\Delta^c|\equiv t(s^2-1)\pmod{s+t}$,
where $\Delta^c$ is the complement of $\Delta$ in $G$.
\end{lemma}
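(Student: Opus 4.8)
The plan is to prove the three assertions in turn: the first and last by elementary counting, and the middle congruence by an eigenvalue argument of Benson type, for which Lemma~\ref{lem_Delta2} supplies exactly the spectral input we need.

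First I would establish $|\cP_2(g)|=|C_G(g)|\cdot|g^G\cap\Delta|$. Under the identification of $\cP$ with $G$ via $O^x\leftrightarrow x$, a point $O^x$ lies in $\cP_2(g)$ precisely when $O^x\sim O^{xg}$; applying the collineation $x^{-1}$ and recalling the definition \eqref{eqn_Delta_def} of $\Delta$, this holds if and only if $xgx^{-1}\in\Delta$. As $x$ runs over $G$ the element $xgx^{-1}$ runs over the conjugacy class $g^G$, attaining each member exactly $|C_G(g)|$ times, which yields the first equality.

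Next, for the middle equality $|\cP_2(g)|=(s+1)|\cL_1(g)|+|\cL_2(g)|$ I would use a geometric double count, the crucial input being that $g\neq1$ is fixed-point-free on $\cP$ because $G$ acts regularly. For $P\in\cP_2(g)$ let $m=PP^g$ be the unique line joining $P$ to $P^g$; since $P^g$ lies on both $m$ and $m^g$, the line $m$ is either fixed by $g$ (so $m\in\cL_1(g)$) or concurrent with $m^g$ (so $m\in\cL_2(g)$). This gives a well-defined map $\cP_2(g)\to\cL_1(g)\cup\cL_2(g)$ with disjoint fibers, whose sizes I would compute: over a fixed line every one of its $s+1$ (necessarily non-fixed) points lies in $\cP_2(g)$ and maps back to it, while over a concurrent line $\ell$ with $\ell\cap\ell^g=\{R\}$ the unique preimage is $R^{g^{-1}}$. (Fixed-point-freeness also shows two distinct fixed lines cannot meet, so no overcounting occurs.) Summing fiber sizes gives the identity.

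The congruence $|\cP_2(g)|\equiv(s+1)(t+1)\pmod{s+t}$ is the heart of the matter, and I expect it to be the main obstacle. I would pass to the collinearity graph, whose adjacency matrix $A$ realizes the relation of Lemma~\ref{lem_Delta2} in the regular representation and therefore satisfies $(A-(s-1)I)(A+(t+1)I)=(t+1)J$, so $A$ has eigenvalues $s(t+1)$, $s-1$, $-(t+1)$ with respective eigenspaces $V_0,V_1,V_2$, where $V_0$ is the all-ones line. Let $\Pi$ be the permutation matrix of $g$ on $\cP$; since $g$ is a collineation, $\Pi$ commutes with $A$ and preserves each $V_i$, so $|\cP_2(g)|=\tr(A\Pi)=s(t+1)+(s-1)\tr(\Pi|_{V_1})-(t+1)\tr(\Pi|_{V_2})$, while fixed-point-freeness gives $\tr(\Pi)=1+\tr(\Pi|_{V_1})+\tr(\Pi|_{V_2})=0$. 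The two delicate points are that $\tr(\Pi|_{V_1})$ and $\tr(\Pi|_{V_2})$ are ordinary integers, which holds because the eigenvalues $s-1,-(t+1)$ are integers (so the eigenspaces are rational) and these traces, being sums of roots of unity, are algebraic integers that are rational; and that $s-1\equiv-(t+1)\pmod{s+t}$. Combining these collapses the two eigenspace contributions to $-(t+1)\bigl(\tr(\Pi|_{V_1})+\tr(\Pi|_{V_2})\bigr)\equiv t+1\pmod{s+t}$, and adding $s(t+1)$ gives $(s+1)(t+1)$.

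Finally, the last congruence follows by complementation. Since $g\neq1$ we have $1\notin g^G$, hence $|g^G\cap\Delta^c|=|g^G|-|g^G\cap\Delta|$, so that $|C_G(g)|\cdot|g^G\cap\Delta^c|=|G|-|\cP_2(g)|=(s+1)(st+1)-|\cP_2(g)|$ using $|C_G(g)|\cdot|g^G|=|G|$. Reducing $(s+1)(st+1)$ and $(s+1)(t+1)$ modulo $s+t$ and subtracting leaves $t(s^2-1)$, which completes the proof.
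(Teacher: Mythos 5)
Your proof is correct: the paper itself gives no argument for this lemma, simply citing \cite{Yreg} and \cite[1.9.1, 1.9.2]{FGQ}, and what you have reconstructed is precisely the standard Benson-type eigenvalue argument (together with the fixed-point-free fiber count for the middle equality) that those references contain. All the delicate points — disjointness of $\cL_1(g)$ and $\cL_2(g)$, integrality of $\tr(\Pi|_{V_i})$, and the congruence $s-1\equiv -(t+1)\pmod{s+t}$ — are handled correctly.
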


\subsection{An upper bound on $\cL_2(g)$}

In this subsection, we establish the following upper bound on $\cL_2(g)$, which will play a crucial role in the proof of Theorem \ref{thm_regGQ}.
\begin{thm}\label{thm_l2g_upp}
Suppose that $\cS$ is a thick generalized quadrangle of order $(s,t)$ with a point regular group $G$ of automorphisms. Then for each $g\in G$ such that $g^2\ne 1$, we have $|\cL_2(g)|< (1+st)(2+\sqrt{s+t})$, where $\cL_2(g)$ is as defined in \eqref{eqn_reg_P2L2def}.
\end{thm}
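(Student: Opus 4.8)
The plan is to apply the Expander Crossing Lemma to the bipartite incidence graph $\Gamma$ of $\cS$, whose two sides are $\cP$ and $\cL$ with $P$ adjacent to $\ell$ precisely when $P$ is incident with $\ell$. This graph is biregular of degrees $t+1$ and $s+1$, and if $N$ denotes its $\cP\times\cL$ biadjacency matrix then $NN^{T}=(t+1)I+A$, where $A$ is the adjacency matrix of the collinearity graph; hence $NN^{T}$ has eigenvalues $(s+1)(t+1)$, $s+t$ and $0$, so the second largest singular value of $\Gamma$ is $\sqrt{s+t}$. Writing $e(X,Y)$ for the number of incident pairs with $P\in X\subseteq\cP$ and $\ell\in Y\subseteq\cL$, the relevant edge density is $1/(1+st)$, and the Expander Crossing Lemma of \cite{large} gives
\[
\left|\,e(X,Y)-\frac{|X|\,|Y|}{1+st}\,\right|\le \sqrt{s+t}\,\sqrt{|X|\,|Y|\Big(1-\tfrac{|X|}{|\cP|}\Big)\Big(1-\tfrac{|Y|}{|\cL|}\Big)}.
\]
I would use the lower estimate coming from this inequality, so that the task reduces to exhibiting sets $X,Y$ for which $e(X,Y)$ is forced to be small while $|X|,|Y|$ are large.

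The natural choice is $Y:=\cL_2(g)$ together with the set of meet points $X:=\{P_\ell:\ell\in\cL_2(g)\}$, where $P_\ell:=\ell\cap\ell^{g}$ is the unique common point of the concurrent lines $\ell$ and $\ell^{g}$. First I would record two structural facts: the map $\ell\mapsto P_\ell$ is injective, so $|X|=|Y|=|\cL_2(g)|$, and $\cL_2(g)$ is $g$-invariant with $P_{\ell}^{\,g}=P_{\ell^{g}}$. Both follow from the fact that $g$ is fixed-point-free on $\cP$ (as $G$ acts regularly and $g\ne 1$): if $P_\ell=P_{\ell'}$ then $\ell,\ell'$ both pass through the two distinct points $P_\ell$ and $P_\ell^{\,g^{-1}}$, forcing $\ell=\ell'$. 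This is also where the hypothesis $g^{2}\ne 1$ enters: when $g$ is an involution the point $P_\ell$ would be fixed by $g$, which is impossible, so $\cL_2(g)=\varnothing$ and the bound is trivial.

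The crux of the argument is the claim that every meet point lies on \emph{exactly} two lines of $\cL_2(g)$, namely $\ell$ and $\ell^{g}$; equivalently, each line of $\cL_2(g)$ carries exactly two meet points. I expect this to be the main obstacle, and I would prove it using the absence of triangles in a generalized quadrangle. Suppose $P:=P_\ell$ also lay on a third line $m\in\cL_2(g)$ with meet point $P_m=m\cap m^{g}\ne P$. Then $P$ and $P_m$ are collinear via $m$, $P$ and $P^{g}$ are collinear via $\ell^{g}$ (note $P\in\ell^{g}$ and $P^{g}\in\ell^{g}$), and $P^{g}$ and $P_m$ are collinear via $m^{g}$; since no three pairwise collinear points of a quadrangle can avoid a common line, $P,P^{g},P_m$ are collinear, whence $P_m$ lies on $\ell^{g}$. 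But then $m$ and $\ell^{g}$ share the two points $P,P_m$, forcing $m=\ell^{g}$, a contradiction. Consequently $e(X,Y)=2\,|\cL_2(g)|$ exactly.

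Substituting $|X|=|Y|=|\cL_2(g)|=:L$ and $e(X,Y)=2L$ into the displayed inequality yields
\[
\frac{L^{2}}{1+st}-2L\le \sqrt{s+t}\,L,
\]
and dividing by $L$ (the case $L=0$ being trivial) gives $L\le(1+st)(2+\sqrt{s+t})$, the strict inequality following from the factor $(1-|X|/|\cP|)(1-|Y|/|\cL|)<1$ in the crossing lemma. The points requiring care are the verification that the second singular value is exactly $\sqrt{s+t}$, the fixed-point-free bookkeeping that makes $\ell\mapsto P_\ell$ injective and $\cL_2(g)$ invariant, and above all the triangle-free step that isolates the constant $2$.
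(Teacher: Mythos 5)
Your proposal is correct and follows essentially the same route as the paper: the paper's set $\cP_2'(g)$ is exactly your set of meet points $\{\ell\cap\ell^g:\ell\in\cL_2(g)\}$, and its Lemmas 3.6--3.7 establish the same ``exactly two'' incidence count via the same no-triangle argument before applying the Expander Crossing Lemma to the incidence graph with $|\lambda_3|=\sqrt{s+t}$. The only (harmless) cosmetic gap is that you do not spell out the degenerate case $P^g=P_m$ in the triangle step, which forces $m=\ell^g$ directly.
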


We shall need the following version of Expander Crossing Lemma for bipartite graphs as found in \cite[Lemma~8]{large}, see also \cite[Theorem~5.1]{haemers}.
\begin{lemma}\label{lem_expander_e}
Let $G$ be a bipartite graph with parts $U$ and $V$ such that each vertex of $U$ has degree $a$ and each vertex of $V$ has degree $b$, and $X \subset U$ and $Y \subset V$. Let $A$ be the adjacency matrix of $G$ with eigenvalues $\lambda_1,\lambda_2,\ldots$ such that $|\lambda_1|\ge|\lambda_2|\ge\cdots$. Let $e(X,Y)$ be the number of edges between vertices in $X$ and $Y$. Then
\[
\left|e(X,Y)-\frac{\sqrt{ab}}{\sqrt{|U|\cdot|V|}}\cdot|X|\cdot|Y|\right|
\le |\lambda_3|\cdot \sqrt{|X|\cdot|Y|\cdot\left(1-\frac{|X|}{|U|}\right)\cdot\left(1-\frac{|Y|}{|V|}\right)}.
\]
\end{lemma}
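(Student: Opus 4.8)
The plan is to prove this spectrally, via the singular value decomposition of the biadjacency matrix, using biregularity to pin down the leading singular triple. Writing the adjacency matrix in block form
\[
A=\begin{pmatrix} 0 & B\\ B^{\top} & 0\end{pmatrix},
\]
where $B$ is the $|U|\times|V|$ biadjacency matrix (rows indexed by $U$, columns by $V$), the nonzero eigenvalues of $A$ are exactly $\pm\sigma_i$, where $\sigma_1\ge\sigma_2\ge\cdots$ are the singular values of $B$; since the bipartite spectrum is symmetric, the list of $|\lambda_j|$ in decreasing order reads $\sigma_1,\sigma_1,\sigma_2,\sigma_2,\ldots$, so that $|\lambda_1|=|\lambda_2|=\sigma_1$ and $|\lambda_3|=\sigma_2$. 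First I would identify the top triple: each row of $B$ sums to $a$ and each column to $b$, so $B^{\top}B$ is a nonnegative matrix with constant row sum $ab$, whence by Perron--Frobenius its spectral radius is $ab$, attained by the all-ones vector. Thus $\sigma_1=\sqrt{ab}$, with unit right and left singular vectors $\mathbf{v}_1=\mathbf{1}_V/\sqrt{|V|}$ and $\mathbf{u}_1=\mathbf{1}_U/\sqrt{|U|}$. I would then extend these to orthonormal bases $\{\mathbf{u}_i\}$ of $\mathbb{R}^{|U|}$ and $\{\mathbf{v}_i\}$ of $\mathbb{R}^{|V|}$ realizing the full decomposition $B=\sum_i\sigma_i\mathbf{u}_i\mathbf{v}_i^{\top}$ (setting $\sigma_i=0$ past the rank).

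Next I would expand the edge count. Letting $\mathbf{1}_X,\mathbf{1}_Y$ be the indicator vectors of $X\subseteq U$ and $Y\subseteq V$, one has $e(X,Y)=\mathbf{1}_X^{\top}B\,\mathbf{1}_Y=\sum_i\sigma_i\la\mathbf{1}_X,\mathbf{u}_i\ra\la\mathbf{1}_Y,\mathbf{v}_i\ra$. The $i=1$ term equals $\sqrt{ab}\cdot\frac{|X|}{\sqrt{|U|}}\cdot\frac{|Y|}{\sqrt{|V|}}$, which is precisely the main term $\frac{\sqrt{ab}}{\sqrt{|U|\cdot|V|}}|X|\,|Y|$, so subtracting it leaves the tail $\sum_{i\ge2}\sigma_i\la\mathbf{1}_X,\mathbf{u}_i\ra\la\mathbf{1}_Y,\mathbf{v}_i\ra$.

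Finally I would bound that tail. Since $\sigma_i\le\sigma_2=|\lambda_3|$ for all $i\ge2$, the Cauchy--Schwarz inequality gives
\[
\left|\sum_{i\ge2}\sigma_i\la\mathbf{1}_X,\mathbf{u}_i\ra\la\mathbf{1}_Y,\mathbf{v}_i\ra\right|
\le|\lambda_3|\Bigl(\sum_{i\ge2}\la\mathbf{1}_X,\mathbf{u}_i\ra^2\Bigr)^{1/2}\Bigl(\sum_{i\ge2}\la\mathbf{1}_Y,\mathbf{v}_i\ra^2\Bigr)^{1/2}.
\]
By Parseval, $\sum_i\la\mathbf{1}_X,\mathbf{u}_i\ra^2=\|\mathbf{1}_X\|^2=|X|$, so removing the $i=1$ contribution $\la\mathbf{1}_X,\mathbf{u}_1\ra^2=|X|^2/|U|$ yields $\sum_{i\ge2}\la\mathbf{1}_X,\mathbf{u}_i\ra^2=|X|(1-|X|/|U|)$, and likewise $\sum_{i\ge2}\la\mathbf{1}_Y,\mathbf{v}_i\ra^2=|Y|(1-|Y|/|V|)$. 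Substituting these into the bound produces exactly the claimed inequality.

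The one genuinely substantive point, the remainder being routine linear algebra, is the pair of facts underlying the correct constants: that the normalized all-ones vectors furnish the \emph{largest} singular triple of $B$, and that therefore $|\lambda_3|=\sigma_2$. Both rely on biregularity together with the edge-count identity $a|U|=b|V|$, and I would establish them (the former via the Perron--Frobenius argument above) before invoking Cauchy--Schwarz, since the value of the main-term coefficient $\sqrt{ab}/\sqrt{|U|\cdot|V|}$ and the factor $|\lambda_3|$ both hinge on this identification.
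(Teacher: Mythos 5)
Your proof is correct. Note, however, that the paper does not prove this lemma at all: it is imported verbatim from the literature (Lemma~8 of the De Winter--Schillewaert--Verstraete paper cited as \cite{large}, with Haemers' Theorem~5.1 in \cite{haemers} given as an alternative source), so there is no internal proof to compare against. Your argument is the standard spectral proof of the bipartite expander mixing lemma, and it is complete: the block structure of $A$ correctly yields $|\lambda_1|=|\lambda_2|=\sigma_1$ and $|\lambda_3|=\sigma_2$ (including the degenerate rank-one case, where both sides are $0$); the identification $\sigma_1=\sqrt{ab}$ with singular vectors $\mathbf{1}_U/\sqrt{|U|}$, $\mathbf{1}_V/\sqrt{|V|}$ does hinge, exactly as you flag, on the double count $a|U|=b|V|$ (without it the normalized all-ones vectors would not form a matched singular pair); and the Parseval--Cauchy--Schwarz estimate of the tail gives precisely the factors $|X|(1-|X|/|U|)$ and $|Y|(1-|Y|/|V|)$. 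The only stylistic difference from the cited sources is methodological: Haemers derives such bounds by eigenvalue interlacing applied to a quotient matrix of the partition $\{X, U\setminus X, Y, V\setminus Y\}$, whereas you work directly with the singular value decomposition of the biadjacency matrix; the two routes are essentially equivalent in content, but yours is self-contained, makes the role of the third eigenvalue transparent, and avoids having to set up the interlacing machinery.
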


The remaining part of this subsection is devoted to the proof of Theorem \ref{thm_l2g_upp}. For an element $g\in G$ such that $g^2\ne 1$, we define
\[
  \cP_2'(g):=\{P\in\cP_2(g):\,P,P^g,P^{g^{-1}}\textup{ are not collinear}\}.
\]
\begin{lemma}\label{lem_cP2_2lines}
Take an element $g\in G$ and a point $P\in\cP_2(g)$, and suppose that $g^2\ne 1$. Then $\ell_1:=PP^{g^{-1}}$, $\ell_2:=PP^g$ are all the lines of $\cL_1(g)\cup\cL_2(g)$ that pass through $P$. Moreover,
\begin{enumerate}
\item[(1)] if $P\not\in\cP_2'(g)$, then $\ell_1=\ell_2$ and lie in $\cL_1(g)$;
\item[(2)] if $P\in\cP_2'(g)$, then $\ell_1$ and $\ell_2$ are distinct and both lie in $\cL_2(g)$.
\end{enumerate}
\end{lemma}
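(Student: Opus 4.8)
The plan is to work entirely within the point-line incidence geometry, exploiting the single relation $\ell_1^g=\ell_2$ together with the absence of triangles in a generalized quadrangle. First I would record well-definedness: since $g\ne 1$ (forced by $g^2\ne 1$) and $G$ acts regularly, $P^g\ne P$ and $P^{g^{-1}}\ne P$; and since $P\in\cP_2(g)$ means $P\sim P^g$, applying the automorphism $g^{-1}$ gives $P^{g^{-1}}\sim P$. Hence $\ell_1=PP^{g^{-1}}$ and $\ell_2=PP^g$ are genuine lines, both incident with $P$. The one computation I rely on throughout is that $\ell_1^g$ is the line joining $P^g$ and $(P^{g^{-1}})^g=P$, so that $\ell_1^g=\ell_2$.

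Next I would show $\ell_1,\ell_2\in\cL_1(g)\cup\cL_2(g)$. The lines $\ell_1$ and $\ell_1^g=\ell_2$ both pass through $P$; if they are equal then $\ell_1^g=\ell_1$, so $\ell_1\in\cL_1(g)$, and if they are distinct then they are concurrent at $P$, so $\ell_1\in\cL_2(g)$. The same dichotomy applied to $\ell_2$ and $\ell_2^g$, both through $P^g$, places $\ell_2$ in $\cL_1(g)\cup\cL_2(g)$ as well.

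The main content is the claim that $\ell_1,\ell_2$ are the \emph{only} lines of $\cL_1(g)\cup\cL_2(g)$ through $P$. Suppose $m\ni P$ lies in this set but $m\notin\{\ell_1,\ell_2\}$. If $m^g=m$, then $P^g\in m^g=m$ and $P^{g^{-1}}\in m^{g^{-1}}=m$, forcing $m=PP^g=\ell_2$, a contradiction; so $m\ne m^g$, and membership in $\cL_2(g)$ means $m$ and $m^g$ meet in a unique point $Q$. The crux is to rule out every position of $Q$: if $Q=P$ then $P\in m^g$ yields $P^{g^{-1}}\in m$ and $m=\ell_1$; if $Q=P^g$ then $m=\ell_2$; and if $Q\notin\{P,P^g\}$ then $P,P^g,Q$ are pairwise collinear via the three distinct lines $\ell_2=PP^g$, $m=PQ$, $m^g=P^gQ$, a triangle, which no generalized quadrangle contains. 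Here I would verify distinctness of these three lines from $Q\notin\ell_2$, which holds because two distinct lines meet in at most one point and $m\cap\ell_2=\{P\}$. This contradiction finishes the uniqueness.

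Finally, parts (1) and (2) follow by observing that $P\in\cP_2'(g)$ is equivalent to $\ell_1\ne\ell_2$: the points $P,P^g,P^{g^{-1}}$ are collinear exactly when a common line contains all three, and such a line is necessarily both $\ell_2$ and $\ell_1$, i.e. $\ell_1=\ell_2$. If $\ell_1=\ell_2$ (the case $P\notin\cP_2'(g)$), then $\ell_1^g=\ell_1$ and the common line lies in $\cL_1(g)$. If $\ell_1\ne\ell_2$ (the case $P\in\cP_2'(g)$), the dichotomy above already forces $\ell_1\in\cL_2(g)$; moreover $\ell_2\notin\cL_1(g)$, since $\ell_2^g=\ell_2$ would give $P^{g^{-1}}\in\ell_2$ and hence $\ell_2=\ell_1$, so $\ell_2\in\cL_2(g)$ as well. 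The hardest step is the uniqueness argument, and within it the triangle-exclusion when $Q\notin\{P,P^g\}$; everything else is bookkeeping with the identity $\ell_1^g=\ell_2$.
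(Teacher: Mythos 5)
Your proof is correct and follows essentially the same route as the paper: both arguments hinge on the identity $\ell_1^g=\ell_2$, rule out a third fixed or concurrent line through $P$ by analyzing the intersection point $Q=m\cap m^g$ against $P$ and $P^g$, and invoke the absence of triangles in a generalized quadrangle to exclude the case $Q\notin\{P,P^g\}$. The only cosmetic difference is order of presentation (you establish membership of $\ell_1,\ell_2$ first and then uniqueness by contradiction, while the paper classifies an arbitrary line of $\cL_1(g)\cup\cL_2(g)$ through $P$ directly), so no substantive comparison is needed.
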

\begin{proof}
We observe that $P,P^g,P^{g^{-1}}$ are three distinct points, $P$ is collinear with $P^g$, and $\ell_2=\ell_1^g$. We consider two separate cases.
\begin{enumerate}
\item[(i)] Suppose that $\ell$ is a line of $\cL_1(g)$ that passes through $P$. Since $\ell$ is $g$-invariant and $P$ is on $\ell$, we deduce that $P,P^g,P^{g^{-1}}$ are all on $\ell$. This implies that $P\not\in \cP_2'(g)$ and $\ell=\ell_1=\ell_2$. In particular, we see that $\ell_1$ is in $\cL_1(g)$.
\item[(ii)] Suppose that $\ell$ is a line in $\cL_2(g)$ that passes through $P$. Write $Q:=\ell\cap\ell^g$. The point $P^g$ is incident with $\ell^g$ and collinear with $P$. The three points $P,Q,P^g$ are not pairwise distinct, since otherwise they would form a triangle. If $P=Q$, then  $P$ is on both $\ell$ and $\ell^g$, which implies that $\ell$ is incident with $P$ and $P^{g^{-1}}$, i.e., $\ell=\ell_1$. If $P^g=Q$, then similarly we deduce that $\ell=\ell_2$.
\end{enumerate}
This proves the first claim. If $P\not\in \cP_2'(g)$, then $P,P^g,P^{g^{-1}}$ are collinear and so $\ell_1=\ell_2$, i.e., $\ell_1$ is in $\cL_1(g)$. If $P\in\cP_2'(g)$, then $\ell_1$ and $\ell_2$ are distinct and intersect at $P$. It follows that $\ell_1$ is in $\cL_2(g)$. By applying $g$, we see that $\ell_2$ and $\ell_2^g$ are distinct and intersect at $P^g$, so $\ell_2$ is in $\cL_2(g)$. This completes the proof.
\end{proof}

\begin{lemma}\label{lem_cL2_2pts}
Take an element $g\in G$ and a line $\ell\in\cL_2(g)$, and suppose that $g^2\ne 1$. There are exactly two points of $\cP_2(g)$ incident with $\ell$, and they both lie in $\cP_2'(g)$.
\end{lemma}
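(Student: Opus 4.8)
The plan is to name the two points explicitly and then invoke Lemma \ref{lem_cP2_2lines} together with the triangle-free property of the quadrangle to rule out any others. I would set $Q:=\ell\cap\ell^g$ and $Q':=\ell\cap\ell^{g^{-1}}$; these are single well-defined points because $\ell\in\cL_2(g)$ forces $\ell\ne\ell^g$ with $\ell,\ell^g$ concurrent, and likewise $\ell\ne\ell^{g^{-1}}$ (since $\ell=\ell^{g^{-1}}$ would give $\ell^g=\ell$). First I would check $Q\in\cP_2'(g)$. As $Q$ lies on $\ell$ and on $\ell^g$, applying $g$ puts $Q^g$ on $\ell^g$, so $Q,Q^g$ are distinct points (distinct since $g\ne 1$) on the common line $\ell^g$; thus $Q\sim Q^g$ and $Q\in\cP_2(g)$. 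Applying $g^{-1}$ to $Q\in\ell^g$ shows $Q^{g^{-1}}\in\ell$, so $Q,Q^{g^{-1}}$ share the line $\ell$ while $Q,Q^g$ share the line $\ell^g$. If $Q,Q^g,Q^{g^{-1}}$ lay on one line $m$, then $Q,Q^{g^{-1}}$ would force $m=\ell$ and $Q,Q^g$ would force $m=\ell^g$, contradicting $\ell\ne\ell^g$; hence $Q\in\cP_2'(g)$. The symmetric argument under $g\mapsto g^{-1}$ yields $Q'\in\cP_2'(g)$, and in fact $Q'=Q^{g^{-1}}$ (intersection commutes with the automorphism $g^{-1}$).

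Next I would verify $Q\ne Q'$. If $Q=Q'$, then $Q$ lies on all three lines $\ell,\ell^g,\ell^{g^{-1}}$; combining $Q^{g^{-1}}\in\ell$ (from $Q\in\ell^g$) and $Q^g\in\ell$ (from $Q\in\ell^{g^{-1}}$) would place $Q,Q^g,Q^{g^{-1}}$ all on $\ell$, contradicting $Q\in\cP_2'(g)$. So $Q,Q'$ are two distinct points of $\cP_2'(g)\subseteq\cP_2(g)$ incident with $\ell$, which settles existence and membership in $\cP_2'(g)$ for these two.

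For exactness I would show that every $P\in\cP_2(g)$ on $\ell$ is one of $Q,Q'$. Note $\cL_1(g)$ and $\cL_2(g)$ are disjoint, a fixed line not being moved to a distinct concurrent one. If $P\notin\cP_2'(g)$, then by Lemma \ref{lem_cP2_2lines}(1) the unique line of $\cL_1(g)\cup\cL_2(g)$ through $P$ lies in $\cL_1(g)$; but $\ell\in\cL_2(g)$ passes through $P$, a contradiction. Hence $P\in\cP_2'(g)$, and by Lemma \ref{lem_cP2_2lines}(2) the only lines of $\cL_2(g)$ through $P$ are $PP^{g^{-1}}$ and $PP^g$, one of which must be $\ell$. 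If $\ell=PP^g$ then $P^g\in\ell$, so $P\in\ell^{g^{-1}}$ and $P=\ell\cap\ell^{g^{-1}}=Q'$; if $\ell=PP^{g^{-1}}$ then analogously $P=\ell\cap\ell^g=Q$. This gives exactly two points, both in $\cP_2'(g)$. I expect the uniqueness step to be the main obstacle: it is precisely where the absence of triangles in the quadrangle is used, packaged here through Lemma \ref{lem_cP2_2lines}. If one prefers not to route through that lemma, the alternative is a direct triangle argument, showing that a third point $P$ on $\ell$ with $P\sim P^g$ would make $\ell$, $\ell^g$, and $PP^g$ three pairwise distinct lines meeting in three distinct points, which is forbidden; the delicate part there is checking that the three intersection points are genuinely distinct so that the triangle-free axiom applies.
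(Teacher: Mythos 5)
Your proof is correct and follows essentially the same route as the paper: both identify the two points as $Q=\ell\cap\ell^g$ and $Q^{g^{-1}}=\ell\cap\ell^{g^{-1}}$ and verify membership in $\cP_2'(g)$ by noting that $Q,Q^g,Q^{g^{-1}}$ cannot be collinear since $\ell\ne\ell^g$. The only cosmetic difference is in the uniqueness step, where the paper argues directly that $P,Q,P^g$ cannot be pairwise distinct (else they form a triangle), while you route the same triangle-freeness through Lemma \ref{lem_cP2_2lines}; both are valid.
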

\begin{proof}
The line $\ell$ and $\ell^g$ are distinct, and we set $Q:=\ell\cap\ell^g$. We observe that $Q,Q^g,Q^{g^{-1}}$ are three distinct points, and $\ell=QQ^{g^{-1}}$, $\ell^g=QQ^g$. It follows that  $Q^g$, $Q$, $Q^{g^{-1}}$ are not collinear, so $Q$ is in $\cP_2'(g)$. Similarly, we deduce that $Q^{g^{-1}}$ is also in $\cP_2'(g)$.

Suppose that $P$ is a point  of $\cP_2(g)$ that is incident with $\ell$. The three points $P,Q,P^g$ can not be pairwise distinct, since otherwise they would form a triangle. It follows that $P$ is one of $Q,Q^{g^{-1}}$.  This completes the proof.
\end{proof}

\begin{proof}[Proof of Theorem \ref{thm_l2g_upp}] Take an element $g\in G$ such that $g^2\ne 1$. Let $\Gamma(\mathcal{S})$ be the point-line incidence graph of the quadrangle $\cS=(\cP,\cL)$. By Lemmas \ref{lem_cP2_2lines} and \ref{lem_cL2_2pts}, the induced subgraph of $\Gamma(\mathcal{S})$ on $\cP_2'(g)\cup\cL_2(g)$ is the union of disjoint cycles. It follows that $|\cP_2'(g)|=|\cL_2(g)|$, and there are $2\cdot|\cL_2(g)|$ edges between $\cP_2'(g)$ and $\cL_2(g)$. We apply Lemma \ref{lem_expander_e} to the graph $\Gamma(\mathcal{S})$ with $U=\cP$, $V=\cL$, $X=\cP_2'(g)$, $Y=\cL_2(g)$. The spectrum of the point graph of $\cS$ is available in the proof of \cite[1.2.2]{FGQ}, and the information about the $\lambda_i$'s can be deduced from it by linear algebra. We list the parameters as follows:  $|U|=(1+s)(1+st)$, $|V|=(1+t)(1+st)$, $a=t+1$, $b=s+1$, $\lambda_1=-\lambda_2=(t+1)(s+1)$, and $|\lambda_3|=\sqrt{s+t}$. If $|\cL_2(g)|=0$, the desired bound is trivial, so assume that $|\cL_2(g)|>0$. Since $|X|=|Y|=|\cL_2(g)|$ and $e(X,Y)=2\cdot|\cL_2(g)|$, we deduce that
\[
\left|2\cdot|\cL_2(g)|-\frac{|\cL_2(g)|^2}{1+st}\right|
<\sqrt{s+t}\cdot |\cL_2(g)|.
\]
Here we used the fact that $(1-\frac{|X|}{|U|})\cdot(1-\frac{|Y|}{|V|})<1$.   If $|\cL_2(g)|>0$, then it simplifies to $\left|2-\frac{|\cL_2(g)|}{1+st}\right|< \sqrt{s+t}$, so $|\cL_2(g)|< (1+st)(2+\sqrt{s+t})$. This completes the proof.
\end{proof}

\subsection{Proof of Theorem \ref{thm_regGQ}}

This whole section is devoted to the proof of Theorem \ref{thm_regGQ}. Suppose that $\cS$ is a generalized quadrangle of even order $s$ with a group $G$ of automorphisms acting regularly on points. By \cite[5.2.3]{FGQ}, the symplectic quadrangle $W(2)$ is the only generalized quadrangle of order $2$, and it does not admit a point regular group of automorphisms. Therefore, we assume that $s\ge 4$ in the sequel.  Since $s$ is even, $1+s$ and $1+s^2$ are  relatively prime odd integers. The group $G$ has order $(1+s)(1+s^2)$, so it is solvable.

\begin{lemma}\label{lem_Gbarorder_seqt}
The order of $G/G'$ divides either  $1+s$ or $1+s^2$.
\end{lemma}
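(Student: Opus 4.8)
The plan is to combine the difference set equation of Lemma~\ref{lem_Delta2} with an integrality analysis of the values of the linear characters of $G$ on $\Delta$. Write $H:=G/G'$ and $D:=\Delta\cup\{1\}$, and identify the linear characters of $G$ with the characters of $H$. Since $s$ is even, $1+s$ and $1+s^2$ are coprime odd integers; as $|H|$ divides $|G|=(1+s)(1+s^2)$, the abelian group $H$ splits as $H=P\times Q$ with $|P|$ dividing $1+s$ and $|Q|$ dividing $1+s^2$. Thus it suffices to show that $P=1$ or $Q=1$. Suppose not; then there are primes $p\mid 1+s$ and $q\mid 1+s^2$, both dividing $|H|$, and since $H$ is abelian it carries characters $\chi_p,\chi_q$ of orders exactly $p,q$ and a character $\chi_p\chi_q$ of order $pq$. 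I will derive a contradiction from this. Note that $p,q$ are odd and coprime to $s$.

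First I would record the basic character value. Applying a nonprincipal linear character $\chi$ to the identity $\Delta^2=(s^2-1)-2\Delta+(s+1)G$ of Lemma~\ref{lem_Delta2} (with $t=s$) and using $\chi(G)=0$ gives $\chi(\Delta)^2+2\chi(\Delta)-(s^2-1)=0$, so $\chi(\Delta)\in\{s-1,-s-1\}$ and hence $\chi(D)=\chi(\Delta)+1\in\{s,-s\}$; this recovers the relation $|\chi(D)|=s$ implicit in the difference set of the Remark following Lemma~\ref{lem_Delta2}. Next I would pin down the sign for characters of prime order. For $\chi$ of prime order $\ell$, writing $a_j:=|\{d\in D:\chi(d)=\zeta_\ell^j\}|$ and using that $1,\zeta_\ell,\dots,\zeta_\ell^{\ell-1}$ satisfy only the relation $\sum_j\zeta_\ell^j=0$, the condition $\chi(D)\in\Z$ forces $a_1=\dots=a_{\ell-1}=:a$ and $\chi(D)=a_0-a$; summing $\sum_j a_j=|D|=s^2+s+1$ then yields $\ell\,a=s^2+s+1-\chi(D)$. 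For $\ell=p\mid 1+s$ the value $\chi(D)=s$ would give $p\mid s^2+1$, impossible since $\gcd(1+s,1+s^2)=1$; hence $\chi_p(D)=-s$. Symmetrically, $\chi(D)=-s$ for $\ell=q\mid 1+s^2$ would give $q\mid(s+1)^2$, impossible; hence $\chi_q(D)=s$.

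The heart of the argument, which I expect to be the main obstacle, is to control the mixed character $\chi_p\chi_q$ of order $pq$. Set $\epsilon s:=(\chi_p\chi_q)(D)\in\{s,-s\}$. Since this value is rational, it is fixed by $\Gal(\mathbb{Q}(\zeta_{pq})/\mathbb{Q})$, and raising $\chi_p\chi_q$ to powers coprime to $pq$ shows that $(\chi_p^i\chi_q^j)(D)=\epsilon s$ for every $i\not\equiv 0\pmod p$ and $j\not\equiv 0\pmod q$. Now I would sum over the cyclic group generated by $\chi_q$, using orthogonality of $\chi_q$:
\[
\sum_{j=0}^{q-1}(\chi_p\chi_q^j)(D)=q\cdot\chi_p\bigl(D\cap\ker\chi_q\bigr).
\]
The left side equals $\chi_p(D)+(q-1)\epsilon s=-s+(q-1)\epsilon s$, while the right side lies in $q\,\Z[\zeta_p]$; since the left side is a rational integer, it lies in $q\Z$, so reducing modulo $q$ and using $\gcd(q,s)=1$ gives $q\mid(1+\epsilon)$, forcing $\epsilon=-1$ because $q\ge 3$.

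Finally, the symmetric sum over the cyclic group generated by $\chi_p$ gives
\[
\sum_{i=0}^{p-1}(\chi_p^i\chi_q)(D)=p\cdot\chi_q\bigl(D\cap\ker\chi_p\bigr),
\]
whose left side is $\chi_q(D)+(p-1)\epsilon s=s+(p-1)\epsilon s$; the same reasoning yields $p\mid(1-\epsilon)$, forcing $\epsilon=+1$ because $p\ge 3$. These two conclusions are incompatible, which is the desired contradiction. Hence $P=1$ or $Q=1$, and the order of $G/G'$ divides $1+s$ or $1+s^2$. The only nonroutine points to watch are the Galois-conjugacy bookkeeping that equalizes the values $(\chi_p^i\chi_q^j)(D)$ and the passage from an algebraic-integer identity to a divisibility over $\Z$; everything else is elementary.
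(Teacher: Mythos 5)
Your proof is correct and follows essentially the same strategy as the paper: apply nonprincipal linear characters to the identity of Lemma~\ref{lem_Delta2} to get $\chi(\Delta)\in\{s-1,-s-1\}$, pin down the sign for characters of prime order $r$ according as $r\mid 1+s$ or $r\mid 1+s^2$, and force a contradiction from a character of order $pq$. The only difference is the arithmetic device used at the last step --- the paper reduces modulo prime ideals above $r_1,r_2$ and uses the congruence $\chi(\Delta)^{r_i}\equiv\chi^{r_i}(\Delta)\pmod{\mathfrak{P}_i}$, whereas you use Galois conjugacy to equalize the values $(\chi_p^i\chi_q^j)(D)$ and then orthogonality sums over $\langle\chi_p\rangle$ and $\langle\chi_q\rangle$; both are valid.
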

\begin{proof}
Let $\chi$ be a nonprincipal linear character of $G$. Then $\chi(G)=0$. We apply $\chi$ to the equation in Lemma \ref{lem_Delta2} to obtain $\chi(\Delta)^2=s^2-1-2\chi(\Delta)$. It follows that $\chi(\Delta)$ is either $s-1$ or $-s-1$. Now suppose that $\chi$ has prime order $r$.  We claim that $\chi(\Delta)=-s-1$ or $s-1$ according as $r$ divides $1+s$ or $1+s^2$. Let $\omega$ be a complex primitive $r$-th root of unity,  $R$ be the algebraic integer ring of $\bQ(\omega)$, and $\mathfrak{P}$ be the prime ideal $(\omega-1)R$ lying over $r$. Since $|\Delta|=s(1+s)$, $\chi(\Delta)$ is the sum of $s^2+s$ complex $r$-th roots of unity. We deduce that $\chi(\Delta)\equiv s^2+s\pmod{\mathfrak{P}}$, from which the claim follows.

Suppose that there are odd prime divisors $r_1,r_2$  of $|G/G'|$ such that $r_1$ divides $1+s$ and $r_2$ divides $1+s^2$. Then $G$ has a linear character $\chi$ of order $r_1r_2$. We have $\chi(\Delta)=\epsilon s-1$ for some $\epsilon=\pm 1$, and $\chi^{r_1}(\Delta)=s-1$,  $\chi^{r_2}(\Delta)=-s-1$ by the previous paragraph. Let $\omega$ be a complex primitive $r_1r_2$-nd root of unity,  $R$ be the algebraic integer ring of $\bQ(\omega)$, and $\mathfrak{P}_i$ be a prime ideal of $R$ lying over $r_i$ for $i=1,2$. We have $\chi(\Delta)^{r_i}\equiv \chi^{r_i}(\Delta)\pmod{\mathfrak{P}_i}$ for each $i$, so $(\epsilon s-1)^{r_1}\equiv s-1\pmod{r_1}$,  $(\epsilon s-1)^{r_2}\equiv -s-1\pmod{r_2}$. Since $s$ is relatively prime to $|G|$, we derive the contradiction that $\epsilon=1$ and $\epsilon=-1$ both hold.  This completes the proof.
\end{proof}

\begin{proposition}\label{prop_gGcapDel}
Take notation as above. For $g\in G\setminus\{1\}$, we have $|g^G\cap\Delta|\ge 1$, $|g^G\cap\Delta^c|\equiv s\pmod{2s}$. In particular, $|g^G\cap\Delta|\equiv |g^G|\pmod{s}$ and $|g^G|\ge 1+s$.
\end{proposition}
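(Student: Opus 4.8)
The plan is to derive everything from Lemma~\ref{lem_Y_Benson} specialized to the square case $s=t$, after first establishing one arithmetic fact that drives the whole argument: that $|C_G(g)|$ is coprime to $2s$. Since $s$ is even, $|G|=(1+s)(1+s^2)$ is a product of two odd integers, and $\gcd(s,1+s)=\gcd(s,1+s^2)=1$, so $s$ is coprime to $|G|$. As $|C_G(g)|$ divides $|G|$, it is odd and coprime to $s$, hence coprime to $2s$. This coprimality is precisely where the even-order hypothesis enters, and it is what will let us cancel the centralizer factor from the congruences.

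Next I would rewrite the two congruences of Lemma~\ref{lem_Y_Benson} with $t=s$ and $s+t=2s$. Using $s^2\equiv 0\pmod{2s}$, one has $(s+1)(t+1)=(s+1)^2\equiv 1\pmod{2s}$ and $t(s^2-1)=s(s^2-1)\equiv s\pmod{2s}$, so that
\begin{align*}
|C_G(g)|\cdot|g^G\cap\Delta|&\equiv 1\pmod{2s},\\
|C_G(g)|\cdot|g^G\cap\Delta^c|&\equiv s\pmod{2s}.
\end{align*}
The first congruence immediately yields $|g^G\cap\Delta|\ge 1$: were the intersection empty, the left side would equal $0$, contradicting its being $\equiv 1\pmod{2s}$.

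For the assertion $|g^G\cap\Delta^c|\equiv s\pmod{2s}$ I would cancel $|C_G(g)|$ in two stages. Reducing the second congruence modulo $s$ gives $|C_G(g)|\cdot|g^G\cap\Delta^c|\equiv 0\pmod s$, and since $\gcd(|C_G(g)|,s)=1$ we obtain $s\mid|g^G\cap\Delta^c|$; write $|g^G\cap\Delta^c|=ks$. Substituting back into the congruence modulo $2s$ gives $s(|C_G(g)|k-1)\equiv 0\pmod{2s}$, i.e.\ $|C_G(g)|k$ is odd, and as $|C_G(g)|$ is odd this forces $k$ to be odd. Hence $|g^G\cap\Delta^c|\equiv s\pmod{2s}$.

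Finally the two ``in particular'' clauses drop out. Because $g\ne 1$ we have the disjoint union $g^G=(g^G\cap\Delta)\sqcup(g^G\cap\Delta^c)$, and $|g^G\cap\Delta^c|\equiv s\equiv 0\pmod s$ gives $|g^G\cap\Delta|\equiv|g^G|\pmod s$. Moreover $|g^G\cap\Delta^c|\equiv s\pmod{2s}$ with $0<s<2s$ forces $|g^G\cap\Delta^c|\ge s$, which together with $|g^G\cap\Delta|\ge 1$ yields $|g^G|\ge 1+s$. The only genuinely delicate point is the two-stage cancellation of $|C_G(g)|$ modulo $2s$: cancelling modulo $s$ is routine, but pinning down the residue modulo $2s$ requires the parity argument on $k$, which itself rests on the coprimality of $|C_G(g)|$ with $2s$ secured in the first paragraph.
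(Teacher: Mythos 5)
Your proof is correct and follows essentially the same route as the paper: both arguments specialize the congruences of Lemma~\ref{lem_Y_Benson} to $s=t$ and exploit the fact that $s$ is coprime to the odd number $|G|$ to remove the factor $|C_G(g)|$. The only cosmetic difference is that the paper cancels $|C_G(g)|$ by multiplying the first congruence by its explicit inverse $(1+s)|g^G|$ modulo $2s$, whereas you work with the second congruence and cancel in two stages (modulo $s$, then a parity check); both are valid.
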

\begin{proof}
By Lemma \ref{lem_Y_Benson}, we have $|g^G\cap\Delta|\cdot|C_G(g)|\equiv 1\pmod{2s}$. It follows that $|g^G\cap\Delta|\ne 0$. We multiply both sides by $(1+s)\cdot|g^G|$ to deduce that $|g^G\cap\Delta|\equiv |g^G|(1+s)\pmod{2s}$. Since $|g^G|$ divides $|G|$ which is odd, we deduce that  $|g^G\cap\Delta^c|\equiv s\pmod{2s}$. In particular, $|g^G\cap\Delta^c|$ is at least $s$. This completes the proof.
\end{proof}

\begin{lemma}\label{lem_Greg_coll}
Take notation as above. Then the following hold.
\begin{enumerate}
\item[(1)] The Fitting subgroup $F(G)$ is a $p$-group with $p$ a prime, $|F(G)|\ge s+1$.
\item[(2)] If $N$ is an elementary abelian normal $p$-subgroup of $G$, then $|N|$ divides $1+s^2$.
\item[(3)] Each nontrivial normal subgroup $M$ of $G$ has order at least $2s+3$.
\end{enumerate}
Moreover, we have $p\equiv 1\pmod{4}$, where $p$ is as in (1).
\end{lemma}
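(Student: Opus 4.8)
The plan is to extract everything from one recurring consequence of Proposition \ref{prop_gGcapDel}: since every nontrivial conjugacy class of $G$ has size at least $s+1$, an abelian normal subgroup $A\trianglelefteq G$ must satisfy $|A|\le 1+s^2$. Indeed, for $1\ne a\in A$ we have $A\le C_G(a)$, so $s+1\le |a^G|=[G:C_G(a)]\le [G:A]=|G|/|A|$, and $|G|=(1+s)(1+s^2)$ gives $|A|\le 1+s^2$. In the same spirit, any nontrivial normal subgroup $M$ is the disjoint union of $\{1\}$ with a family of nontrivial $G$-classes, each of size at least $s+1$. These two observations drive the whole argument.

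I would prove (3) first, as it is the cleanest and uses no extra machinery. Write $M\setminus\{1\}$ as a union of nontrivial $G$-classes. Since $|M|$ is odd (it divides the odd number $|G|$), $M$ cannot consist of $\{1\}$ together with a single class $C$: otherwise $|C|=|M|-1$ would be even, yet $|C|=[G:C_G(g)]$ divides the odd number $|G|$, which is impossible. Hence $M\setminus\{1\}$ is a union of at least two classes, each of size at least $s+1$, so $|M|\ge 1+2(s+1)=2s+3$.

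For (2), let $N$ be a nontrivial elementary abelian normal $p$-subgroup. Since $\gcd(1+s,1+s^2)=1$ and $|N|=p^k$ divides $|G|$, it suffices to prove $p\mid 1+s^2$, equivalently $p\nmid 1+s$; then $p^k$ is coprime to $1+s$ and the assertion $p^k\mid 1+s^2$ follows. Suppose for contradiction that $p\mid 1+s$. I would first treat $|N|=p$: here $\bar G:=G/C_G(N)$ embeds in $\Aut(N)\cong\Z/(p-1)$, so the $G$-orbit of any $a\in N\setminus\{1\}$, which coincides with the class $a^G$, has size dividing $p-1\le s$; but Proposition \ref{prop_gGcapDel} forces $|a^G|\ge s+1$, a contradiction. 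For general $k$ one passes to a minimal normal subgroup $N_0\le N$, on which $\bar G=G/C_G(N_0)$ acts irreducibly as a solvable linear group of odd order subject to $p^{\dim N_0}\le 1+s^2$; here I would invoke the \emph{theory of solvable linear groups} \cite{suprk} to control the $\bar G$-orbits on $N_0\setminus\{1\}$ and again reach a contradiction with the lower bound of Proposition \ref{prop_gGcapDel}. The ``moreover'' is then immediate: $p\mid 1+s^2$ means $s^2\equiv -1\pmod p$, so $-1$ is a quadratic residue modulo the odd prime $p$, whence $p\equiv 1\pmod 4$.

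Finally, for (1) I would use that the Fitting subgroup of the solvable group $G$ is self-centralizing, $C_G(F(G))\le F(G)$, so that $C_G(F(G))=Z(F(G))$ is abelian normal of order at most $1+s^2$ and $G/Z(F(G))$ embeds in $\Aut(F(G))$. Every prime divisor $q$ of $|F(G)|$ has $O_q(G)\ne 1$, hence $O_q(G)$ contains an elementary abelian minimal normal subgroup of $G$, so by (2) we get $q\mid 1+s^2$. To conclude that $F(G)$ is a $p$-group and that $|F(G)|\ge s+1$, I would analyze the faithful action of $G/Z(F(G))$ on the chief factors of $F(G)$ through the theory of solvable linear groups \cite{suprk}, combined with $|Z(F(G))|\le 1+s^2$ and $|G|=(1+s)(1+s^2)$. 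The main obstacle is precisely this last analysis: the elementary class-size bound only produces \emph{upper} bounds on abelian normal subgroups, so the single-prime conclusion, the lower bound $|F(G)|\ge s+1$, and the general-$k$ case of (2) all require the sharp orbit and order estimates for solvable linear groups rather than a crude $|\Aut(F(G))|$ bound, which is too weak to close the argument.
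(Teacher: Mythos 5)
Your proof of part (3) is correct and takes a genuinely different route from the paper's: the paper observes (via the argument of \cite[Lemma~3.6]{Swreg}) that $g$ and $g^{-1}$ are not conjugate and concludes $|M|\ge 1+|g^G|+|(g^{-1})^G|\ge 2s+3$, whereas you extract ``at least two classes'' from the parity clash between $|M|-1$ (even, since $|M|$ divides the odd number $|G|$) and a class size (odd, as a divisor of $|G|$); both arguments are valid. Your derivation of $p\equiv 1\pmod 4$ matches the paper's. For (1) and (2), however, the paper simply cites \cite[Theorems~4.2,~3.5]{Greg}, while you attempt self-contained proofs and --- as you concede in your closing paragraph --- leave the decisive steps unproved: the general-$k$ case of (2), the single-prime conclusion in (1), and the bound $|F(G)|\ge s+1$ are all deferred to an unspecified appeal to \cite{suprk}. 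As written these are genuine gaps: ``invoke the theory of solvable linear groups to control the orbits'' names no estimate and gives no reason why the orbit lower bound $s+1$ would be contradicted.

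The frustrating part is that every one of these gaps closes with tools already in your first paragraph, so no linear group theory is needed at all. For (2): since $s$ is even, $\gcd(1+s,1+s^2)=1$, so if $p\mid 1+s$ then the full $p$-part of $|G|$ divides $1+s$ and hence $|N|=p^k\le 1+s$; but $N\setminus\{1\}$ contains a full conjugacy class, of size at least $s+1$ by Proposition~\ref{prop_gGcapDel}, forcing $|N|\ge s+2$ --- a contradiction, with no reduction to a minimal normal subgroup required. For the single-prime claim in (1): if $O_{q_1}(G)$ and $O_{q_2}(G)$ were both nontrivial for distinct primes $q_1,q_2$, then $Z(O_{q_1}(G))\times Z(O_{q_2}(G))$ would be an abelian normal subgroup of order at least $(2s+3)^2$ by your part (3), exceeding the bound $1+s^2$ you established for abelian normal subgroups at the outset. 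Finally, $|F(G)|\ge s+1$ is immediate: $F(G)$ is a nontrivial normal subgroup of the nontrivial solvable group $G$, so $|F(G)|\ge 2s+3>s+1$ by (3). I would either supply these short arguments or, as the paper does, cite Ghinelli directly rather than gesture at \cite{suprk}.
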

\begin{proof}
The first two claims are respectively \cite[Theorems~4.2,~3.5]{Greg} for even $s$. For (3), we take an element $g\in M\setminus\{1\}$. By the same argument as in the proof of \cite[Lemma~3.6]{Swreg}, $g$ and $g^{-1}$ are not conjugate in $G$. It follows that $|M|\ge 1+|g^G|+|(g^{-1})^G|\ge 2s+3$ by Proposition \ref{prop_gGcapDel}. Finally, let $p$ be as in the statement of (1). The elements of order $p$ in the center of $F(G)$ generate a nontrivial elementary abelian normal $p$-subgroup $N_1$ of $G$, so $p$ divides $1+s^2$. Since $s^2\equiv -1\pmod{p}$, $-1$ is a quadratic residue modulo $p$ and so $p\equiv 1\pmod{4}$. This completes the proof.
\end{proof}

The Fitting subgroup $F(G)$ is a $p$-group with $p\equiv 1\pmod{4}$ by Lemma \ref{lem_Greg_coll}. Among all nontrivial elementary abelian normal subgroups of $G$, we choose one of the smallest order, say, $N$. By the choice of $N$, it is a minimal normal subgroup of $G$.  It is nilpotent and thus contained in $F(G)$, so $|N|=p^d$ for an integer $d$. By Lemma \ref{lem_Greg_coll}, $p^d$ divides $1+s^2$. Set
\begin{equation}\label{eqn_Hdef}
   H:=G/C_G(N).
\end{equation}
The group $G$ act on $N$ via conjugation, and this induces a natural faithful action of $H$ on $N$. We regard $N$ as a vector space of dimension $d$ over $\F_p$, so that $H$ embeds as a subgroup of $\GL_d(p)$. Under such an embedding, the $H$-orbit of $g\in N$ corresponds to the conjugacy class $g^G$, and an $H$-submodule of $N$ corresponds to a normal subgroup of $G$ that is contained in $N$. Since $N$ is minimal normal in $G$, we deduce that $H$ is irreducible on $N$. We summarize the properties of $N$ and $H$ in the next lemma.

\begin{lemma}\label{lem_Hprop}
Take notation as above. The group $H$ is irreducible on $N$, and each $H$-orbit on $N\setminus\{1\}$ has size at least $1+s$. Moreover, we have $|N|\ge 2s+3$, $|H|\ge 1+s>|N|^{1/2}$, and $|C_G(N)|\le 1+s^2$.
\end{lemma}
\begin{proof}
By Lemma  \ref{lem_Greg_coll} and the arguments preceding this lemma, it remains to establish the last two inequalities.  Since $|H|$ is at least as large as its orbit sizes, we deduce that $|H|\ge 1+s$. It follows that $|C_G(N)|=\frac{|G|}{|H|}\le 1+s^2$. Since $|N|$ divides $1+s^2$, we have $|N|^{1/2}\le\sqrt{1+s^2}<1+s$. This completes the proof.
\end{proof}

\begin{lemma}\label{lem_N_minn}
The group $N$ is the unique minimal normal subgroup of $G$, and it is contained in each nontrivial normal subgroup of $G$.
\end{lemma}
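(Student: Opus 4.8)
The plan is to first establish that $N$ is the unique minimal normal subgroup of $G$, and then to deduce the containment statement as an immediate corollary. The starting observation is that, since $G$ is solvable, every minimal normal subgroup of $G$ is elementary abelian and hence a $q$-group for some prime $q$; being nilpotent, any such subgroup lies in the Fitting subgroup $F(G)$, which by Lemma \ref{lem_Greg_coll}(1) is a $p$-group. Thus every minimal normal subgroup of $G$ is an elementary abelian normal $p$-subgroup, for the single prime $p$ fixed earlier.

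For uniqueness I would argue by contradiction. Suppose $M$ is a minimal normal subgroup of $G$ with $M\ne N$. Minimality of both forces $M\cap N=1$, and since $M$ and $N$ are both normal we have $[M,N]\le M\cap N=1$, so $M$ and $N$ commute element-wise. Hence the product $MN$ is an internal direct product $M\times N$, and being generated by two elementary abelian $p$-groups it is itself an elementary abelian normal $p$-subgroup of $G$ of order $|M|\cdot|N|$. By Lemma \ref{lem_Greg_coll}(3) each of $|M|$ and $|N|$ is at least $2s+3$, so $|MN|\ge(2s+3)^2>1+s^2$. This contradicts Lemma \ref{lem_Greg_coll}(2), which bounds the order of any elementary abelian normal $p$-subgroup by $1+s^2$. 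Therefore $N$ is the only minimal normal subgroup of $G$.

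For the containment statement, let $M$ be any nontrivial normal subgroup of $G$. Among the nontrivial normal subgroups of $G$ contained in $M$ (a nonempty collection, since it contains $M$ itself), choose one of smallest order, say $M_0$; then $M_0$ is a minimal normal subgroup of $G$, because any nontrivial normal subgroup of $G$ properly contained in $M_0$ would also be contained in $M$ and contradict the choice of $M_0$. By the uniqueness just established, $M_0=N$, whence $N=M_0\le M$.

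The argument is short and the individual steps are routine; the only point requiring genuine care is verifying that the product of two distinct minimal normal subgroups is again an elementary abelian $p$-group of order $|M|\cdot|N|$, so that the divisibility constraint of Lemma \ref{lem_Greg_coll}(2) can be brought to bear against the lower bound of Lemma \ref{lem_Greg_coll}(3). I expect no serious obstacle beyond correctly combining these two numerical bounds, since $(2s+3)^2$ comfortably exceeds $1+s^2$ for all $s\ge 4$.
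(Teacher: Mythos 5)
Your proof is correct. It differs from the paper's in a small but genuine way: the paper proves the containment statement directly for an arbitrary nontrivial normal subgroup $M$ not containing $N$, observing that $M\cap N=1$ forces $M\le C_G(N)$ and then that $MN\le C_G(N)$ has order at least $(2s+3)^2>1+s^2$, contradicting the bound $|C_G(N)|\le 1+s^2$ from Lemma \ref{lem_Hprop}; the uniqueness of the minimal normal subgroup is then a one-line consequence. You instead prove uniqueness first, which requires the extra (correct) observation that every minimal normal subgroup of the solvable group $G$ is elementary abelian and lies in the $p$-group $F(G)$, so that $M\times N$ is an elementary abelian normal $p$-subgroup to which Lemma \ref{lem_Greg_coll}(2) applies; you then recover the containment by a standard argument. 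Both proofs hinge on the same numerical clash between the lower bound of Lemma \ref{lem_Greg_coll}(3) and an upper bound of $1+s^2$. The paper's route is marginally shorter and handles non-minimal $M$ in one stroke; yours has the mild advantage of resting only on Lemma \ref{lem_Greg_coll} rather than also on the derived bound $|C_G(N)|\le 1+s^2$, at the cost of the preliminary reduction to elementary abelian $p$-groups. Either argument is acceptable.
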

\begin{proof}
It suffices to prove the second claim, since the first claim follows from it.  Suppose that $M$ is a nontrivial normal subgroup of $G$ that does not contain $N$. Then $M\cap N=1$ by the minimality of $N$. It follows that $[M,N]\le M\cap N=1$, i.e., $M\le C_G(N)$. The group $C_G(N)$ contains $MN$ which has order  $|N|\cdot |M|\ge (2s+3)^2>1+s^2$ by Lemma \ref{lem_Greg_coll} (3). It contradicts the fact $|C_G(N)|\le 1+s^2$ in Lemma \ref{lem_Hprop}. This completes the proof.
\end{proof}

\begin{corollary}\label{cor_1dim_FinK}
The group $F(G)$ is contained in  $C_G(N)$.
\end{corollary}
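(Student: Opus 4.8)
The plan is to prove the equivalent statement that $N$ is contained in the center of $F(G)$. The whole argument rests on the fact that $F(G)$ and $N$ are $p$-groups for the same prime $p$, combined with the extremal position of $N$ recorded in Lemma \ref{lem_N_minn}.

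First I would use the observation already made before Lemma \ref{lem_Hprop} that $N$, being a nontrivial nilpotent normal subgroup of $G$, lies inside the Fitting subgroup $F(G)$; in particular $N$ is a nontrivial normal subgroup of the $p$-group $F(G)$. The key step is the standard fact that a nontrivial normal subgroup of a finite $p$-group meets its center nontrivially, which yields $N\cap Z(F(G))\ne 1$. Next I would note that $Z(F(G))$ is characteristic in $F(G)$ and $F(G)$ is normal in $G$, so $Z(F(G))$ is normal in $G$; hence $N\cap Z(F(G))$ is a nontrivial normal subgroup of $G$ contained in $N$. By the minimality of $N$ (equivalently, by Lemma \ref{lem_N_minn}, since $N$ is contained in every nontrivial normal subgroup of $G$) this forces $N\cap Z(F(G))=N$, that is, $N\le Z(F(G))$. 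Unwinding the definition of the center, every element of $F(G)$ commutes with every element of $N$, which is exactly $F(G)\le C_G(N)$.

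I do not expect a genuine obstacle here: the corollary is a soft consequence of the $p$-group structure of $F(G)$ and of the uniqueness of the minimal normal subgroup $N$. The only point needing care is confirming that $Z(F(G))$ is normal in $G$, so that $N\cap Z(F(G))$ is a legitimate normal subgroup to which the minimality of $N$ may be applied; this is immediate from the fact that the center is characteristic in the normal subgroup $F(G)$.
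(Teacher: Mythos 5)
Your proof is correct and follows essentially the same route as the paper: both arguments pivot on the observation that $Z(F(G))$ is a nontrivial normal subgroup of $G$ (nontrivial because $F(G)$ is a nontrivial $p$-group) and hence contains $N$ by Lemma \ref{lem_N_minn}. Your extra step of first intersecting $N$ with $Z(F(G))$ and then invoking minimality is a harmless variant of directly applying Lemma \ref{lem_N_minn} to $Z(F(G))$.
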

\begin{proof}
Since $F(G)$ is a $p$-group, its center $Z(F(G))$ is a nontrivial abelian normal subgroup of $G$. By Lemma \ref{lem_N_minn}, $Z(F(G))$ contains $N$. The claim then follows.
\end{proof}

\begin{lemma}\label{lem_1dim_abelian}
Each abelian normal subgroup of $G$ is an elementary abelian $p$-group.
\end{lemma}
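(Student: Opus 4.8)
The plan is to show that any nontrivial abelian normal subgroup $A$ of $G$ is a $p$-group so small that it cannot contain an element of order $p^2$, forcing it to be elementary abelian.

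First I would observe that $A$, being abelian and normal, is nilpotent and hence contained in the Fitting subgroup $F(G)$; since $F(G)$ is a $p$-group by Lemma \ref{lem_Greg_coll}(1), $A$ is an abelian $p$-group. We may assume $A\ne 1$, as the claim is trivial otherwise, so $N\le A$ by Lemma \ref{lem_N_minn}. Because $A$ is abelian and contains $N$, it centralizes $N$, i.e.\ $A\le C_G(N)$, and therefore $|A|\le 1+s^2$ by Lemma \ref{lem_Hprop}. This order bound, together with $|N|\ge 2s+3$ from the same lemma, is the engine of the proof: one has $|N|^2\ge (2s+3)^2>1+s^2\ge |A|$.

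The heart of the argument is the $p$-th power endomorphism $\phi\colon A\to A$, $a\mapsto a^p$, which is a homomorphism since $A$ is abelian. Its kernel is $\Omega_1(A)$, the subgroup of elements of order dividing $p$, and its image is $A^p$, so the first isomorphism theorem yields the factorization $|A|=|\Omega_1(A)|\cdot|A^p|$. Both $\Omega_1(A)$ and $A^p$ are characteristic in $A$ and hence normal in $G$. Since $N$ is elementary abelian and lies in $A$, every element of $N$ has order dividing $p$, so $N\le \Omega_1(A)$. Now suppose, for contradiction, that $A$ is not elementary abelian; then $A^p\ne 1$ is a nontrivial normal subgroup of $G$, so $N\le A^p$ by Lemma \ref{lem_N_minn} as well. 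Combining these containments gives $|A|=|\Omega_1(A)|\cdot|A^p|\ge |N|^2>1+s^2$, contradicting $|A|\le 1+s^2$. Hence $A=\Omega_1(A)$ is elementary abelian, completing the proof.

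The only genuinely delicate point is securing the bound $|A|\le 1+s^2$, which rests on the inclusion $A\le C_G(N)$ and the estimate $|C_G(N)|\le 1+s^2$ of Lemma \ref{lem_Hprop}; I expect this to be the one place where the geometry (through $N$ and $C_G(N)$) really enters. Everything after that is the purely group-theoretic observation that both the "bottom layer" $\Omega_1(A)$ and the image $A^p$ must each contain the unique minimal normal subgroup $N$, whose square already exceeds the available order. I anticipate no serious obstacle beyond keeping careful track of which normal subgroups are guaranteed to contain $N$.
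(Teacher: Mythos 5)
Your proof is correct and follows essentially the same strategy as the paper's: both arguments hinge on producing two normal subgroups of $G$ inside the abelian subgroup, each forced to contain $N$ by Lemma \ref{lem_N_minn}, so that $|A|\ge |N|^2\ge (2s+3)^2$ contradicts $|C_G(N)|\le 1+s^2$. Your variations (using $A\le F(G)$ to get the $p$-group property rather than the paper's $O_{p'}(L)$ argument, and the factorization $|A|=|\Omega_1(A)|\cdot|A^p|$ from the $p$-th power map rather than the paper's $p^{e-1}$-th power subgroup) are minor and, if anything, slightly cleaner.
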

\begin{proof}
Let  $L$ be a nontrivial abelian normal subgroup of $G$. We have $N\le L$ by Lemma \ref{lem_N_minn}, so $L\le C_G(N)$. If $L$ is not a $p$-group, then $O_{p'}(L)$ is a  nontrivial normal subgroup of $G$ which does not contain $N$. This contradicts Lemma \ref{lem_N_minn}, so $L$ is a $p$-group.

It remains to show that $L$ is elementary abelian. Suppose to the contrary that $L$ has exponent $p^e$ with $e\ge 2$. We have $L=C_{p^{e}}^{r_e}\times\cdots\times C_{p}^{r_1}$ for some nonnegative integers $r_1,\ldots,r_e$, where $r_e>0$ and $C_{p^i}$ is a cyclic subgroup of order $p^i$ for each $i$. Then $L_1:=\{g^{p^{e-1}}:\,g\in L\}$ is a characteristic subgroup of $L$ of order $p^{r_e}$. Since it is normal in $G$, it contains $N$ by Lemma \ref{lem_N_minn}. It follows that $|L|\ge |L_1|^e\ge |N|^2\ge (2s+3)^2$, which contradicts the fact $|C_G(N)|\le 1+s^2$ in Lemma \ref{lem_Hprop}. This completes the proof.
\end{proof}

Let $H$ be as in \eqref{eqn_Hdef}, so that $N$ is an irreducible $H$-module. We say that $N$ is an imprimitive $H$-module if $H$ preserves a decomposition
\begin{equation}\label{eqn_impr_D}
\cD:\,N=N_1\oplus \cdots\oplus N_t, \quad t\ge 2,
\end{equation}
where each $N_i$ is an $\F_p$-subspace of the same size $p^m$. That is, each element of $H$ induces a permutation of the set $\{N_1,\ldots,N_t\}$. Otherwise, we say that $N$ is primitive.
\begin{lemma}\label{lem_reg_impr}
The $H$-module $N$ is primitive.
\end{lemma}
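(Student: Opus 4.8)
The plan is to argue by contradiction. Suppose $H$ preserves a decomposition $\cD:\,N=N_1\oplus\cdots\oplus N_t$ as in \eqref{eqn_impr_D}, with $t\ge 2$ blocks each of order $q:=p^m$, so $|N|=q^t$. First I would note that the permutation action of $H$ on $\{N_1,\ldots,N_t\}$ is transitive: the sum of the blocks lying in the $H$-orbit of $N_1$ is a nonzero $H$-submodule, hence all of $N$ by the irreducibility in Lemma \ref{lem_Hprop}, so every block lies in that orbit. Consequently $t$ divides the order of the image of $H$ in the symmetric group on the blocks, and therefore $t\mid|H|$. As $|H|$ divides $|G|=(1+s)(1+s^2)$, which is odd since $s$ is even, the integer $t$ is forced to be odd; in particular $t\ge 3$. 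This already disposes of the case $t=2$, which the orbit estimate below cannot handle on its own.

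Next I would bound the orbit of a single-block vector. Fix $0\ne g\in N_1$. Since the blocks are independent, $N_1$ is the unique block containing $g$, so every element of $H$ fixing $g$ fixes $N_1$ setwise; hence $\Stab_H(g)\le P$, where $P:=\Stab_H(N_1)$ has index $t$ in $H$. As the $P$-orbit of $g$ is contained in $N_1\setminus\{0\}$, we obtain
\[
|g^H|=[H:P]\cdot|g^P|=t\,|g^P|\le t(q-1).
\]
On the other hand, Lemma \ref{lem_Hprop} gives $|g^H|\ge 1+s$, while $|N|=q^t$ divides $1+s^2$ by Lemma \ref{lem_Greg_coll}, so $s^2\ge|N|-1=q^t-1$. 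Combining these, $t(q-1)\ge 1+s$, and hence
\[
(t(q-1)-1)^2\ge s^2\ge q^t-1.
\]

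To finish I would show that this inequality fails for every odd $t\ge 3$ and every $q=p^m\ge 5$ (recall $p\equiv 1\pmod 4$ by Lemma \ref{lem_Greg_coll}, so $p\ge 5$), which is the contradiction that establishes primitivity. Whenever $t^2\le q^{t-2}$ one has $t^2(q-1)^2<t^2q^2\le q^t$, so $t^2(q-1)^2\le q^t-1$ and thus $(t(q-1)-1)^2<t^2(q-1)^2\le q^t-1$, contradicting the displayed bound. The condition $t^2\le q^{t-2}$ holds for all odd $t\ge 5$ and for $t=3$ as soon as $q\ge 9$; the only remaining case with $p\equiv 1\pmod 4$ is $t=3,\ q=5$, which is checked directly: $(3\cdot 4-1)^2=121<124=5^3-1$. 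I expect this last, tight numerical instance to be the sole delicate point, alongside the structural realization that it is the parity input $t\mid|H|$, rather than the orbit count, that rules out $t=2$.
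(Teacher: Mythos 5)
Your proof is correct, and it follows the same basic strategy as the paper's: transitivity of $H$ on the blocks together with the oddness of $|H|$ forces $t\ge 3$ odd, and then one plays the smallness of the $H$-invariant set of single-block vectors against the lower bound $1+s$ on $H$-orbit sizes from Lemma \ref{lem_Hprop}. The one genuine difference lies in the final size comparison. The paper bounds the whole set $X$ of single-block vectors, $|X|=t(q-1)\ge 1+s$, and compares with $1+s>|N|^{1/2}$ to get $t(q-1)>q^{t/2}$; this inequality survives at $(t,q)=(3,5)$ (indeed $12>5^{3/2}\approx 11.18$), so the paper must dispose of that case separately by computing that the stabilizer of the decomposition in $\GL_3(5)$ is $\F_5^*\wr S_3$, whose odd part is $3$, too small to contain $H$. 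You instead feed the divisibility $|N|\mid 1+s^2$ from Lemma \ref{lem_Greg_coll} directly into the chain, obtaining $(t(q-1)-1)^2\ge s^2\ge q^t-1$, which is just sharp enough to fail at $(3,5)$ as well ($121<124$), so no structural analysis of the imprimitive stabilizer is needed. All the individual steps check out: $\Stab_H(g)\le\Stab_H(N_1)$ because a nonzero vector lies in a unique block, the orbit bound $|g^H|=t|g^P|\le t(q-1)$ is valid, and your reduction to $t^2\le q^{t-2}$ correctly isolates $(3,5)$ as the only surviving pair with $p\equiv 1\pmod 4$. Your version is marginally shorter and more uniform; the paper's uses a weaker, more robust inequality at the cost of one extra group-theoretic computation.
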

\begin{proof}
Suppose to the contrary that $H$ preserves a decomposition $\cD$ as in \eqref{eqn_impr_D}. Since $H$ is irreducible, its induced action on $\{N_1,\ldots,N_t\}$ is transitive. It follows that $t$ is odd and so $t\ge 3$ by the fact that $|H|$ is odd. The group $H$ stabilizes the set $X=\{(x_1,\ldots,x_t):\,x_i\in N_i,\,\textup{all but one are the identities}\}$ with $|X|=(p^m-1)t$. It follows that $X$ is the union of conjugacy classes of $G$, and so $|X|\ge 1+s>|N|^{1/2}$ by Lemma \ref{lem_Hprop}. It follows that $(p^m-1)t>p^{mt/2}$, which holds only if $p^{m(t-2)}<t^2$. Since $t$ is odd and $p\equiv 1\pmod{4}$, the latter is satisfied only when $t=3$ and $p^m=5$. In this case, the stabilizer $M$ of $\cD$ in $\GL_3(5)$ is $\F_5^*\wr S_3$. It follows that $|H|=3$ since it divides the odd part of $|M|=3\cdot 2^7$. We have $|N|=5^3>|H|^2$, which contradicts Lemma \ref{lem_Hprop}. This completes the proof.
\end{proof}

\begin{lemma}\label{lem_prim_1dim}
The group $H$ lies in $\GamL_1(p^d)$.
\end{lemma}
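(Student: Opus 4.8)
The plan is to feed the group $H$, which by Lemma~\ref{lem_reg_impr} acts as a solvable primitive irreducible subgroup of $\GL_d(p)$ on $N$, into the structure theory of solvable primitive linear groups from \cite{suprk}, and then to show that the arithmetic constraints accumulated so far force the Fitting subgroup $F:=F(H)$ to be cyclic. The structure theorem provides that, because $N$ is primitive, $C_H(F)=Z(F)$ is cyclic and acts as the scalars of a subfield $\F_{p^e}$ with $e\mid d$, so that $N$ becomes an $\F_{p^e}$-space of dimension $n:=d/e$, and $F$ is the central product of $Z(F)$ with subgroups of symplectic type over certain primes $r_i$, with $n=\prod_i r_i^{a_i}$. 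The target then reduces to proving $n=1$, equivalently that $F$ is abelian.

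The easy direction is the payoff of $n=1$, which I would record first. If $F$ is cyclic it acts irreducibly on $N$, so its enveloping algebra $\F_p[F]=\F_{p^d}$ is a field and $F$ embeds in $\F_{p^d}^{\times}$; since $F$ is characteristic in $H$ we get $H=N_H(F)\le N_{\GL_d(p)}(F)$, and any element normalizing $F$ normalizes the field $\F_p[F]=\F_{p^d}$ and hence acts as a power of Frobenius followed by multiplication by a field element, giving $N_{\GL_d(p)}(F)\le\GamL_1(p^d)$. Thus $n=1$ immediately yields the lemma.

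To force $n=1$ I would show $F$ is abelian by contradiction. If not, $F$ has a normal symplectic-type component $R$ of order $r^{1+2a}$ with $a\ge 1$. Since $R\trianglelefteq H$ and $|H|$ divides the odd number $|G|=(1+s)(1+s^2)$, the prime $r$ must be odd, so $H$ induces on $R/Z(R)\cong\F_r^{2a}$ an odd-order subgroup of $\Sp_{2a}(r)$; in particular $n=\prod r_i^{a_i}$ is divisible by an odd prime power $r^a\ge 3$. I would then play this dimensional lower bound against the strong orbit constraint from Lemma~\ref{lem_Hprop}, namely that every $H$-orbit on $N\setminus\{1\}$ has size at least $1+s>|N|^{1/2}=p^{d/2}$, together with the divisibility $|N|=p^d\mid 1+s^2$ and the congruence $p\equiv 1\pmod 4$ from Lemma~\ref{lem_Greg_coll}. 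The expectation is that the $r^a$-dimensional action of the symplectic-type component over $\F_{p^e}$ produces a nonzero vector whose stabilizer in $H$ is large enough that its orbit has size at most $p^{d/2}$, contradicting the bound.

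The hard part will be precisely this exclusion of the odd symplectic-type case: oddness of $|H|$ alone does not rule out a non-cyclic Fitting subgroup, since $\Sp_{2a}(r)$ does contain irreducible odd-order subgroups, so the contradiction must come from combining the geometry of the $R$-action with the orbit lower bound $1+s>p^{d/2}$ (or from an outright arithmetic clash between $r^a\mid n$, $r\mid p^e-1$, and $p^d\mid 1+s^2$). Once this case is eliminated, $F$ is abelian, hence cyclic and irreducible, and the first paragraph gives $H\le\GamL_1(p^d)$.
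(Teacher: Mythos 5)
Your framework is the right one and matches the paper's: feed $H$ into the structure theory of primitive solvable linear groups from \cite{suprk} and reduce to showing the symplectic-type part of the relevant normal subgroup is trivial. But the proposal stops exactly where the proof begins. The entire content of the lemma is the exclusion of the non-abelian case, and you offer only an ``expectation'' that the geometry of the symplectic-type component produces a small orbit. The paper's actual argument here is long and partly computational: it decomposes $V$ as a tensor product $V_1\otimes\cdots\otimes V_k$ over the field $K$, first kills $k\ge 2$ by counting the $H$-invariant set of pure tensors against the orbit bound $|X|>|V|^{1/2}$, then for $k=1$ bounds $|H|$ by the odd part of $|\Sp_{2\ell_1}(p_1)|\cdot p_1^{2\ell_1}\cdot(q-1)\cdot b$, derives the inequality $\log_q((|\Sp_{2\ell_1}(p_1)|\cdot p_1^{2\ell_1})_{2'})+1.23>0.5\,p_1^{\ell_1}$, reduces to eleven $(p_1,\ell_1)$ pairs, and eliminates each by an explicit arithmetic check using $p^d\mid 1+s^2$ and $\gcd(|G|,z)<1+s$; the pair $(p_1,\ell_1)=(2,1)$ needs a separate analysis with the explicit generators of the normalizer of the quaternion group. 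None of this is routine, and ``an outright arithmetic clash'' does not materialize without it.

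Two further cautions. First, your claim that the prime $r$ of the symplectic-type component must be odd because $|H|$ is odd only holds if you apply the structure theorem to $F(H)$ itself; Suprunenko's theorems (19.5, 20.2--20.15), which is what the paper cites, are stated for a \emph{maximal} solvable overgroup $M\supseteq H$, whose symplectic-type components are normal in $M$ but need not meet $H$ -- this is precisely why the paper cannot discard $r=2$ and must treat the case $M_1$ of order $24(q-1)$ at the end. If you instead invoke the quasi-primitive-module version for $H$ directly (Manz--Wolf style), you need to say so and cite it, since that is not the reference in play. Second, the step ``if $F$ is cyclic it acts irreducibly on $N$'' is not automatic: a cyclic normal subgroup of a primitive irreducible group acts homogeneously, not necessarily irreducibly (scalars in $\GL_2(p)$ are the obvious counterexample). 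Irreducibility of the cyclic part is again a consequence of the structure theorem (the dimension of $V$ over the field generated by the cyclic part equals $e$), so your ``easy direction'' also leans on machinery you have not actually invoked.
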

\begin{proof}
Suppose to the contrary that $H$ does not lie in $\GamL_1(p^d)$. We take the same notation as in \cite{suprk} in this proof. Let $M$ be a maximal solvable subgroup of $\GL_d(p)$ that contains $H$.  By \cite[Theorem~19.5]{suprk} and Lemma \ref{lem_reg_impr}, $N$ is a primitive $M$-module. By \cite[Lemma~19.1,~Theorem~20.9]{suprk}, $M$ has a unique maximal abelian normal subgroup $F$ which is the multiplicative group of an extension field $K$ of $\F_p$, and $b:=[K:\,\F_p]$ divides $d$. Set $r:=\frac{d}{b}$, $q:=p^b$. Since $N$ is a faithful $F$-module, we regard $N$ as a $r$-dimensional vector space $V$ over $K$. In this way, $M$ embeds as a subgroup of $\GamL(V)$. Set $M_1:=M\cap\GL(V)$. We have $r>1$ by the assumption, so $M_1\ne F$ by \cite[Theorem~20.2]{suprk}. By \cite[Theorem~20.11]{suprk}, there is a unique subgroup $A$ of $M$ with the following properties: (i) $A$ contains $F$ in its center, (ii) $A/F$ is an abelian normal subgroup of $M/F$,  (iii) $A$ is maximal among the subgroups of $M$ satisfying (i) and (ii). In particular, $A$ is normal in $M$. By \cite[Theorem~20.3]{suprk}, we have $|A/F|=r^2$.

Suppose that $r=p_1^{\ell_1}\cdots p_k^{\ell_k}$ is  the prime decomposition of $r$. Let $Q_i$ be the full preimage of the unique Sylow $p_i$-subgroup of $A/F$, which is normal in $M$. The $Q_i$'s each have center $F$ and pairwise commute by \cite[Corollary~20.5.1,~Lemma~20.4]{suprk}, so $A$ is the central product of the $Q_i$'s. Correspondingly, $V=V_1\otimes \cdots\otimes V_k$, where each $V_i$ is an absolutely irreducible $Q_i$-module of degree $p_i^{\ell_i}$ over $K$, cf. \cite[Theorem~20.6]{suprk}. The normalizer of $A$ in $\GL(V)$ is the central product of $N_{\GL(V_i)}(Q_i)$'s by \cite[Theorem~20.13]{suprk}, and it contains $M_1$.

We claim that $k=1$. Suppose to the contrary that $k\ge 2$. The set $X$ of nonzero pure tensors $v_1\otimes\cdots\otimes v_k$ with each $v_i\in V_i$ is $M$-invariant  and thus $H$-invariant. We have
\[
|X|=(q-1)\prod_{i=1}^k\left(\frac{q^{r_i}-1}{q-1}\right),\quad\textup{ where $r_i:=p_i^{\ell_i}$ for each $i$.}
\]
By Lemma \ref{lem_Hprop}, $|X|>|V|^{1/2}=q^{r/2}$. This holds only if $2^k\cdot q^{r_1+\cdots +r_k+1-k}>q^{r/2}$, since $q-1>\frac{q}{2}$. Taking logarithm with base $q$, we deduce that $r<\sum_i 2r_i+2-2k+2k\log_q(2)$. Since $q\ge 5$ and $k\ge 2$, we have $2-2k+2k\log_q(2)\le -2+4\log_5(2)<0$. It follows that $r<2\sum_i r_i$. Set $x_i:=r_i-2$, which is nonnegative, for each $i$. We have $r=\prod_{i}(x_i+2)\ge 2^{k-1}\sum_{i}x_i+2^k$. Since $2\sum_i r_i=2\sum_{i}x_i+2k$ and $k\ge 2$, we derive the contradiction that $r\ge 2\sum_i r_i$.  This establishes the claim that $k=1$.

We now have $A=Q_1$, $r=p_1^{\ell_1}$, $q=p^b$. The group $A/F$ is an elementary abelian $p_1$-group of order $r^2$, and $p_1$ divides $q-1$ by \cite[Theorem~20.5]{suprk} and its corollary. The group $M_1/A$ embeds in $\Sp_{2\ell_1}(p_1)$ by \cite[Theorem~20.15]{suprk}. The order of $M$ divides
\begin{equation}\label{eqn_prim_z}
z:=|\Sp_{2\ell_1}(p_1)|\cdot p_1^{2\ell_1}\cdot (q-1)\cdot b.
\end{equation}
The order of $H$  divides the odd part of $z$, so the latter is larger than $|V|^{1/2}$ by Lemma \ref{lem_Hprop}. The maximum of $\log_{p^b}(b)$ is attained when $(p,b)=(5,3)$, which is smaller than $0.23$.  We take logarithm with base $q$ to deduce that
\begin{align}\label{eqn_C6_rm}
\log_q\left((|\textup{Sp}_{2\ell_1}(p_1)|\cdot p_1^{2\ell_1})_{2'}\right)+1.23>0.5p_1^{\ell_1}.
\end{align}
where $x_{2'}$ is the largest odd divisor of a nonzero integer $x$. Since $q\ge 5$ by Lemma \ref{lem_Greg_coll}, we replace $q$ by $5$ in the base of the logarithms on the left to obtain an inequality that only involves $p_1,\,\ell_1$. The new inequality is satisfied by exactly eleven $(p_1,\ell_1)$ pairs such that $p_1\equiv 1\pmod{5}$,  and each pair satisfies that $p_1\le 13$ and $\ell_1\le 5$.  For the cases where $(p_1,\ell_1)\ne(2,1)$,  the quantity $y:=(0.5p_1^{\ell_1}-1.23)^{-1}$ is positive and $q$ is upper bounded by $D:=\lfloor (|\textup{Sp}_{2\ell_1}(p_1)|\cdot p_1^{2\ell_1})_{2'}^{y}\rfloor$ by \eqref{eqn_C6_rm}. For each such $q=p^b$ that $p\equiv 1\pmod{4}$, $q\equiv 1\pmod{p_1}$ and $q\le D$, there is a unique even integer $s$ such that $0\le s<q^r-1$ and $q^r$ divides $1+s^2$, where $r=p_1^{\ell_1}$.  We then calculate $|G|=(1+s)(1+s^2)$ and the value $z$ in \eqref{eqn_prim_z}. The order of $H$ divides $\gcd(|G|,z)$, which turns out to be smaller than $1+s$ in all cases.  This contradicts the condition $|H|\ge 1+s$ in Lemma \ref{lem_Hprop}. This excludes all the $(p_1,\ell_1)$ pairs except for $(2,1)$.

It remains to consider the case $(p_1,\ell_1)=(2,1)$. In this case, $r=2$, $K=\F_{q}$ and $|V|=q^2$. The group $M_1$ has order $24(q-1)$, and is generated by $F$ and
\[
  \begin{pmatrix}1&0\\0&-1\end{pmatrix},\;
  \begin{pmatrix}0&1\\1&0\end{pmatrix},\;
  \begin{pmatrix}1&0\\0&-i\end{pmatrix},\;
  \begin{pmatrix}1&i\\1&-i\end{pmatrix}
\]
by \cite[Theorem~21.6]{suprk}. Here, $i$ is a square root of $-1$ in $\F_p$. The group $M$ is generated by $M_1$ and the semilinear map $\sigma:\,(v_1,v_2)\mapsto(v_1^p,v_2^p)$ for $(v_1,v_2)\in V$. Set $X:=\{(\lambda a,\lambda b):\,a,b\in\F_p,\lambda\in \F_q\}$, and let $Y$ be the set of $1$-dimensional $\F_q$-subspaces contained in $X$. They are both preserved by $M$, and $|Y|=p+1$. Let $U$ be the kernel of the action of $M$ on $Y$, which contains $F$ and $\sigma$. The group $\overline{M}:=M/U$ has order $24$, and its subgroup $\bar{H}$ thus has order $1$ or $3$. Therefore, the $\bar{H}$-orbits of $Y$ have sizes $1$ or $3$, and so $H$ stabilizes a subset $S$ of $X$ of size $q-1$ or $3(q-1)$. Since $|H|$ is odd, there is an $H$-orbit of  size at most $\frac{3}{2}(q-1)$ contained in $S$. Since each $H$-orbit has size at least $1+s$ by Lemma \ref{lem_Hprop}, we deduce that $s<\frac{3}{2}(q-1)$. Since $1+s^2$ is an odd multiple of $q^2$ by Lemma \ref{lem_Greg_coll} and $1+\frac{9}{4}(q-1)^2<3q^2$, we deduce that $1+s^2=q^2$, which is impossible. This completes the proof.
\end{proof}

In view of Lemma \ref{lem_prim_1dim}, we identify $N$ as the additive group of the field $\F_{p^d}$, where $p^d=|N|$.  Take $\gamma$ to be a primitive element of $\F_{p^d}$. Let $\rho$ be the element of $\textup{GL}_1(p^d)$ such that $\rho(x)=\gamma x$ for $x\in \F_{p^d}$.  We set $e:=[H:\,H_0]$, where 
\begin{equation}\label{eqn_1dim_H0def}
  H_0:=H\cap\textup{GL}_1(p^d).
\end{equation}
\noindent Since $|G|$ is odd, we deduce that $e$ is odd. We have $H_0=\la \rho^k\ra$ for some divisor $k$ of $p^d-1$, and $p^d=q_1^e$ for some prime power $q_1$. We have $H=\la \rho^k,\rho^{s_0}\sigma\ra$ for some integer $s_0$, where $\sigma(x)=x^{q_1}$ for $x\in\F_{p^d}$. It follows that $H'=\la\rho^{k(q_1-1)}\ra$, and it is contained in $H_0$. 

\begin{lemma}\label{lem_1dim_Hprop}
Take notation as above. Then
\begin{enumerate}
\item[(1)] $|H|>1+s$, and $|C_G(N)|\le s^2$.
\item[(2)] $H$ is nonabelian, and $d\ge e>1$.
\item[(3)] $|H'|$ divides $\frac{q_1^e-1}{q_1-1}$, and $[H:\,H']$ divides $\frac{1}{4}e(q_1-1)$, where $p^d=q_1^e$.
\end{enumerate}
\end{lemma}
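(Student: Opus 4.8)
The plan is to handle the three parts in the order (1), (2), (3), since the argument for (2) will build on (1), and (3) is essentially bookkeeping on the structure recorded just before the statement.

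For part (1), note that $|C_G(N)|=|G|/|H|$ with $|G|=(1+s)(1+s^2)$, and $|C_G(N)|\le 1+s^2$ by Lemma~\ref{lem_Hprop}; hence the bound $|C_G(N)|\le s^2$ is equivalent to excluding the single boundary case $|H|=1+s$, which is also exactly what gives $|H|>1+s$. So I would assume $|H|=1+s$ and seek a contradiction. By Lemma~\ref{lem_Hprop} every $H$-orbit on $N\setminus\{1\}$ has size at least $1+s=|H|$, and every orbit size divides $|H|$; therefore all orbits are regular and $H$ acts freely on the $p^d-1$ nonzero vectors of $N$, forcing $(1+s)\mid(p^d-1)$. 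Since also $p^d\mid 1+s^2$, I write $1+s^2=p^d c$ with $c$ a positive integer and reduce modulo $1+s$: using $s\equiv-1$ we get $1+s^2\equiv 2$ and $p^d\equiv 1$, so $c\equiv 2\pmod{1+s}$. As $c$ is odd (because $1+s^2$ is odd) and $1+s\ge 5$ is odd, the smallest positive value with this residue is $c=3+s$, whence $p^d=(1+s^2)/c\le(1+s^2)/(3+s)<2s+3$. This contradicts $|N|=p^d\ge 2s+3$ from Lemma~\ref{lem_Hprop}. Thus $|H|>1+s$ and $|C_G(N)|\le s^2$.

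For part (2), to prove $H$ nonabelian I argue by contradiction. If $H=G/C_G(N)$ is abelian then $G'\le C_G(N)$, so $|H|$ divides $|G/G'|$, which by Lemma~\ref{lem_Gbarorder_seqt} divides $1+s$ or $1+s^2$. The first case gives $|H|\le 1+s$, contradicting part (1). In the second case $c':=(1+s^2)/|H|$ is an integer and $|C_G(N)|=(1+s)c'$; since $p^d=|N|$ divides $|C_G(N)|$ and $\gcd(p,1+s)=1$, we get $p^d\mid c'$, so $c'\ge 2s+3$ and $|C_G(N)|\ge(1+s)(2s+3)>s^2$, again contradicting part (1). Hence $H$ is nonabelian. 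Consequently $e>1$, because $e=1$ would make $H=H_0$ cyclic and abelian; and $d\ge e$ because $e=[H:H_0]$ is the order of the image of $H$ in $\Gal(\F_{p^d}/\F_p)\cong\Z_d$, so $e\mid d$.

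For part (3), I use the identity $H'=\langle\rho^{k(q_1-1)}\rangle$ already established before the statement. Since $\rho$ has order $p^d-1=q_1^e-1$, we obtain $|H'|=(q_1^e-1)/\gcd(k(q_1-1),q_1^e-1)$; as $q_1-1$ divides both $k(q_1-1)$ and $q_1^e-1$ it divides the gcd, so $|H'|$ divides $(q_1^e-1)/(q_1-1)$. For the index, $[H:H']=[H:H_0]\,[H_0:H']=e\cdot\gcd(q_1-1,n_0)$ with $n_0=|H_0|$. The order $n_0$ is odd because $|H|$ divides the odd number $|G|$, while $q_1=p^{d/e}\equiv 1\pmod 4$ since $p\equiv 1\pmod 4$ by Lemma~\ref{lem_Greg_coll}; hence the $2$-part of $q_1-1$, which is at least $4$, is coprime to $n_0$ and survives in $(q_1-1)/\gcd(q_1-1,n_0)$, giving $4\gcd(q_1-1,n_0)\mid(q_1-1)$. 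Therefore $[H:H']=e\gcd(q_1-1,n_0)$ divides $\tfrac14 e(q_1-1)$.

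The main obstacle is part (1): its whole content is the realization that the extremal case $|H|=1+s$ forces a fixed-point-free (Frobenius-type) action of $H$ on $N$, which converts a purely group-theoretic boundary case into the arithmetic divisibility $(1+s)\mid(p^d-1)$. Once this reduction is in hand, the congruence analysis modulo $1+s$ together with the lower bound $|N|\ge 2s+3$ closes the case, after which parts (2) and (3) follow by comparatively routine divisibility bookkeeping against Lemmas~\ref{lem_Gbarorder_seqt} and~\ref{lem_Greg_coll}.
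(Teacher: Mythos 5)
Your proposal is correct, and parts (1) and (3) coincide with the paper's own argument: the same reduction of (1) to the boundary case $|H|=1+s$, the same use of regular orbits to get $(1+s)\mid(p^d-1)$, the same congruence $\lambda\equiv 2\pmod{1+s}$ for the odd cofactor $\lambda=(1+s^2)/p^d$ played off against $p^d\ge 2s+3$, and in (3) the same cyclic-group index computation $[H_0:H']=\gcd(q_1-1,|H_0|)$ combined with $|H_0|$ odd and $q_1\equiv 1\pmod 4$. Where you genuinely diverge is in part (2). The paper first bounds $|H|\le e(q_1-1)$ (using $H'=1$ to force $|H_0|\mid q_1-1$), solves the inequality $q_1^{e/2}<e(q_1-1)$ to reduce to $e=1$ or $(q_1,e)=(5,3)$, deduces $\gcd(|H|,|N|)=1$ in those cases, and then contradicts $|H|\cdot|N|\mid 1+s^2$ against $|H|\cdot|N|\ge(1+s)(2s+3)$. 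You instead bypass the case analysis entirely: from $|H|\mid 1+s^2$ you write $|C_G(N)|=(1+s)c'$ with $c'=(1+s^2)/|H|$, use $N\le C_G(N)$ together with $\gcd(p,1+s)=1$ to force $p^d\mid c'$, and conclude $|C_G(N)|\ge(1+s)(2s+3)>s^2$, contradicting part (1). This is a cleaner route to the same contradiction, since it needs no coprimality of $|H|$ with $|N|$ and no enumeration of $(q_1,e)$ pairs; what the paper's detour buys is nothing essential here, as both arguments rest on the same inputs (Lemma~\ref{lem_Gbarorder_seqt}, $|N|\ge 2s+3$, and part (1)).
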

\begin{proof}
(1) It suffices to show that $|H|>1+s$, since then $|C_G(N)|=\frac{|G|}{|H|}<1+s^2$. Suppose to the contrary that $|H|\le1+s$. By Lemma \ref{lem_Hprop}, $|H|=1+s$ and so each $H$-orbit on $N\setminus\{1\}$ must have size $1+s$. It follows that $1+s$ divides $p^d-1$. There exists an odd integer $\lambda$ such that $1+s^2=\lambda p^d$ by Lemma \ref{lem_Greg_coll} (2). Taking modulo $s+1$, we obtain $\lambda-2\equiv 0\pmod{1+s}$. Since $\lambda-2$ is odd, we deduce that $\lambda-2\ge 1+s$. On the other hand, $p^d\ge 2s+3$ by Lemma \ref{lem_Hprop}, so $\lambda<\frac{1+s^2}{2s+2}<1+s$: a contradiction. This proves the claim.

(2) Since $H_0$ is abelian and $|H|=e\cdot|H_0|$, it suffices to show that $H$ is nonabelian. Suppose to the contrary that $H$ is abelian, i.e., $H'=1$.  It follows from $H'=\la\rho^{k(q_1-1)}\ra$ that $H_0=\la\rho^k\ra$ has order dividing $q_1-1$, and so $|H|\le e(q_1-1)$. We have $|H|>|N|^{1/2}=q_1^{e/2}$ by Lemma \ref{lem_Hprop}, so $q_1^{e/2}<e(q_1-1)$. The latter inequality holds with $q_1\equiv 1\pmod{4}$ and $e$ odd only if $e=1$ or $(q_1,e)=(5,3)$. In both cases, we deduce that $|H|$ is relatively prime to $|N|$. Since $H'=1$ and $H=G/C_G(N)$, we have $G'\le C_G(N)$.  Therefore, $|H|=[G:\,C_G(N)]$ divides $[G:\,G']$. By (1) and Lemma \ref{lem_Gbarorder_seqt}, we deduce that $|H|$ divides $1+s^2$. Since $|N|$ divides $1+s^2$ by Lemma \ref{lem_Greg_coll}, we deduce that $|H|\cdot|N|$ divides $1+s^2$. On the other hand, $|H|\cdot|N|\ge (1+s)(2s+3)>1+s^2$ by Lemma \ref{lem_Hprop}: a contradiction. This proves the claim.

(3) We have $H'=\overline{G'}=\la \rho^{k(q_1-1)}\ra$, so its order divides $\frac{q_1^e-1}{q_1-1}$ and it is contained in $H_0$. The second claim follows from the facts $[H:\,H_0]=e$, $[H_0:\,H']$ divides the odd part of $q_1-1$, and $p\equiv1\pmod{4}$ by Lemma \ref{lem_Greg_coll}.
\end{proof}

\begin{lemma}\label{lem_N_line}
For each $g\in N\setminus\{1\}$, it fixes no line, and
\begin{equation}\label{eqn_C4_gGdel}
   |g^G\cap \Delta|<\frac{2+\sqrt{2s}}{1+s}\cdot|g^G|,
\end{equation}
where $\Delta$ is as in \eqref{eqn_Delta_def}. Moreover, we have
\begin{equation}\label{eqn_C4_gGdelN}
   |N\cap\Delta|<\frac{2+\sqrt{2s}}{1+s}\cdot (p^d-1).
\end{equation}
\end{lemma}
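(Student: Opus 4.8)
The plan is to derive both inequalities from two inputs: the vanishing of the fixed-line count $\cL_1(g)$ for $g\in N\setminus\{1\}$, and the upper bound on $|\cL_2(g)|$ supplied by Theorem \ref{thm_l2g_upp}. First I would note that since $N$ is an elementary abelian $p$-group with $p\equiv 1\pmod 4$, every $g\in N\setminus\{1\}$ has order exactly $p$, an odd prime with $p\ge 5$; in particular $g^2\ne 1$, so Theorem \ref{thm_l2g_upp} applies to each such $g$.

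To show that $g$ fixes no line, I would argue by contradiction: suppose $g$ stabilizes a line $\ell$. Because $G$ acts regularly on $\cP$, the nontrivial element $g$ fixes no point, so it acts on the $s+1$ points of $\ell$ without fixed points. As $g$ has prime order $p$, every orbit on these points has size $p$, whence $p\mid (s+1)$. On the other hand $p\mid (1+s^2)$ by Lemma \ref{lem_Greg_coll}, and since $s$ is even, $1+s$ is odd and $1+s^2\equiv 2\pmod{1+s}$, so $\gcd(1+s,1+s^2)=1$. This forces $p\nmid(1+s)$, a contradiction; hence $\cL_1(g)=\emptyset$.

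With $\cL_1(g)=\emptyset$, Lemma \ref{lem_Y_Benson} reduces (using $s=t$) to $|C_G(g)|\cdot|g^G\cap\Delta|=|\cL_2(g)|$, while Theorem \ref{thm_l2g_upp} gives $|\cL_2(g)|<(1+s^2)(2+\sqrt{2s})$. Substituting $|C_G(g)|=|G|/|g^G|=(1+s)(1+s^2)/|g^G|$ and cancelling the common factor $1+s^2$ yields precisely \eqref{eqn_C4_gGdel}. For the ``moreover'' part, I would observe that $1\notin\Delta$, so $N\cap\Delta$ is the disjoint union of the sets $g^G\cap\Delta$ as $g^G$ ranges over the conjugacy classes contained in $N\setminus\{1\}$ (these partition $N\setminus\{1\}$ since $N$ is normal in $G$). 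Summing \eqref{eqn_C4_gGdel} over these classes and using $\sum_{g^G\subseteq N\setminus\{1\}}|g^G|=|N|-1=p^d-1$ gives \eqref{eqn_C4_gGdelN}.

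The only step requiring a genuine idea is ruling out fixed lines, where the coprimality $\gcd(1+s,1+s^2)=1$ coming from $s$ being even is the decisive observation; everything else is bookkeeping. The points to double-check are that $1$ is excluded from $\Delta$ (so the summation in the last part is clean) and that the hypothesis $g^2\ne 1$ of Theorem \ref{thm_l2g_upp} is genuinely met, which it is because $p\ge 5$.
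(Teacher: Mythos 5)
Your proposal is correct and follows essentially the same route as the paper: rule out fixed lines via the orbit argument on the $s+1$ points of a putative fixed line combined with $\gcd(1+s,1+s^2)=1$, then feed $\cL_1(g)=\emptyset$ into Lemma \ref{lem_Y_Benson} and the bound of Theorem \ref{thm_l2g_upp}, and finally sum over the conjugacy classes partitioning $N\setminus\{1\}$. Your explicit check that $g^2\ne 1$ (via $p\ge 5$) is a point the paper leaves implicit, but it is the same argument.
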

\begin{proof}
Take $g\in N\setminus\{1\}$, and suppose that it fixes a line $\ell$. Let $m$ be the order of $g$. Its orbits on the points of $\ell$ all have lengths $m$, since $G$ is regular on points. It follows that $m$ divides $1+s$. On the other hand, $|N|$ divides $1+s^2$ by Lemma \ref{lem_Greg_coll}, so $m$ divides $\gcd(1+s,1+s^2)=1$: a contradiction. This proves the first claim. By Lemma \ref{lem_Y_Benson}, we have $|\cL_2(g)|=|C_G(g)|\cdot|g^G\cap\Delta|$, where $\cL_2(g)$ is as in \eqref{eqn_reg_P2L2def}. The inequality \eqref{eqn_C4_gGdel} then follows from the upper bound on $|\cL_2(g)|$ in Theorem \ref{thm_l2g_upp} and the fact $|C_G(g)|\cdot|g^G|=(1+s)(1+s^2)$. Taking summation over the nontrivial conjugacy classes of $G$ contained in $N$, we deduce \eqref{eqn_C4_gGdelN} from \eqref{eqn_C4_gGdel}. This completes the proof.
\end{proof}

\begin{lemma}\label{lem_1dim_sbound}
We have $s\le d^{-1}p^d$.
\end{lemma}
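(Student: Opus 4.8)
The plan is to transfer the problem to the action of $H$ on $N\cong\F_{p^d}$ and count orbits. Recall that $H_0=H\cap\GL_1(p^d)=\la\rho^k\ra$ acts freely on $N\setminus\{1\}$, its orbits being the $k$ cosets of $\la\gamma^k\ra$, each of size $|H_0|=(p^d-1)/k$; the $H$-orbits arise by fusing these cosets under the cyclic quotient $H/H_0$ of order $e$, so each $H$-orbit has size $|H_0|f$ with $f\mid e$ and $\sum f=k$ over the distinct orbits. Write $r$ for the number of $H$-orbits on $N\setminus\{1\}$, equivalently the number of $G$-conjugacy classes contained in $N\setminus\{1\}$. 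The first step is the elementary inequality $r(1+s)\le p^d-1$: the orbits partition $N\setminus\{1\}$ and each has size at least $1+s$ by Lemma \ref{lem_Hprop}. In particular $rs<p^d$.

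With this in hand the argument splits on the size of $r$. If $r\ge d$ then $sd\le sr<p^d$ and we are done immediately, so the substantial case is $r<d$, that is, few and hence large orbits. Here I would work with a minimal orbit: its size is $|H_0|f_{\min}\ge 1+s$, so $s<(p^d-1)f_{\min}/k$, and the target $s\le d^{-1}p^d$ reduces to showing $k\ge d\,f_{\min}$. Since the $f_i$ are precisely the cycle lengths of the order-$e$ affine permutation $a\mapsto q_1a+s_0$ of $\Z/k$ induced by $H/H_0$, and since $e\mid d$ (as $q_1=p^{d/e}$), I would try to force $k\ge d\,f_{\min}$ from the arithmetic of these cycle lengths together with the order bound $|H|\le\tfrac14 e(p^d-1)$ of Lemma \ref{lem_1dim_Hprop}(3) and the lower bound $s<|H|$ of Lemma \ref{lem_1dim_Hprop}(1).

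The main obstacle is exactly this few-orbit regime, whose extreme case is $H$ transitive on $N\setminus\{1\}$ (so $r=1$): orbit counting alone is then too weak, since a single orbit is a priori compatible with $s$ almost as large as $p^d$, which would break the lemma. Ruling this out should require geometric input rather than group orders alone, namely the bound $|N\cap\Delta|<\frac{2+\sqrt{2s}}{1+s}(p^d-1)$ of Lemma \ref{lem_N_line}, inherited from the expander estimate of Theorem \ref{thm_l2g_upp}, together with the number-theoretic restriction $s^2\equiv -1\pmod{p^d}$ forced by $p^d\mid 1+s^2$; the latter already prevents $s$ from lying close to $p^d$ unless $p^d$ is very small. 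I also expect to exploit that transitivity forces $(p^d-1)\mid|H|$, which combined with $|H|\le\tfrac14 e(p^d-1)$ pins down $e$ and hence $d$, while $|N\cap\Delta^c|\ge rs$ from the congruences of Proposition \ref{prop_gGcapDel} keeps the count honest. Balancing the geometric upper bound on $|N\cap\Delta|$ against the orbit-size lower bound on $s$, so that the former exactly compensates for the loss incurred when $r$ is small, is where I anticipate the real work to lie.
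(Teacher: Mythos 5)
Your opening reduction is correct but captures only the trivial part of the lemma: under the negation $s>d^{-1}p^d$ one automatically has $\lfloor (p^d-1)/s\rfloor<d$, so the number $r$ of $H$-orbits on $N\setminus\{1\}$ is forced below $d$ and the entire content of the statement lives in your ``few large orbits'' regime. There your argument has a genuine gap. The reduction you propose for that regime --- deducing $s\le d^{-1}p^d$ from a minimal orbit of size $|H_0|f_{\min}\ge 1+s$ by establishing $k\ge d\,f_{\min}$ --- cannot work as stated: when $H$ is transitive on $N\setminus\{1\}$ one has $f_{\min}=k$, so $k\ge d\,f_{\min}$ would force $d\le 1$, and more generally nothing in the group-order bounds of Lemma \ref{lem_1dim_Hprop} prevents a single orbit of size close to $p^d-1$. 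You recognize this and correctly name the missing ingredients (the bound on $|N\cap\Delta|$ from Lemma \ref{lem_N_line}, the congruence $s^2\equiv-1\pmod{p^d}$, and Proposition \ref{prop_gGcapDel}), but you never carry out the balancing; the proposal explicitly defers ``the real work,'' which is precisely the proof.

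For comparison, the paper's argument in this regime is considerably more delicate than an orbit count. Writing $p^d-1=as+b$ with $0\le b<s$ (so $a<d$ under the contradiction hypothesis), it first uses the expander bound of Lemma \ref{lem_N_line} to show $b=|N\cap\Delta|<d^{-1}s$, then shows each class $g^G\subseteq N\setminus\{1\}$ has $|g^G|=\frac{p^d-1}{k}e'$ with $|g^G\cap\Delta|=\frac{be'}{k}$, and finally exploits $a^2+(b+1)^2\equiv 0\pmod{p^d}$ together with the fact that $a\cdot|G|\equiv 0\pmod{p^d-1}$ to derive a congruence modulo $p^d-1$ whose left side is nonzero and bounded, yielding $p^d-1\le (u+3+2a)b+(a+2)(u-1)+2a^2$ with $u=\frac{a^2+(b+1)^2}{p^d}$ bounded in terms of $d$. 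This leaves finitely many $(p,d)$ pairs, each excluded by a direct search for admissible $s$. None of this chain --- in particular the identity $|g^G\cap\Delta|=be'/k$ and the final congruence trick, which are what actually convert the geometric bound into the inequality $s\le d^{-1}p^d$ --- appears in your proposal, so the proof is incomplete.
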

\begin{proof}
Suppose to the contrary that $s>d^{-1}p^d$. Since $|H|$ divides $|\GamL_1(p^d)|=d(p^d-1)$ and $|H|>1+s$ by Lemma \ref{lem_1dim_Hprop}, we deduce that $s<dp^d$. There are integers $a,b$ such that
\[
  p^d-1=as+b,\quad 0\le b\le s-1.
\]
Then $p^d>as>ad^{-1}p^d$, so $a<d$. Recall that $p\equiv 1\pmod{4}$  and $1+s^2\equiv 0\pmod{p^d}$ by Lemma \ref{lem_Greg_coll}. We divide the proof into four steps.

(i) We first show that $d^3(2+\sqrt{2dp^d})<p^d$. It involves only $(p,d)$ and is violated only if $p=5$ and $d\le 10$, or $(p,d)=(13,3)$.  There is no even integer  $s$ such that $d^{-1}p^d<s<dp^d$, $1+s^2\equiv 0\pmod{p^d}$, and $\gcd((1+s)(1+s^2),d(p^d-1))>1+s$ in each case, so $(p,d)$ is none of those pairs. This establishes the claim.

(ii) We next show that $b=|N\cap\Delta|$ and $b<d^{-1}s$, where $\Delta$ is defined as in \eqref{eqn_Delta_def}. We deduce from \eqref{eqn_C4_gGdelN} that
\begin{equation}\label{eqn_tp1}
\frac{|N\cap\Delta|}{s}< \frac{2+\sqrt{2s}}{s(1+s)}(p^d-1)
<\frac{2+\sqrt{2dp^d}}{s^2}p^d<d^2(2+\sqrt{2dp^d})p^{-d}<\frac{1}{d},
\end{equation}
where we used (i) in the last inequality. Therefore, $|N\cap\Delta|<d^{-1}s$. We deduce that  $|N\cap\Delta|\equiv p^d-1\equiv b\pmod{s}$ from Proposition \ref{prop_gGcapDel}. The claim then follows.

(iii) Take an element $g$ of $N\setminus\{1\}$. Since $|H|=\frac{p^d-1}{k}\cdot e$ and $H_0$ is semiregular on $N\setminus\{1\}$, there is a divisor $e'$ of $e$ such that $|g^G|=\frac{p^d-1}{k}\cdot e'$, i.e., $|g^G|=\frac{ae'}{k}s+\frac{be'}{k}$. We deduce that $\frac{k-1}{k}s+\frac{be'}{k}<s$, i.e., $be'<s$, from (ii) and the fact $e'\le d$.  We claim that $k$ divides $e'\cdot\gcd(a,b)$. As a corollary, $|g^G\cap\Delta|=\frac{be'}{k}$ and is the remainder of $|g^G|$ modulo $s$ by Proposition \ref{prop_gGcapDel} and the fact $|g^G\cap\Delta|\le |g^G\cap N|=b<s$. It suffices to show that $k$ divides $ae'$. If the remainder $r$ of $ae'$ modulo $k$ is nonzero, then $\frac{r}{k}s+\frac{be'}{k}\le \frac{k-1}{k}s+\frac{be'}{k}<s$, and so $|g^G|\mod{s}$ equals $\frac{r}{k}s+\frac{be'}{k}$ which is at least $k^{-1}s$. By Proposition \ref{prop_gGcapDel}, we have $|g^G\cap\Delta|\ge k^{-1}s$. On the other hand, $|g^G\cap\Delta|<\frac{1}{ks}(2+\sqrt{2s})dp^d$ by \eqref{eqn_C4_gGdel} and the fact $|g^G|<k^{-1}dp^d$. By combining the two inequalities, we deduce that $s^2<(2+\sqrt{2s})dp^d$. Since $d^{-1}p^d<s<dp^d$, it follows that $d^{-2}p^{2d}<(2+\sqrt{2dp^d})dp^d$, i.e.,  $p^{d}<(2+\sqrt{2dp^d})d^3$. This contradicts (i) and establishes the claim.

(iv) Take an element $g\in N\setminus\{1\}$. There is a divisor $e'$ of $e$ such that $k$ divides $e'\gcd(a,b)$, $|g^G|=\frac{p^d-1}{k}e'$ and $|g^G\cap \Delta|=\frac{be'}{k}$ by step (ii). Since $p^d=as+b+1$ and $p^d$ divides $1+s^2$, we have
$0\equiv a^2(1+s^2)\equiv a^2+(b+1)^2\pmod{p^d}$, so $a^2+(b+1)^2=up^d$ for some integer $u$.  From the facts $a\cdot|g^G|=(p^d-1)\cdot\frac{ae'}{k}$ and $|g^G|$ divides $|G|$, we deduce that $a\cdot|G|$ is a multiple of $p^d-1$. Therefore,
\begin{align}
0\equiv &a^3\cdot|G|=(a+as)(a^2+a^2s^2)\equiv(a-b)(a^2+b^2)=(a-b)(up^d-1-2b)\notag\\
\equiv& (a-b)(u-1-2b)\equiv (a-b)(u-1)-2ab+2(u-1-2b-a^2)\notag\\
=&-(u+3+2a)b+(a+2)(u-1)-2a^2\pmod{p^d-1}.\label{eqn_tp2_cong}
\end{align}
The last term in \eqref{eqn_tp2_cong} is nonzero, since otherwise $b=a+2-\frac{4a^2+8a+8}{u+3+2a}<a+2$ and we would derive a contradiction $a^2+(b+1)^2<d^2+(d+2)^2<p^d$ by the fact $a<d$. We conclude that $-(u+3+2a)b+(a+2)(u-1)-2a^2$ is a nonzero multiple of $p^d-1$, which implies that
\begin{equation}\label{eqn_1dim_tp3}
(u+3+2a)b+(a+2)(u-1)+2a^2\ge p^d-1.
\end{equation}
By \eqref{eqn_C4_gGdelN}, we have $b=|N\cap\Delta|<(2+\sqrt{2s})s^{-1}p^d<d(2+\sqrt{2dp^d})$.
It follows that $u=\frac{a^2+(b+1)^2}{p^d}$ is upper bounded by $2d^3+1$ when $p^d$ is sufficiently large. We plug the expression of $u$ in \eqref{eqn_1dim_tp3} and use the bounds $a<d$, $b<d(2+\sqrt{2dp^d})$ to obtain a new inequality that involves only $p$ and $d$. There are $31$ such $(p,d)$ pairs that $p\equiv 1\pmod{4}$, $d\ge 3$ and the new inequality holds. Those cases are excluded by showing that there is no even integer $s$ with desired properties as in step (i). This completes the proof.
\end{proof}

\begin{lemma}\label{lem_H0_irred}
The group $H_0$ is irreducible on $\F_q$, where $N=(\F_q,+)$.
\end{lemma}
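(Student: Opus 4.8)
The plan is to argue by contradiction, assuming $H_0$ is reducible on $N=\F_q$ with $q=p^d$, and to collide this with the arithmetic already imposed on $(p,d,s)$. First I would convert reducibility into a divisibility statement. Since $H_0=\la\rho^k\ra$ acts on $\F_{p^d}$ as multiplication by $\gamma^k$, it is irreducible over $\F_p$ precisely when $\F_p[\gamma^k]=\F_{p^d}$; hence if $H_0$ is reducible then $\gamma^k$ lies in a maximal proper subfield $\F_{p^m}$ with $m=d/\ell$ for some prime $\ell\mid d$, equivalently the order $|H_0|=(p^d-1)/k$ divides $p^{d/\ell}-1$. In particular $|H_0|\le p^{d/\ell}-1\le p^{d/2}-1$.

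I would then record two consequences. On the geometric side, $p^d=|N|$ divides $1+s^2$ by Lemma \ref{lem_Greg_coll}, so $s\ge\sqrt{p^d-1}>p^{d/2}-1\ge|H_0|$; since $H_0\le\GL_1(p^d)$ acts semiregularly on $N\setminus\{1\}$, each $H$-orbit is a union of some $e'$ orbits of $H_0$ of common size $|H_0|$ with $e'\mid e$, and as every $H$-orbit has size at least $1+s>|H_0|$ by Lemma \ref{lem_Hprop} we get $e'\ge 2$, hence $e'\ge 3$ because $e$ is odd. On the module side, $\F_{p^m}$ is a nonzero proper $H_0$-invariant $\F_p$-subspace; since $H_0\triangleleft H$, every $H$-translate of $\F_{p^m}$ is again $H_0$-invariant and so is an $\F_{p^m}$-line, and there are at most $[H:H_0]=e$ of them. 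As $H$ is irreducible their $\F_p$-span equals $N$, and spanning the $(d/m)$-dimensional $\F_{p^m}$-space $N$ requires at least $d/m$ of these lines; therefore $e\ge d/m=\ell$.

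For $\ell\ge 3$ these inequalities already close the case. Combining $1+s<|H|=e|H_0|\le d(p^{d/\ell}-1)$ with $s\ge\sqrt{p^d-1}$ gives, up to lower-order terms, $p^{d(1/2-1/\ell)}<d$, which for $p\equiv 1\pmod 4$ (so $p\ge 5$) leaves only a short explicit list of pairs $(p,d)$. For each such pair I would enumerate the even integers $s$ with $\sqrt{p^d-1}\le s\le d^{-1}p^d$ and $p^d\mid 1+s^2$, and check, exactly as in Lemmas \ref{lem_prim_1dim} and \ref{lem_1dim_sbound}, that no such $s$ is compatible with the requirements $|H|>1+s$ and $|H|\mid d(p^d-1)$; this excludes all of them.

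The genuine obstacle is the index-two case $\ell=2$, where $|H_0|$ can be as large as $p^{d/2}-1\approx\sqrt{|N|}\approx s$, so the factor $e\le d$ in $|H|=e|H_0|$ cannot be absorbed and the crude size comparison is inconclusive. To treat it I would replace the size bound by the sharp structural estimate $|H|=[H:H']\cdot|H'|$ of Lemma \ref{lem_1dim_Hprop}(3) together with $H'\le H_0\le\F_{p^{d/2}}^*$, and then run the residue analysis of Lemma \ref{lem_1dim_sbound}: for $g\in N\setminus\{1\}$ one has $1\le|g^G\cap\Delta|<\frac{2+\sqrt{2s}}{1+s}\,|g^G|$ by Lemma \ref{lem_N_line} and $|g^G\cap\Delta|\equiv|g^G|\pmod{s}$ by Proposition \ref{prop_gGcapDel}, and feeding these into the congruence $a^2+(b+1)^2\equiv 0\pmod{p^d}$ that arises from $p^d\mid 1+s^2$ bounds the remaining data to finitely many triples $(p,d,e)$, which are then killed by the same ``no admissible even $s$'' verification. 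I expect essentially all the difficulty to reside in this $\ell=2$ analysis rather than in the bookkeeping for $\ell\ge 3$.
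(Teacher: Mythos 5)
Your reduction of reducibility to $\gamma^k\in\F_{p^{d/\ell}}$ for a prime $\ell\mid d$, and your disposal of the case $\ell\ge 3$ by comparing $|H|=e|H_0|\le d\bigl(p^{d/\ell}-1\bigr)$ with $|H|>1+s>p^{d/2}$, match the first half of the paper's proof (which phrases the conclusion as: $d$ is even and the subfield is $\F_{p^{d/2}}$, after a small check at $(p,d)=(5,3)$ using the extra fact that $H'\ne 1$ forces $u\nmid d/e$). Your observation that the $H$-translates of $\F_{p^{d/\ell}}$ are at most $e$ lines whose span is $N$, whence $e\ge\ell$, is a pleasant addition not in the paper, though it is not needed for $\ell\ge3$ and gives nothing new for $\ell=2$.

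The gap is in the case $\ell=2$, which you correctly identify as the heart of the matter but do not resolve. Your plan is to ``run the residue analysis of Lemma \ref{lem_1dim_sbound}'', i.e.\ write $p^d-1=as+b$ and exploit $a^2+(b+1)^2\equiv 0\pmod{p^d}$. That analysis is only effective under the hypothesis $s>d^{-1}p^d$ (precisely the hypothesis Lemma \ref{lem_1dim_sbound} is refuting): it is what gives $a<d$ and, via \eqref{eqn_C4_gGdelN}, $|N\cap\Delta|<d^{-1}s$, which lets one identify $b$ with $|N\cap\Delta|$ and bound $u=(a^2+(b+1)^2)/p^d$ by roughly $2d^3$. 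In the situation $\ell=2$ one only knows $\sqrt{p^d-1}\le s\le d^{-1}p^d$; when $s$ is near the lower end, $a$ and $b$ are both of order $p^{d/2}$, the bound $|N\cap\Delta|<\frac{2+\sqrt{2s}}{1+s}(p^d-1)$ is of order $p^{3d/4}\gg s$ so $b$ cannot be identified with $|N\cap\Delta|$, and the relation $a^2+(b+1)^2=up^d$ with $u$ bounded holds for every admissible $(p,d)$ --- no finiteness follows. The paper closes this case by a different arithmetic: writing $1+s^2=mp^d$, showing $m<e^2$ and $|H|\mid m(1+s)$, then deducing $|H|\mid m^2e(m-2)$ from $2em^2\equiv m^3e\pmod{|H|}$; this forces $p^{d/2}<|H|<e^5(e^2-2)$, hence $p=5$, $e\in\{3,5,7,9\}$ and $d=2e$, after which a finite gcd computation finishes. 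You would need to supply an argument of this kind for $\ell=2$; as written the proposal does not close the main case.
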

\begin{proof}
Suppose to the contrary that $H_0=\la \rho^k\ra$ is reducible on $V$. This is the case if and only if $\F_p[\gamma^k]=\F_{p^u}$ for some proper divisor $u$ of $d$. Since $H'\ne1$ by Lemma \ref{lem_1dim_Hprop}, we have $\gamma^{k(q_1-1)}\ne 1$. This implies that $\gamma^k$ does not lie in $\F_{q_1}$, i.e., $\F_{p^u}$ is not a subfield of $\F_{q_1}$. In other words, $u$ does not divide $\frac{d}{e}$, since $q_1=p^{d/e}$. By Lemma \ref{lem_Hprop}, we have $|H|=e\cdot |H_0|>q^{1/2}$. Since $|H_0|\le p^u-1$, we deduce that $e(p^u-1)>p^{d/2}$. We have $e\ge 3$ by Lemma \ref{lem_1dim_Hprop} (2).

We claim that $d$ is even and $u=d/2$. If $d$ is odd, then $u\le d/3$ and the only $(p,d)$ pair such that $p\equiv 1\pmod{4}$ and $d(p^{d/3}-1)>p^{d/2}$ is $(5,3)$. In the case $(p,d)=(5,3)$, we deduce that $u=1$ and $e=3$, but it contradicts the fact that $u$ does not divide $\frac{d}{e}$. We thus conclude that $d$ is even, and so $e\le d/2$. If $u\ne d/2$, then $u\le d/3$,  and  $e(p^u-1)>p^{d/2}$ implies that $\frac{d}{2}>p^{d/2-u}\ge p^{d/6}$. The latter inequality holds for no $(p,d)$ pair with $p\equiv 1\pmod{4}$ and $d\ge 6$ even. This proves the claim.

There is an odd integer $m$ such that $1+s^2=mp^d$ by Lemma \ref{lem_Greg_coll}. We have $|H|>1+s$ by Lemma \ref{lem_1dim_Hprop} (1), so $1+s^2<|H|^2$. Since $|H|$ divides $e(p^{d/2}-1)$,  we deduce from $1+s^2<|H|^2$ that $m<e^2$. Since $|H|\cdot|N|$ divides $|G|=(1+s)(1+s^2)$, it follows that $|H|$ divides $m(1+s)$. In particular, we have $m\ge 3$ by the fact $|H|>1+s$.  Taking modulo $|H|$, we have
\[
  2e m^2\equiv e\cdot m^2\cdot (1+s^2)=m^3ep^d\equiv m^3e\pmod{|H|}.
\]
It follows that $|H|$ divides $m^2e(m-2)$. Since $3\le m<e^2$, we deduce that $e^5(e^2-2)>|H|>p^{d/2}$, where the last inequality is from Lemma \ref{lem_Hprop}. Since $d$ is even and $e\ge 3$ is odd, we have $d/2\ge e$. It follows that $e^5(e^2-2)>p^{e}$.   This holds only if $p=5$, and $e$ is one of $3,5,7,9$. In each case, $e^5(e^2-2)>p^{d/2}$ holds only if $d=2e$. It follows that $u=e$. We have $1+s<|H|\le e(p^{e}-1)<p^d$, and there is a unique even integer $s$ such that $1+s^2\equiv 0\pmod{p^d}$, $0\le s\le p^d-1$. For each odd integer $m$ with $3\le m<e^2$, we check that $D:=\gcd(m(1+s),e(p^{e}-1))$ is smaller than $1+s$ in each case, contradicting the facts that $|H|>1+s$ and $|H|$ divides $D$. This completes the proof.
\end{proof}

We shall need the following elementary fact from linear algebra.
\begin{proposition}\label{prop_irred_gord}
If an element $g\in\textup{GL}_m(p)$ is irreducible on $\F_p^m$, then its order divides $p^m-1$ but does not divide $p^a-1$  for all $1\le a\le m-1$.
\end{proposition}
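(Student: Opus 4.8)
The plan is to translate the group-theoretic hypothesis into field theory by analyzing the $\F_p[x]$-module structure of $V=\F_p^m$, where the indeterminate $x$ acts as $g$. First I would observe that irreducibility of $g$ on $V$ means precisely that $V$ is a simple $\F_p[x]$-module. Since $\F_p[x]$ is a principal ideal domain, the structure theorem for finitely generated modules forces $V\cong\F_p[x]/(f(x))$ for a single irreducible polynomial $f$; comparing $\F_p$-dimensions gives $\deg f=m$, so the minimal polynomial of $g$ is irreducible of degree $m$.

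Next I would identify the commutative subalgebra $\F_p[g]\subseteq\End(V)$ with $\F_p[x]/(f(x))\cong\F_{p^m}$, a field of $p^m$ elements in which $g$ is a field-theoretic generator over $\F_p$. Since $g$ is invertible, it is a nonzero element of this field, so its multiplicative order divides $|\F_{p^m}^{\times}|=p^m-1$. This yields the first assertion.

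For the second assertion I would argue by contradiction: suppose $\ord(g)$ divides $p^a-1$ for some $1\le a\le m-1$. Then $g^{p^a-1}=1$, i.e. $g^{p^a}=g$, so $g$ is fixed by the $a$-fold Frobenius $y\mapsto y^{p^a}$ on $\F_{p^m}$. The fixed field of this map is $\F_{p^{\gcd(a,m)}}$, which is a proper subfield since $\gcd(a,m)\le a<m$. Hence $g\in\F_{p^{\gcd(a,m)}}$ and therefore $\F_p[g]\subseteq\F_{p^{\gcd(a,m)}}\subsetneq\F_{p^m}$, contradicting $\F_p[g]=\F_{p^m}$. Equivalently, one may invoke the standard fact that an irreducible polynomial of degree $m$ over $\F_p$ divides $x^{p^a}-x$ exactly when $m\mid a$, which fails for $1\le a\le m-1$.

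There is no serious obstacle here: the statement amounts to saying that an irreducible element of $\GL_m(p)$ generates a Singer-type cyclic group isomorphic to the multiplicative group of $\F_{p^m}$. Every step is standard, and the only point needing a little care is the reduction in the first paragraph, namely that module-theoretic irreducibility is equivalent to the minimal polynomial being irreducible of full degree $m$; this is immediate from the PID structure theorem and the dimension count.
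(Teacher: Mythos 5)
Your proof is correct and follows essentially the same route as the paper: both hinge on the fact that irreducibility forces the characteristic (equivalently, minimal) polynomial of $g$ to be irreducible of degree $m$, and then identify the order of $g$ with that of a generator of $\F_{p^m}$ over $\F_p$. The only cosmetic difference is that you work inside the subfield $\F_p[g]\cong\F_{p^m}$ of $\End(V)$ and finish with a Frobenius fixed-field argument, whereas the paper diagonalizes $g$ over $\F_{p^m}$ and tracks an eigenvalue; both are the standard Singer-type argument.
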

\begin{proof}
Let $f(x)$ be the characteristic polynomial of $g$. By \cite[p.~119]{suprk}, $f(x)$ is irreducible over $\F_p$. It divides $x^{p^m-1}-1$ by \cite[Corollary~3.4]{FF}, which has no repeated root. Therefore, $g$ can be diagonalized over $\F_{p^m}$ which contains all roots of $f(x)$.  The order of $g$ equals that of any root $\alpha$ of $f(x)$. Since $f$ is irreducible of degree $m$, its root $\alpha$ lies in $\F_{p^m}$ and no smaller extension field of $\F_p$. The claim then follows.
\end{proof}

For an element $g$ of a finite group, we use $o(g)$ for its order.
\begin{lemma}\label{lem_1dim_NmaxAbe}
The subgroup $N$ is a maximal abelian normal subgroup of $G$.
\end{lemma}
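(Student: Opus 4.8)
The plan is to argue by contradiction. Suppose that $N$ is not maximal abelian normal, and let $L$ be an abelian normal subgroup of $G$ with $N<L$. By Lemma~\ref{lem_1dim_abelian} the group $L$ is an elementary abelian $p$-group, and by Lemma~\ref{lem_Greg_coll}~(2) its order $p^\ell$ (with $\ell:=\dim_{\F_p}L>d$) divides $1+s^2$. Since $L$ is abelian and contains $N$, we have $L\le C_G(N)$, so $p^\ell=|L|\le |C_G(N)|\le s^2$ by Lemma~\ref{lem_1dim_Hprop}~(1). Note also that, since $N$ is contained in every nontrivial normal subgroup (Lemma~\ref{lem_N_minn}), $N$ is the unique minimal $G$-submodule of $L$, so $L$ is an essential extension of the irreducible module $N$. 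The goal is to show that no such proper essential extension can occur.

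First I would extend the estimates used for $N$ to the whole of $L$. The key observation is that \emph{every} $g\in L\setminus\{1\}$ fixes no line: such a $g$ has order $p$, and if it stabilized a line $\ell$ then, by point-regularity, $p$ would divide the common length $s+1$ of its orbits on the $s+1$ points of $\ell$; but $p\mid 1+s^2$, while $\gcd(1+s,1+s^2)=\gcd(1+s,2)=1$ because $s$ is even, a contradiction. Since $p$ is odd we have $g^2\ne1$, so Theorem~\ref{thm_l2g_upp} and Lemma~\ref{lem_Y_Benson} apply verbatim, giving $|g^G\cap\Delta|<\tfrac{2+\sqrt{2s}}{1+s}\,|g^G|$ for all $g\in L\setminus\{1\}$ exactly as in Lemma~\ref{lem_N_line}. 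Summing over the nontrivial conjugacy classes contained in the normal subgroup $L$ yields the upper bound $|L\cap\Delta|<\tfrac{2+\sqrt{2s}}{1+s}(p^\ell-1)$, and likewise for the sub-sum over $N$.

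Next I would extract arithmetic constraints and play them against this bound. By Proposition~\ref{prop_gGcapDel} each nontrivial class satisfies $|g^G\cap\Delta|\equiv|g^G|\pmod s$, so summing over the classes inside $L$, inside $N$, and inside $L\setminus N$ gives congruences modulo $s$ for $|L\cap\Delta|$, $|N\cap\Delta|$ and $|(L\setminus N)\cap\Delta|$ in terms of the residues of $p^\ell$ and $p^d$ modulo $s$; these residues are pinned down by $p^\ell\mid 1+s^2$ and $p^d\mid 1+s^2$ once one writes $1+s^2$ as $m\,p^\ell$ and $m_0\,p^d$. Combining these congruences with the upper bound of the previous paragraph, with the inequality $s\le d^{-1}p^d$ from Lemma~\ref{lem_1dim_sbound}, and with $p\equiv1\pmod4$, I would run the counting argument of Lemma~\ref{lem_1dim_sbound} with $L$ (resp. $L\setminus N$) in place of $N$: writing $p^\ell-1=a's+b'$ with $0\le b'<s$ and exploiting the identity forced by $p^d\mid 1+s^2$, one reduces to finitely many $(p,d,\ell)$ and checks that none admits an even $s$ with the required divisibility and orbit‑size properties. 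The conclusion is $\ell=d$, i.e. $L=N$, contradicting $N<L$.

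The main obstacle is the last step. For the minimal normal subgroup $N$ we had the strong structural input that $G/C_G(N)\le\GamL_1(p^d)$ with $H_0$ irreducible (Lemmas~\ref{lem_prim_1dim} and~\ref{lem_H0_irred}), which controlled the sizes of all $G$-orbits on $N\setminus\{1\}$ simultaneously. For $L$ the image $G/C_G(L)$ of the conjugation action is \emph{not} known to be semilinear of rank one, so the orbits of $G$ on $L\setminus N$ may have many different sizes and the bound from Theorem~\ref{thm_l2g_upp} is only of order $s^{3/2}$, which does not by itself beat $s$. The delicate point is therefore to combine the mod-$s$ congruences with the exact $p$-power divisibility $p^\ell\mid 1+s^2$ finely enough to force $L=N$; I expect this to require separating the contribution of $N$ from that of $L\setminus N$ and treating the residual finite list of parameter pairs by the same elimination used at the end of Lemma~\ref{lem_1dim_sbound}.
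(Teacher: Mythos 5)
There is a genuine gap, and you have in fact identified its location yourself: the ``last step'' you describe as the main obstacle is the entire content of the lemma, and the route you propose for it does not close. Your plan is to redo the counting of Lemma \ref{lem_1dim_sbound} with $L$ in place of $N$, but that argument depends essentially on knowing that the conjugation action factors through a rank-one semilinear group, which pins every orbit size down to the form $\frac{p^d-1}{k}e'$. For $L$ you have no such control (as you note, $G/C_G(L)$ need not lie in any $\GamL_1$, and $L$ need not even be an irreducible module), so the mod-$s$ congruences from Proposition \ref{prop_gGcapDel} together with the $O(s^{3/2})$ bound from Theorem \ref{thm_l2g_upp} do not determine the orbit sizes on $L\setminus N$, and no finite case analysis is actually set up. Moreover, the contradiction in Lemma \ref{lem_1dim_sbound} lives in the regime $s>d^{-1}p^d$, which is exactly the regime already excluded, so ``running the same argument'' does not produce a contradiction here.

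The paper's proof uses a different and much more direct idea, which needs none of the $\Delta$-counting. Choose $g\in G$ with $H_0=\la\bar g\ra$ and $o(g)$ coprime to $p$. Since $o(g)$ is coprime to $p$, Maschke's theorem decomposes the putative larger abelian normal subgroup $M$ into irreducible $\la g\ra$-submodules, one of which is $N$ (irreducible by Lemma \ref{lem_H0_irred}); let $W$ be another constituent, $|W|=p^m$. By Proposition \ref{prop_irred_gord} the action of $g$ on $W$ has order dividing $p^m-1$, so $g^{p^m-1}$ centralizes $W$. Hence for $v\in W\setminus\{1\}$ the centralizer $C_G(v)$ contains both $M$ (of order at least $p^{d+m}$) and $\la g^{p^m-1}\ra$ (of order at least $o(g)/(p^m-1)\ge d^{-1}(1+s)/(p^m-1)$), giving $|v^G|<dp^{-d}(1+s^2)$. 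Combined with $|v^G|\ge 1+s$ from Proposition \ref{prop_gGcapDel}, this forces $s>d^{-1}p^d$, contradicting Lemma \ref{lem_1dim_sbound}. The key insight you are missing is that elements of a complement of $N$ inside $L$ have abnormally large centralizers, hence abnormally small conjugacy classes; this turns the lower bound on class sizes, rather than the upper bound on $|g^G\cap\Delta|$, into the decisive tool.
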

\begin{proof}
Suppose to the contrary that $M$ is an abelian normal subgroup of $G$ that properly contains $N$. It is elementary abelian by Lemma \ref{lem_1dim_abelian}.  Take an element $g$ of $G$ such that $H_0=\la\bar{g}\ra$, where $H_0$ is as in \eqref{eqn_1dim_H0def}. By replacing $g$ with a proper power $g^i$ if necessary, we assume without loss of generality that all prime divisors of $o(g)$ divide $|H_0|$. In particular, $o(g)$ is relatively prime to $p$.

The action of $\la g\ra$ on $M$ via conjugation is semisimple, and $N$ is an irreducible $\la g\ra$-submodule of $M$ by Lemma \ref{lem_H0_irred}. By Maschke's Theorem \cite[p.~66]{Gor}, we have a decomposition of $M$ as a direct sum of irreducible $\la g\ra$-submodules with $N$ as an constituent. Take $W$ to be a constituent distinct from $N$, and write $m:=\dim_{\F_p}(W)$. Let $g_W$ be the induced action of $g$ on $W$, so that $g_W$ is irreducible on $W$. By Proposition \ref{prop_irred_gord}, $o(g_W)$ divides $p^m-1$. Therefore, $g^{p^m-1}$ acts trivially on $W$, i.e., lies in the centralizer of $W$. Its order is at most $\frac{o(g)}{p^m-1}$. Take a nonidentity element $v$ of $W$. It is centralized by $M$, which has order at least $|N|\cdot|W|=p^{d+m}$. Hence $C_G(v)$ has order at least $p^{d+m}\cdot\frac{o(g)}{p^m-1}$, and so
\[
|v^G|\le \frac{(1+s)(1+s^2)(p^m-1)}{p^{d+m}\cdot o(g)}<\frac{(1+s)(1+s^2)}{p^{d}\cdot d^{-1}(1+s)}=dp^{-d}(1+s^2).
\]
Here we have used the facts $o(g)\ge o(\bar{g})$ and $o(\bar{g})=|H_0|\ge d^{-1}|H|>d^{-1}(1+s)$. Since $|v^G|\ge 1+s$ by Proposition \ref{prop_gGcapDel}, we have $1+s<dp^{-d}(1+s^2)$. It follows that $s>\frac{1+s^2}{1+s}>d^{-1}p^d$, which contradicts Lemma \ref{lem_1dim_sbound}. This completes the proof.
\end{proof}

\begin{lemma}
We have $F(G)=N$.
\end{lemma}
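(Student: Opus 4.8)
The plan is to reduce the statement to showing that $F(G)$ is abelian. Since $F(G)\trianglelefteq G$, its center $Z(F(G))$ is an abelian normal subgroup of $G$; it is nontrivial and hence contains $N$ by Lemma~\ref{lem_N_minn}, so the maximality of $N$ proved in Lemma~\ref{lem_1dim_NmaxAbe} forces $Z(F(G))=N$. As $N\le F(G)$ (a minimal normal, hence nilpotent, subgroup lies in the Fitting subgroup), it suffices to prove that $F(G)=Z(F(G))$, i.e.\ that $F(G)$ is abelian: then $F(G)$ is an abelian normal subgroup containing $N$, which is elementary abelian by Lemma~\ref{lem_1dim_abelian} and therefore equals $N$ by maximality.

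Suppose to the contrary that $F(G)$ is nonabelian. Working in the solvable group $G/N$, choose a minimal normal subgroup of $G/N$ contained in the nontrivial normal subgroup $F(G)/N$; its preimage $L$ satisfies $N<L\le F(G)$ and $L\trianglelefteq G$. Since $F(G)$ is a $p$-group by Lemma~\ref{lem_Greg_coll}, $L/N$ is an elementary abelian $p$-group, and it is $G$-irreducible by minimality. Because $N=Z(F(G))$ is central in $L$ and $[L,L]\le N$, the commutator induces a well-defined alternating $G$-equivariant form $\beta\colon L/N\times L/N\to N$, $\beta(aN,bN)=[a,b]$.

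The radical of $\beta$ is a $G$-submodule of $L/N$, so by $G$-irreducibility it is either $0$ or all of $L/N$. If $\beta\equiv0$, then $L$ is an abelian normal subgroup properly containing $N$, contradicting Lemma~\ref{lem_1dim_NmaxAbe}. Hence $\beta$ is nondegenerate: $L$ is a special $p$-group with $[L,L]=Z(L)=N$, $\dim_{\F_p}(L/N)$ is even, and since $N$ is a nonzero $G$-irreducible quotient of $\bigwedge^2(L/N)$ we obtain $d\le\binom{\dim(L/N)}{2}$. On the other hand $|L|\le|C_G(N)|\le s^2$ by Lemma~\ref{lem_1dim_Hprop}, while $p^{2d}>(2s+3)^2>s^2$ by Lemma~\ref{lem_Hprop}, so $\dim(L/N)<d$.

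It remains to rule out this special case, which I expect to be the main obstacle. The strategy is to rerun the centralizer argument of Lemma~\ref{lem_1dim_NmaxAbe}: taking $g\in G$ with $\langle\bar g\rangle=H_0$ acting irreducibly, hence (as $d\ge3$) fixed-point-freely, on $N$, I would locate a $\langle g\rangle$-invariant \emph{abelian} subgroup $A$ with $N<A\le L$, use coprime action to lift a $g^{p^{m}-1}$-fixed vector of an $m$-dimensional $\langle g\rangle$-constituent of $A/N$ (with $m<d$, so that $g_N^{p^{m}-1}\ne1$ and the lift can be corrected to a genuine fixed point since $1-g_N^{p^m-1}$ is invertible on $N$) to an element $x\in A\setminus N$, and conclude $|C_G(x)|\ge|A|\cdot o(g)/(p^{m}-1)\ge p^{d+m}o(g)/(p^{m}-1)$. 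Combined with $o(g)\ge|H_0|>d^{-1}(1+s)$ and $s\le d^{-1}p^d$ (Lemma~\ref{lem_1dim_sbound}) this forces $|x^G|<\frac{d(1+s^2)}{p^d}\le\frac{1+s^2}{s}<1+s$, contradicting Proposition~\ref{prop_gGcapDel}. The difficulty is producing such an abelian $A$: because $\beta$ is nondegenerate there is no $G$-invariant abelian subgroup strictly between $N$ and $L$, so one must instead find a $\langle g\rangle$-invariant totally isotropic subspace of $L/N$ and take its preimage. Over $\overline{\F}_p$ the form $\beta$ forces each eigenvalue $\lambda$ of $g$ on $L/N$ to pair with some $\lambda'$ with $\lambda\lambda'\in\mathrm{eig}(N)$, which yields an invariant isotropic subspace by Galois descent unless every constituent is self-paired; the residual "anisotropic" configurations must then be eliminated using the arithmetic relations among $p,d,e,s$ and the orbit bound $|H|>1+s$, exactly in the spirit of the exceptional-case analyses in Lemmas~\ref{lem_prim_1dim} and~\ref{lem_H0_irred}.
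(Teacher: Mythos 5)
Your reduction and setup coincide with the paper's: you reduce to $F(G)$ nonabelian, take a minimal $G$-invariant elementary abelian section $L/N$ above $N$ inside $F(G)$, form the $G$-equivariant alternating commutator map into $N$, and observe that degeneracy would contradict Lemma~\ref{lem_1dim_NmaxAbe} while surjectivity of the span follows from minimality of $N$. Up to that point the argument is sound (one small slip: a nondegenerate alternating form with values in the $d$-dimensional space $N$ does \emph{not} force $\dim(L/N)$ to be even, though nothing you do later depends on this). The problem is that the proof stops exactly where the real work begins. You write that ``the residual anisotropic configurations must then be eliminated using the arithmetic relations among $p,d,e,s$,'' but you neither produce the $\langle g\rangle$-invariant totally isotropic subspace in general nor carry out any of the arithmetic; and your own observation that nondegeneracy of $\beta$ rules out $G$-invariant abelian subgroups between $N$ and $L$ shows that the centralizer argument of Lemma~\ref{lem_1dim_NmaxAbe} cannot simply be rerun. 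As it stands, the nondegenerate case --- which is the only case that needs an argument --- is not excluded.

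The paper closes this gap without ever looking for isotropic subspaces. Decompose $L/N=L_1\oplus\cdots\oplus L_t$ into irreducible $\langle g\rangle$-submodules with $|L_i|=p^{m_i}$ and $\sum m_i=m<d$, and consider the submodules $N_{ij}$ of $N$ generated by $\psi(L_i,L_j)$; each is $1$ or $N$ by irreducibility of $N$. If $N_{ii}=N$, the action of $\bar g$ on $N$ is determined by its action on $L_i$, so $o(\bar g)\mid p^{m_i}-1$ with $m_i<d$, contradicting Proposition~\ref{prop_irred_gord}; hence some $N_{ij}=N$ with $i\neq j$, which forces $o(\bar g)\mid\lcm(p^{m_1}-1,p^{m_2}-1)$. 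Combining $|H|>1+s$ with $|M|\le|C_G(N)|\le s^2$ and $s\le d^{-1}p^d$ then yields $(p^{\gcd(m_1,m_2)}-1)^2<1$, a clean contradiction with no case analysis. This diagonal-versus-off-diagonal dichotomy is the missing idea: it converts nondegeneracy of $\psi$ directly into a divisibility constraint on $o(\bar g)$, which is what your eigenvalue-pairing sketch was groping toward but never pins down.
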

\begin{proof}
We write $F:=F(G)$, and suppose to the contrary that $N\ne F$. By Lemmas \ref{lem_N_minn} and \ref{lem_1dim_NmaxAbe}, we deduce that $N$ is the center of $F$. Therefore, $F$ is a nonabelian $p$-group by the assumption $N\ne F$, and $F$ is a subgroup of $C_G(N)$. Let $L$ be a minimal nontrivial $G$-invariant subgroup of the center of $F/N$, which must be elementary abelian. Let $M$ be the full preimage of $L$ in $F$. It is clear that $M'\le [F,M]\le N$ by the choice of $L$. The group $M$ is nonabelian by Lemma \ref{lem_1dim_NmaxAbe}, so $M'=N$ by Lemma \ref{lem_N_minn}. Since $N$ centralizes $M$, there is a well-defined map
\begin{equation}\label{eqn_psiDef}
\psi:\,L\times L\rightarrow N,\quad (\bar{x},\bar{y})\mapsto [x,y],
\end{equation}
where $x,y\in M$ and $\bar{x},\bar{y}$ are their images in $L$ respectively. It commutes with the action of $g$, i.e., $\psi(\bar{x}^g,\bar{y}^g)=\psi(\bar{x},\bar{y})^g$. The subgroup of $N$ spanned by the image set of $\psi$ is $M'=N$.

Suppose that $|L|=p^m$. We claim that $d^2p^m\le p^d$, and $m<d$. We have $|N|\cdot |L|\le |C_G(N)|\le s^2$ by Lemma \ref{lem_1dim_Hprop} (1). Since $s\le d^{-1}p^d$ by Lemma \ref{lem_1dim_sbound},  we deduce that $p^{m+d}\le d^{-2}p^{2d}$, i.e., $d^2p^m\le p^d$. The second claim then follows from the fact $d>1$.

Take an element $g$ of $G$ such that $H_0=\la\bar{g}\ra$ and all prime divisors of $o(g)$ divide $|H_0|$ as in the proof of Lemma \ref{lem_1dim_NmaxAbe}. The element $\bar{g}$ is irreducible on $N$ by Lemma \ref{lem_H0_irred}. There is an induced action of $g$ on $L$ via conjugation, which makes $L$ into a $\la g\ra$-module. By Maschke's Theorem, there is a decomposition $L=L_1\oplus\cdots\oplus L_t$ of $L$ into the direct sum of irreducible $\la g\ra$-submodules. For $1\le i\le t$, we write $|L_i|=p^{m_i}$ and use $g_i$ for the induced action of $g$ on $L_i$. By Proposition \ref{prop_irred_gord}, $o(g_i)$ divides $p^{m_i}-1$ for each $i$. We have $m=\sum_{i=1}^tm_i<d$.

For each $(i,j)$ pair, $N_{ij}:=\la\psi(a,b):\,a\in L_i, b\in L_j\ra$ is a $\la g\ra$-invariant submodule of $N$. Since $N$ is irreducible, it is either $1$ or $N$. If $N_{ii}=N$ for some $i$, then the action of $g$ on $N$ is induced from its action on $L_i$. It follows that $o(\bar{g})$ divides $o(g_i)$. This implies that $o(\bar{g})$ divides $p^{m_i}-1$, which contradicts Proposition \ref{prop_irred_gord}. Therefore, $N_{ii}=1$ for each $i$, and so $t\ge 2$ and there is a pair of distinct $i,j$ such that $N_{ij}=N$. With proper relabeling, we assume without loss of generality that $N_{12}=N$.

The action of $g$ on $N$ is induced from its action on $L_1$ and $L_2$, so $o(\bar{g})$ divides the least common multiple of $o(g_1),\,o(g_2)$. In particular, it divides $D:=\lcm(p^{m_1}-1,p^{m_2}-1)=\frac{(p^{m_1}-1)(p^{m_2}-1)}{p^r-1}$, where $r=\gcd(m_1,m_2)$.
Since $|H|>1+s$, $|H|=|H_0|\cdot e$ and $H_0=\la o(\bar{g})\ra$, it follows that $1+s< dD$. The subgroup $F$ is in $C_G(N)$ and contains $M$, so $|M|\le |C_G(N)|\le s^2$ by Lemma \ref{lem_1dim_Hprop} (1). It follows that $p^{m_1+m_2+d}\le s^2<d^2D^2$, which implies that $p^d(p^r-1)^2<d^2p^{m_1+m_2}$. Since $d^2p^m\le p^d$ and $m_1+m_2\le m$, it leads to the contradiction $(p^r-1)^2<1$. This completes the proof.
\end{proof}

\begin{lemma}\label{lem_NeqK}
We have $C_G(N)=N$.
\end{lemma}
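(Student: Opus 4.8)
The plan is to exploit the fact, established in the previous lemma, that $F(G)=N$, together with the solvability of $G$. The governing classical principle is that in a solvable group the Fitting subgroup contains its own centralizer; since here $N$ is abelian and equals $F(G)$, this forces $C_G(N)=N$. I would carry this out by a short direct nilpotency argument rather than by citing the general theorem, because the special features available here (namely $N$ abelian and $F(G)=N$) make the argument self-contained.

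Write $C:=C_G(N)$. Since $N$ is abelian we have $N\le C$, and $C$ is normal in $G$, being the centralizer of a normal subgroup. Suppose, for contradiction, that $C>N$. Then $C/N$ is a nontrivial normal subgroup of the solvable group $G/N$, so I may choose a subgroup $V$ with $N\le V\le C$ such that $V/N$ is a minimal normal subgroup of $G/N$. In a solvable group a minimal normal subgroup is elementary abelian, so $V/N$ is an elementary abelian $q$-group for some prime $q$; moreover $V$ is normal in $G$ and properly contains $N$.

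The key observation is that $V$ is nilpotent. Indeed, $V\le C=C_G(N)$ gives $[V,N]=1$, so $N\le Z(V)$, while $V/N$ being abelian gives $[V,V]\le N$. Combining these yields $[[V,V],V]\le[N,V]=1$, so the lower central series of $V$ terminates at the third term and $V$ is nilpotent of class at most $2$. But a nilpotent normal subgroup of $G$ is contained in the Fitting subgroup, so $V\le F(G)=N$, contradicting $V>N$. Hence $C=N$, as claimed.

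There is essentially no obstacle once $F(G)=N$ is in hand: the entire content is the remark that any normal subgroup squeezed between $N$ and $C_G(N)$ whose quotient over $N$ is abelian must be nilpotent, and is therefore trapped inside $F(G)=N$. I would emphasize that the substantive work lies in the preceding determination of the Fitting subgroup, on which this lemma merely builds; after this, one has $G/N=G/C_G(N)=H\le\GamL_1(p^d)$, and the numerical constraints $|G|=(1+s)(1+s^2)=p^d|H|$ with $|H|\mid d(p^d-1)$ can be pushed against the earlier bound $s\le d^{-1}p^d$ to close the argument.
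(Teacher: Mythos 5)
Your proof is correct, and it takes a genuinely different route from the paper's. You invoke (and then reprove in this special case) the classical fact that in a finite solvable group the centralizer of the Fitting subgroup lies inside it: any normal subgroup $V$ with $N\le V\le C_G(N)$ and $V/N$ abelian satisfies $[V,V]\le N\le Z(V)$, hence is nilpotent of class at most $2$ and therefore trapped in $F(G)=N$, contradicting $V>N$. The paper instead descends the derived series of $C_G(N)/N$ to its last nontrivial term, picks a minimal $G$-invariant elementary abelian $r$-subgroup $L$ there with preimage $M$, uses $F(G)=N$ only to force $r\ne p$, uses Lemma~\ref{lem_1dim_NmaxAbe} to make $M$ nonabelian with $M'=N$ by minimality of $N$, and then kills $M'$ via the identity $[x,y^r]=[x,y]^r$ together with $\gcd(r,p)=1$. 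Your argument is shorter and more standard, needs neither the maximality of $N$ among abelian normal subgroups nor the commutator-order trick, and relies on exactly the same prerequisite ($F(G)=N$) as the paper; what the paper's version buys is essentially nothing beyond self-containedness at the level of elementary commutator calculus. Your closing remarks about the numerical constraints concern the use of the lemma later in the paper rather than its proof and could be dropped.
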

\begin{proof}
We write $K:=C_G(N)$, and suppose that $K\ne N$. Let $n$ be the derived length of $K/N$, and define $D_0(K/N)=K/N$ and $D_{i+1}(K/N)=D_i(K/N)'$ for $i\ge 0$. Let $L$ be a nontrivial $G$-invariant subgroup of $K/N$ of smallest order contained in $D_{n-1}(K/N)$, and let $M$ be its full preimage in $K$. Then $L$ is an elementary abelian $r$-group for some prime $r$. We have $r\ne p$, since otherwise $M$ would be a normal $p$-subgroup of $G$ and thus contained in $F(G)=N$.  The group $M$ is nonabelian by Lemma \ref{lem_1dim_NmaxAbe}. Since $L$ is abelian, $M'\le N$. It follows that $M'=N$ by the minimality of $N$. For $x,y\in M$, we have $y^r\in N$, $x,y\in C_G(N)$, so $1=[x,y^r]=[x,y]^r$ by the commutator formula $[x,yz]=[x,z][x,y]^z$, cf.  \cite[p.~18]{Gor}. Since $[x,y]$ is in the elementary abelian $p$-group $N$ and $r\ne p$, we deduce that $[x,y]=1$ for $x,y\in M$. This contradicts the fact $M'=N$ and completes the proof.
\end{proof}

\begin{lemma}\label{lem_1dim_Gdd}
We have $G''=N$.
\end{lemma}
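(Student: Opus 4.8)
The plan is to sandwich $G''$ between $N$ and $N$, i.e. to prove both $G''\le N$ and $N\le G''$. For the second containment I would invoke Lemma \ref{lem_N_minn}: since $G''$ is characteristic in $G$ and hence normal, as soon as I know $G''\neq 1$ it must contain the unique minimal normal subgroup $N$. Thus the whole argument reduces to two things: showing $G''\le N$, and showing that $G'$ is nonabelian.

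First I would record that $N\le G'$. The intersection $N\cap G'$ is normal in $G$ and contained in $N$, so by minimality of $N$ it is either $N$ or trivial. If it were trivial, then $[N,G]\le N\cap G'=1$ would force $N\le Z(G)$, i.e. $C_G(N)=G$; this contradicts $C_G(N)=N$ from Lemma \ref{lem_NeqK} together with $G\neq N$ (as $H=G/N$ is nontrivial). Hence $N\le G'$. Consequently $G'/N=(G/N)'=H'$, and since $H'\le H_0=\la\rho^k\ra$ is cyclic, the quotient $G'/N$ is abelian. This gives $(G'/N)'=G''N/N=1$, that is, $G''\le N$.

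The remaining, and genuinely essential, point is that $G'$ is nonabelian, which is where I expect the real content to sit. Suppose to the contrary that $G'$ is abelian. Then $G'$ is an abelian normal subgroup of $G$ containing $N$, so by Lemma \ref{lem_1dim_NmaxAbe}, which asserts that $N$ is a maximal abelian normal subgroup of $G$, we would be forced to conclude $G'=N$. But then $G/G'=G/N=H$ would be abelian, contradicting Lemma \ref{lem_1dim_Hprop}(2), which states that $H$ is nonabelian. Hence $G'$ is nonabelian and $G''=[G',G']\neq 1$.

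Putting the pieces together: $G''\neq 1$ and $G''$ normal in $G$ give $N\le G''$ by Lemma \ref{lem_N_minn}, while $G''\le N$ was established above; therefore $G''=N$. The main obstacle is really the nonabelianness of $G'$, but thanks to the earlier structural lemmas---the maximality of $N$ among abelian normal subgroups and the nonabelianness of $H=G/N$---this reduces to a short contradiction, and the rest is a clean sandwiching argument.
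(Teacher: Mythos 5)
Your proposal is correct and follows essentially the same route as the paper: both obtain $G''\le N$ from $H=G/N$ being metabelian (since $H'\le H_0$ is abelian, equivalently $H\le\GamL_1(p^d)$), and both obtain $N\le G''$ by showing $G'$ is nonabelian via the maximality of $N$ among abelian normal subgroups together with the nonabelianness of $H$, then invoking Lemma \ref{lem_N_minn}. The only cosmetic difference is the order in which the contradiction with $H$ nonabelian is extracted.
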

\begin{proof}
The group $H=G/C_G(N)$ is nonabelian by Lemma \ref{lem_Hprop} (2), so $G'$ is not contained in $C_G(N)=N$. By Lemma \ref{lem_N_minn}, $N<G'$. The group $G'$ is nonabelian by Lemma \ref{lem_1dim_NmaxAbe}, so $G''>1$. We have $N\le G''$ by Lemma \ref{lem_N_minn}. Since $H$ is a subgroup of $\GamL_1(p^d)$, we deduce that $H''=1$, i.e., $G''\le N$. The claim $G''=N$ then follows.
\end{proof}

\begin{lemma}\label{lem_1dim_GGd}
$[G:\,G']$ divides $1+s$.
\end{lemma}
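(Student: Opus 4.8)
The plan is to argue by contradiction, using Lemma \ref{lem_Gbarorder_seqt} as the entry point: it already confines $[G:G']$ to a divisor of either $1+s$ or $1+s^2$, and since $1+s$ and $1+s^2$ are coprime, it suffices to rule out the possibility that $[G:G']$ divides $1+s^2$. The first move is to pass to the quotient $H=G/N$. Because $N\le G'$ (indeed $N=G''$ by Lemma \ref{lem_1dim_Gdd}), we have $H'=G'/N$ and hence $[G:G']=[H:H']$, a divisor of $|H|=|G|/|N|=(1+s)m$, where I write $1+s^2=mp^d$. Here $m$ is odd (both $1+s^2$ and $p^d$ are odd) and $\gcd(1+s,m)=1$, since $\gcd(1+s,1+s^2)=1$.

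First I would combine the two divisibility facts. Under the contrary assumption $[G:G']\mid 1+s^2=mp^d$, together with $[G:G']\mid (1+s)m$, the index divides $\gcd((1+s)m,\,mp^d)=m\gcd(1+s,p^d)=m$ (using $\gcd(1+s,p)=1$). So the assumption forces $[G:G']\mid m$; writing $m=[G:G']\cdot m'$ gives $|H'|=|H|/[G:G']=(1+s)m'$, whence $1+s$ divides $|H'|$.

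The key structural input is that $H'=\langle\rho^{k(q_1-1)}\rangle$ is a cyclic subgroup of $H_0\le\GL_1(p^d)\cong C_{p^d-1}$, so that $|H'|$ divides $p^d-1$. Combining this with the previous step yields $1+s\mid p^d-1$, i.e.\ $p^d\equiv 1\pmod{1+s}$. Reducing the identity $1+s^2=mp^d$ modulo $1+s$ and using $s\equiv-1$, hence $s^2\equiv 1$, I then obtain $m\equiv 2\pmod{1+s}$. Since $m$ is odd while $1+s\ge 5$ is odd, the smallest admissible value of $m$ is $s+3$, so $m\ge s+3$.

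Finally I would contradict this with a size bound. By Lemma \ref{lem_Hprop} we have $p^d=|N|\ge 2s+3$, so $m=(1+s^2)/p^d\le (1+s^2)/(2s+3)<s+3$, because $(s+3)(2s+3)>1+s^2$ for all $s\ge 1$. This contradicts $m\ge s+3$ and completes the argument. The step I expect to be the main obstacle is the middle one: the point is to recognize that the seemingly harmless hypothesis ``$[G:G']$ divides $1+s^2$'' propagates, via the cyclicity of $H'$, into the rigid congruence $1+s\mid p^d-1$. Once that congruence is available, the residue computation $m\equiv 2\pmod{1+s}$ together with the elementary bound $p^d\ge 2s+3$ forces the contradiction immediately.
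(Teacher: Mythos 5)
Your proof is correct, and its first half is exactly the paper's: under the contrary assumption that $[G:G']$ divides $1+s^2$, both arguments extract from $|G|=[G:G']\cdot|H'|\cdot p^d=(1+s)(1+s^2)$ the divisibility $1+s\mid |H'|$ (your route via $\gcd((1+s)m,mp^d)=m$ is a slightly more explicit way of saying that $[G:G']\cdot p^d$ is coprime to $1+s$), and combine this with $|H'|\mid p^d-1$ to conclude $1+s\mid p^d-1$. The endgames genuinely differ. The paper writes $p^d-1=\alpha(1+s)$ and works modulo $p^d$: from $\alpha^2(1+s^2)\equiv 2\alpha^2+2\alpha+1\equiv 0\pmod{p^d}$ it gets $2\alpha^2+2\alpha+1\ge p^d$, hence $p^d\ge\frac{s^2+1}{2}$, forcing $p^d=s^2+1$ and then the non-integrality of $\alpha=\frac{s^2}{s+1}$. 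You instead reduce $1+s^2=mp^d$ modulo $1+s$ to get $m\equiv 2\pmod{1+s}$, observe that $m$ is odd while $2$ and $2+2(1+s)$ are even, so $m\ge s+3$, and contradict this with $m\le\frac{1+s^2}{2s+3}<s+3$ using the bound $|N|\ge 2s+3$ from Lemma \ref{lem_Hprop}. Your version avoids the squaring and the square-root estimate and is arguably cleaner, at the cost of importing the extra input $|N|\ge 2s+3$, which the paper's endgame does not use; all of your individual steps (the coprimality of $1+s$ and $1+s^2$, the parity argument, and the inequality $(s+3)(2s+3)>1+s^2$) check out.
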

\begin{proof}
By Lemma \ref{lem_Gbarorder_seqt}, $[G:\,G']$ divides $1+s$ or $1+s^2$. Suppose to the contrary that  $[G:\,G']$ divides  $1+s^2$. Since $G''=N$ by Lemma \ref{lem_1dim_Gdd}, we have $|G|=[G:\,G']\cdot [G':\,N]\cdot p^d=(1+s)(1+s^2)$. Since $p^d$ divides $1+s^2$ by Lemma \ref{lem_Greg_coll}, we deduce that $1+s$ divides $|H'|=[G':\,N]$. Also, $|H'|$ divides $p^d-1$ by Lemma \ref{lem_1dim_Hprop} (3). Write $p^d-1=\alpha(1+s)$ for some integer $\alpha$. We have
\begin{align*}
0\equiv \alpha^2(1+s^2)&=\alpha^2+(p^d-1-\alpha)^2\equiv 2\alpha^2+2\alpha+1\pmod{p^d}.
\end{align*}
In particular,  $2\alpha^2+2\alpha+1\ge p^d$. It follows that $\alpha\ge\frac{-1+\sqrt{2p^d-1}}{2}$, and so $s=\frac{p^d-1}{\alpha}-1\le\sqrt{2p^d-1}$.
It is equivalent to $p^d\ge\frac{s^2+1}{2}$. Since $s^2+1$ is odd and $p^d$ divides $s^2+1$, we deduce that $p^d=s^2+1$. Then $\alpha=\frac{p^d-1}{s+1}=\frac{s^2}{s+1}$ which is not an integer: a contradiction. This completes the proof.
\end{proof}

We are now ready to complete the proof of Theorem \ref{thm_regGQ}. Recall that $|G|=(1+s)(1+s^2)$, $|N|=q=q_1^e$, $H=G/C_G(N)$, $H=\la\rho^k,\rho^{s_0}\sigma\ra$, $H'=\la\rho^{k(q_1-1)}\ra$, where $e$ is an odd integer. Set $\beta:=\frac{q_1^e-1}{q_1-1}$.  By Lemma \ref{lem_Hprop}, $e>1$, and $|H'|$ divides $\beta$. By Lemmas \ref{lem_NeqK} and \ref{lem_1dim_Gdd}, $C_G(N)=G''=N$. It follows that $H'=G'/N$ and $[G:\,G']=[H:\,H']$, $|H'|=[G':\,N]$. Set $u:=[G:\,G']$ and $u_0:=\frac{1}{e}u$. Then $u_0$ is an odd integer that divides $\frac{1}{4}(q_1-1)$ by Lemma \ref{lem_1dim_Hprop} (3).  By Lemmas \ref{lem_Greg_coll} and \ref{lem_1dim_GGd}, $p^d$ divides $1+s^2$, and $u$ divides $1+s$.  Since $|G|=u\cdot |H'|\cdot q=(1+s)(1+s^2)$, there are odd divisors $n_1,n_2$ of $|H'|$ such that
\[
  1+s=n_1u,\quad 1+s^2=qn_2, \quad n_1n_2=|H'|,\quad \beta\equiv 0\pmod{n_1n_2}.
\]
We deduce from the first two equations that $1+(en_1u_0-1)^2=qn_2$. Taking modulo $n_1u_0$, we deduce that $n_2-2\equiv 0\pmod{n_1u_0}$. Here we have used the fact that $n_1u_0$ divides $q-1$. Since both $n_1u_0$ and $n_2$ are odd, it follows that  $n_2\ge n_1u_0+2$.  Therefore, $1+(en_1u_0-1)^2\ge q(n_1u_0+2)$. In particular, we have $e^2n_1^2u_0^2>qu_0n_1$, i.e., $n_1>r$ with $r=\frac{q}{e^2u_0}$. We deduce from $n_1n_2\le\beta$ that $n_1^2u_0+2n_1\le\beta$, so $r^2u_0<\beta$. On the other hand, since $u_0$ divides $\frac{q_1-1}{4}$, we have
\begin{align*}
r^2u_0-\beta&=\frac{q^2}{e^4u_0}-\frac{q-1}{q_1-1} >\frac{4q^2}{e^4(q_1-1)}-\frac{q}{q_1-1}.
\end{align*}
Since $4q_1^e>e^4$ for all $(q_1,e)$ pairs with $q_1\equiv 1\pmod{4}$ and  $e\ge 3$ odd, the last term is always positive. This contradiction completes the proof of Theorem \ref{thm_regGQ}.\\

\noindent\textbf{Acknowledgements}. This work was supported by National Natural Science Foundation of China under Grant No. 12225110, 12171428 and the Sino-German Mobility Programme M-0157. The author thanks the reviewers for detailed comments and suggestions that helped to improve the presentation of this paper. The author thanks Bill Kantor for pointing out the reference \cite{suprk} which considerably simplified the original proof of Lemma \ref{lem_prim_1dim}.

\end{document}